\documentclass[11pt]{article}
\usepackage[margin=1in]{geometry}
\usepackage{amsmath,amssymb,amsthm,mathtools}
\usepackage{mathrsfs}
\usepackage{booktabs,array}
\usepackage{microtype}
\usepackage[hidelinks]{hyperref}
\usepackage[T1]{fontenc}
\usepackage{lmodern}
\allowdisplaybreaks

\newtheorem{theorem}{Theorem}[section]
\newtheorem{lemma}[theorem]{Lemma}
\newtheorem{proposition}[theorem]{Proposition}
\newtheorem{corollary}[theorem]{Corollary}
\theoremstyle{definition}
\newtheorem{definition}[theorem]{Definition}

\newtheorem{example}[theorem]{Example}

\newcommand{\Energy}{\mathcal E}

\title{Collision Positivity for Three-Variable Symmetric Monomial Inequalities:\\A Complete Three-Variable Criterion}
\author{Jian Sun \\jiansun@uchicago.edu}
\date{\today}

\begin{document}
\maketitle

\begin{abstract}
Let \(\lambda\succ\gamma\succ\mu\) be equal-degree exponent partitions with at most three parts, and let
\begin{equation*}
P_{\lambda,\gamma,\mu}
=
J_\lambda+J_\mu-2J_\gamma,
\end{equation*}
where \(J_\nu\) denotes the symmetric monomial orbit sum associated with \(\nu\).  We prove a necessary and sufficient collision criterion for the positivity of this three-point majorization difference in three variables.  For nonnegative integer exponent partitions,
\begin{equation*}
P_{\lambda,\gamma,\mu}(x,y,z)\ge0
\qquad(x,y,z>0)
\end{equation*}
if and only if
\begin{equation*}
P_{\lambda,\gamma,\mu}(t,1,1)\ge0
\qquad(t>0).
\end{equation*}
Thus the positivity of a genuinely three-variable symmetric polynomial of this form is completely determined by its one-variable restriction to the locus where two variables coincide.

The theorem gives a uniform and effectively checkable criterion for an infinite class of symmetric polynomial inequalities.  For a fixed integer chain, the global three-variable problem is reduced to a single univariate polynomial inequality, which can often be verified exactly by factorization, Sturm's theorem, or other one-variable methods.  The criterion extends well beyond the classical Schur family, produces explicit inequalities outside the Schur pattern, and yields genuine strengthenings and refinements of Schur's inequality.

The proof is organized by the geometry of a three-part dominance chain.  After a common translation, exact finite-difference identities for the collision restriction give a finite descent to minimal collision-positive states.  These states are divided according to the relative positions of two finite supports into crossing, seam, contact, and contained configurations.  Crossing and seam configurations are controlled by monotonicity of the associated support ratios together with ordered-minor and total-positivity arguments.  The boundary seam in which one of the two finite supports degenerates is handled by a direct three-channel decomposition and an elementary multiplicative-kernel inequality.  The remaining contact and contained configurations are closed by reciprocal complement and a maximal-depth minimal-counterexample argument; in particular, the proof does not assume that reciprocal complement preserves minimality.

We also give the exact two-variable criterion, extend the theorem to rational exponent vectors by denominator clearing, exhibit explicit low-degree inequalities beyond Schur, establish a rigidity property of the classical Schur chain, and derive a strict refinement of the cubic Schur inequality through a midpoint incomparable with the classical one.  We have also investigated analogous questions for \(n\ge4\) variables; the higher-dimensional results require a different development and will be presented separately.

\textbf{Keywords:} majorization; Muirhead inequality; Schur inequality; symmetric polynomial inequality; collision positivity; total positivity; reciprocal symmetry.
\end{abstract}

\section{Introduction}

Muirhead's inequality compares symmetric monomial orbit sums whose exponent vectors are ordered by majorization.  Thus an equal-degree chain
\begin{equation*}
\lambda\succ\gamma\succ\mu
\end{equation*}
gives two nonnegative differences,
\begin{equation*}
J_\lambda-J_\gamma\ge0,
\qquad
J_\gamma-J_\mu\ge0.
\end{equation*}
The question studied in this paper is whether the first difference is at least as large as the second:
\begin{equation*}
J_\lambda-J_\gamma
\ge
J_\gamma-J_\mu.
\end{equation*}
Equivalently, we ask when
\begin{equation*}
P_{\lambda,\gamma,\mu}
=
J_\lambda+J_\mu-2J_\gamma
\end{equation*}
is nonnegative.

Schur's inequality is the classical example of such a three-orbit second difference, but it corresponds to a very special pattern of exponent vectors.  Section~2 makes this distinction explicit and gives a degree-nine example outside the Schur family.  It also recalls Timofte's half-degree principle for general symmetric polynomials.  In three variables that general principle ceases to reduce dimension once the degree is at least six.  The collision theorem proved here gives a much stronger reduction for this special three-orbit defect: positivity on the two-equal-variable collision locus is sufficient in every degree.

For a symmetric polynomial, the \emph{collision locus} is the set on which two variables coincide.  In three variables it is the union of the three equality sets \(x=y\), \(y=z\), and \(z=x\).  By symmetry and homogeneity it is enough to use the normalized restriction
\begin{equation*}
P_{\lambda,\gamma,\mu}(t,1,1),
\qquad t>0.
\end{equation*}
Our main theorem shows that nonnegativity of this one-variable collision restriction is not only necessary but also sufficient for nonnegativity on the whole positive three-variable cone.  Sections~\ref{sec:global-roadmap} and \ref{sec:one-sided-completion} record the sector decomposition and the final reciprocal completion.  We have also investigated the corresponding questions for \(n\ge4\) variables.  Since the higher-dimensional arguments have a substantially different structure, those results will be presented in a separate paper.

The proof is organized by the canonical geometry of a three-part dominance chain. After a common translation we write
\begin{equation*}
\lambda=(p,q,0),
\end{equation*}
and parameterize the moving middle partition by two lattice variables. The collision quotient has exact negative mixed second difference on this lattice, while the lower endpoint direction is separable. This yields a finite reduction to minimal collision-positive states. The resulting minimal states are divided by a single support parameter into crossing, seam, contact, and contained geometries.

The proof architecture separates four global regions.  In the preferred exterior orientation, crossing and the ordinary seam are controlled by monotonicity of two finite-support ratios and an ordered-minor uncrossing argument.  The double-exterior seam with \(V\in\{0,1\}\) is handled by a three-channel multiplicative-kernel argument.  Every contact and contained state, including the boundary slices, is then completed by a reciprocal maximal-depth argument with bounded transfer depth \(D=a+c\).

Sections~5--15 give this architecture in logical order.  A compact ordered-minor total-positivity lemma supplies the crossing argument, while the boundary seam is handled directly.  Section~\ref{sec:one-sided-completion} closes the remaining negative-\(r\) cases by the reciprocal maximal-depth argument.  An additional direct calculation is retained only as an independent verification and is not a logical dependency of the theorem.

\section{Classical Background}

This section recalls the classical tools needed later.  We begin with majorization and Muirhead's inequality, then review Schur's inequality and compare the present problem with the general half-degree principle.

\subsection{Majorization and Muirhead's inequality}

Majorization is the natural order behind symmetric monomial inequalities.  Let
\begin{equation*}
a=(a_1,\ldots,a_n),
\qquad
b=(b_1,\ldots,b_n)
\end{equation*}
be vectors with the same total sum, arranged in decreasing order.  We say that \(a\) majorizes \(b\), and write \(a\succcurlyeq b\), if
\begin{equation*}
\sum_{i=1}^k a_i\ge\sum_{i=1}^k b_i
\qquad(1\le k<n).
\end{equation*}
When the two vectors are different, we write \(a\succ b\).

For a nonnegative exponent partition \(\eta=(\eta_1,\ldots,\eta_n)\), define the labeled symmetric orbit sum by
\begin{equation*}
J_\eta(x_1,\ldots,x_n)
=\sum_{\sigma\in S_n}\prod_{i=1}^n x_i^{\eta_{\sigma(i)}}.
\end{equation*}
All \(n!\) labeled permutations are included, so repeated exponent values occur with their natural multiplicities.

\begin{theorem}[Muirhead's inequality]\label{thm:muirhead}
Let \(\eta\) and \(\xi\) be nonnegative exponent partitions of the same total degree.  If
\begin{equation*}
\eta\succcurlyeq\xi,
\end{equation*}
then
\begin{equation*}
J_\eta(x_1,\ldots,x_n)
\ge
J_\xi(x_1,\ldots,x_n)
\end{equation*}
for all \(x_1,\ldots,x_n\ge0\).
\end{theorem}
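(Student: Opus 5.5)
The plan is the classical argument: reduce majorization to a chain of elementary transfers, then prove the inequality for a single transfer by pairing terms of the orbit sum. For partitions of the same total, the standard lemma of Hardy--Littlewood--P\'olya (Muirhead's lemma) says that if \(\eta\succcurlyeq\xi\), then \(\xi\) is reached from \(\eta\) by finitely many \emph{Robin Hood transfers}. Each transfer replaces two coordinates \(\eta_i>\eta_j\) (with \(i<j\)) by \(\eta_i-\delta,\ \eta_j+\delta\), where \(0<\delta\le(\eta_i-\eta_j)/2\), and each intermediate vector still majorizes \(\xi\).

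I will prove this lemma first, by induction on the number of coordinates in which the sorted vectors differ. Take the smallest index \(i\) with \(\eta_i>\xi_i\), and the smallest \(j>i\) with \(\eta_j<\xi_j\); both exist by equal sums and \(\eta\ne\xi\). Move
\[
\delta=\min\bigl(\eta_i-\xi_i,\ \xi_j-\eta_j\bigr)
\]
units from \(i\) to \(j\). The partial-sum inequalities are preserved because between \(i\) and \(j\) the entries satisfy \(\eta_k\ge\xi_k\). The new vector stays decreasingly arranged after re-sorting, and it agrees with \(\xi\) in one more coordinate. This bookkeeping is the only step needing care. The main obstacle is checking that the transferred vector still majorizes \(\xi\) and that the process terminates. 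Choosing \(\delta\) as above makes a coordinate equal at each step, which settles termination without any rationality assumption on exponents.

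Since \(\succcurlyeq\) is transitive along the chain, it suffices to prove \(J_\eta\ge J_{\eta'}\) when \(\eta'\) differs from \(\eta\) in only two coordinates \(a>b\). There they become \(a-\delta,\ b+\delta\), with \(a-\delta\ge b+\delta\). In the sum over \(S_n\), pair each permutation \(\sigma\) with \(\sigma\circ(i\,j)\). Factoring out the common monomial in the remaining coordinates, write \(u,v\) for the two variables receiving the exponents in positions \(i,j\). Each pair then contributes the common factor times
\[
u^a v^b + u^b v^a - u^{a-\delta}v^{b+\delta} - u^{b+\delta}v^{a-\delta}
=(uv)^b\bigl(u^{a-b-\delta}-v^{a-b-\delta}\bigr)\bigl(u^{\delta}-v^{\delta}\bigr).
\]
Both differences have the same sign as \(u-v\) for \(u,v\ge0\), since \(a-b-\delta\ge\delta>0\). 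So the product is nonnegative, and the common factor is nonnegative too. Summing over the pairs gives \(J_\eta-J_{\eta'}\ge0\).

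Chaining the transfers yields \(J_\eta\ge J_\xi\) on \(x_i\ge0\). The case \(\eta=\xi\) is trivial. Exponent zero causes no issue: we interpret \(0^0=1\), and the factorization is valid for \(u,v\ge0\).
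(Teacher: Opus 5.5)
The paper does not prove this statement; it cites it as classical (Muirhead, Hardy--Littlewood--P\'olya, Marshall--Olkin). Your route, Robin Hood transfers followed by pairing of orbit terms, is the standard argument from those sources.

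Your single-transfer step is correct. The factorization $(uv)^b(u^{a-b-\delta}-v^{a-b-\delta})(u^\delta-v^\delta)$ is right, and both factors have the sign of $u-v$ because $a-b-\delta\ge\delta>0$. One small notational point: with the paper's convention $J_\eta=\sum_\sigma\prod_k x_k^{\eta_{\sigma(k)}}$, the partner of $\sigma$ should be $(i\,j)\circ\sigma$ rather than $\sigma\circ(i\,j)$. Your description of $u,v$ makes the intent clear.

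The gap is in the transfer lemma, at exactly the bookkeeping step you flagged. With $i$ the smallest index such that $\eta_i>\xi_i$, the transferred vector need not be decreasing. After re-sorting, it need not agree with $\xi$ in any additional coordinate. Take $\eta=(5,5,0)\succcurlyeq\xi=(4,3,3)$: your rule gives $i=1$, $j=3$, $\delta=1$, and the new vector is $(4,5,1)$. Re-sorted, this is $(5,4,1)$, which agrees with $\xi$ in no coordinate, just as $\eta$ did. So the quantity you induct on, the number of disagreements between the sorted vectors, need not decrease. As written, termination is not established.

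Either of two standard repairs closes this.

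(a) Never re-sort. Keep position-labelled vectors and maintain the invariant $\sum_{l\le k}\eta_l\ge\sum_{l\le k}\xi_l$ for all $k$, with $\eta$ unsorted and $\xi$ sorted. Then:
your own partial-sum computation preserves this invariant;
the first disagreement still has $\eta_i>\xi_i$;
the chain $\eta_i-\delta\ge\xi_i\ge\xi_j\ge\eta_j+\delta$ shows each step is a genuine Robin Hood transfer, and since $J$ ignores the order of the exponent vector, your pairing step applies verbatim;
the number of positions with $\eta_l\ne\xi_l$ drops by at least one per step.

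(b) Alternatively, take $i$ to be the \emph{largest} index with $\eta_i>\xi_i$, and $j$ the smallest index $>i$ with $\eta_j<\xi_j$. Then $\eta_l=\xi_l$ for $i<l<j$, the transferred vector stays decreasing, and your sorted-coordinate induction goes through as stated.
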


This is the classical Muirhead inequality \cite{Muirhead1902,HLP1952,MOA2011}.  In particular, if
\begin{equation*}
\lambda\succ\gamma\succ\mu,
\end{equation*}
then
\begin{equation*}
J_\lambda\ge J_\gamma\ge J_\mu.
\end{equation*}
Muirhead's theorem determines the signs of the two successive gaps, but it does not compare their sizes.  The problem of this paper is precisely to determine when
\begin{equation*}
J_\lambda-J_\gamma
\ge
J_\gamma-J_\mu.
\end{equation*}

\subsection{Schur's inequality}

Schur's inequality gives the fundamental classical example \cite{Schur1923}.

\begin{theorem}[Schur's inequality]\label{thm:schur}
For every real \(r\ge0\) and all \(x,y,z\ge0\),
\begin{equation*}
\sum_{\mathrm{cyc}}x^r(x-y)(x-z)\ge0.
\end{equation*}
Equivalently, under the labeled-orbit convention,
\begin{equation*}
J_{(r+2,0,0)}(x,y,z)
+J_{(r,1,1)}(x,y,z)
\ge
2J_{(r+1,1,0)}(x,y,z).
\end{equation*}
\end{theorem}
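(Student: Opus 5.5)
The plan has two parts: prove the cyclic form of Schur's inequality by the standard ordering argument, and then derive the labeled-orbit form by expanding and collecting terms.

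\emph{Cyclic form.} By symmetry of the cyclic sum under all permutations of $(x,y,z)$, we may assume $x\ge y\ge z\ge0$. We group the three terms as
\begin{equation*}
x^r(x-y)(x-z)+y^r(y-x)(y-z)=(x-y)\bigl[x^r(x-z)-y^r(y-z)\bigr].
\end{equation*}
Since $x\ge y\ge0$ and $r\ge0$, we have $x^r\ge y^r\ge0$. We also have $x-z\ge y-z\ge0$. Multiplying these two inequalities between nonnegative quantities shows that the bracket is nonnegative, and $x-y\ge0$. The remaining term is $z^r(z-x)(z-y)=z^r(x-z)(y-z)$, which is a product of nonnegative factors. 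Hence the cyclic sum is nonnegative. The case $r=0$ with $z=0$ needs the convention $0^0=1$, but there the term is still a product of nonnegative factors.

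\emph{Orbit form.} We expand $x^r(x-y)(x-z)=x^{r+2}-x^{r+1}y-x^{r+1}z+x^ryz$ and sum cyclically. This gives
\begin{equation*}
\sum x^{r+2}-\sum_{\mathrm{sym}}x^{r+1}y+\sum x^ryz .
\end{equation*}
Here the middle sum runs over all six ordered pairs of distinct variables.

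Under the labeled convention with $n!=6$ permutations, we identify each orbit sum:
\begin{itemize}
\item $J_{(r+2,0,0)}=2\sum x^{r+2}$, since the two zero exponents can be permuted;
\item $J_{(r,1,1)}=2\sum x^ryz$, for the same reason with the two exponents $1$;
\item $J_{(r+1,1,0)}$ equals the six-term sum $\sum_{\mathrm{sym}}x^{r+1}y$ exactly once, provided $r+1\neq1$ and $1\ne0$, i.e.\ $r>0$.
\end{itemize}
So twice the cyclic expression equals $J_{(r+2,0,0)}+J_{(r,1,1)}-2J_{(r+1,1,0)}$. Dividing by $2$ gives the stated equivalence.

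The case $r=0$ needs a separate check. There $(r+1,1,0)=(1,1,0)$ has a repeated entry, so $J_{(1,1,0)}=2\sum_{\mathrm{cyc}}xy$. Meanwhile the symmetric sum becomes $\sum_{\mathrm{sym}}xy=2\sum xy$. These agree, so the identity still holds. Similarly, $J_{(0,1,1)}$ remains equal to $2\sum yz$.

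The main (though minor) obstacle is this multiplicity bookkeeping in the labeled orbit sums, in particular the degenerate coincidences $r=0$ and $r=1$. For $r=1$, $(r,1,1)=(1,1,1)$, so $J_{(1,1,1)}=6xyz$, while $\sum x^ryz=3xyz$. The factor of $2$ still matches. I would verify these special cases explicitly; everything else is routine.
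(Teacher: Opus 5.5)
Your proof is correct and follows the paper's own argument. You assume $x\ge y\ge z$, factor the first two terms as $(x-y)\bigl[x^r(x-z)-y^r(y-z)\bigr]$, and observe that the third term is a product of nonnegative factors; you also carry out the orbit-sum expansion that the paper only asserts. Your separate checks at $r=0$ and $r=1$ are harmless but unnecessary: the labeled orbit sum runs over all six permutations, so $J_{(r+2,0,0)}=2\sum x^{r+2}$, $J_{(r,1,1)}=2\sum x^ryz$, and $J_{(r+1,1,0)}=\sum_{\mathrm{sym}}x^{r+1}y$ hold uniformly in $r$.
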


For completeness, we recall the short proof.

\begin{proof}
By symmetry assume \(x\ge y\ge z\ge0\).  Then
\begin{align*}
\sum_{\mathrm{cyc}}x^r(x-y)(x-z)
={}&x^r(x-y)(x-z)-y^r(x-y)(y-z)\\
&+z^r(x-z)(y-z).
\end{align*}
The first two terms equal
\begin{equation*}
(x-y)\bigl(x^r(x-z)-y^r(y-z)\bigr),
\end{equation*}
and
\begin{equation*}
x^r(x-z)-y^r(y-z)
=(x^r-y^r)(x-z)+y^r(x-y)\ge0.
\end{equation*}
The last term is also nonnegative.  This proves Schur's inequality.  Expanding the orbit sums gives the equivalent orbit form.
\end{proof}

For \(r\ge1\), the displayed exponent vectors are already partitions and form the majorization chain
\begin{equation*}
(r+2,0,0)
\succ
(r+1,1,0)
\succ
(r,1,1).
\end{equation*}
For \(0\le r<1\), the orbit identity remains valid and the final exponent vector is interpreted after decreasing rearrangement.  Thus Schur compares two successive Muirhead gaps along this one-parameter family, with the usual rearrangement convention when necessary.  It is natural to ask whether all inequalities of the form
\begin{equation*}
J_\lambda+J_\mu\ge2J_\gamma
\end{equation*}
are consequences of this family.  They are not.

\subsection{A degree-nine inequality outside the Schur family}

Consider
\begin{equation*}
(9,0,0)\succ(7,1,1)\succ(3,3,3).
\end{equation*}
For the same endpoints \((9,0,0)\) and \((3,3,3)\), Schur's family has middle partition \((6,3,0)\).  The two middle partitions are incomparable because
\begin{equation*}
7>6,
\qquad
7+1<6+3.
\end{equation*}
Consequently the following inequality is not obtained from Schur by a Muirhead comparison of the middle terms.

\begin{theorem}[A degree-nine inequality]\label{thm:degree-nine}
For all \(a,b,c>0\),
\begin{equation*}
J_{(9,0,0)}(a,b,c)+J_{(3,3,3)}(a,b,c)
\ge
2J_{(7,1,1)}(a,b,c).
\end{equation*}
\end{theorem}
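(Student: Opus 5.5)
The plan is to reduce the inequality to the collision line by a $uvw$-type argument in the cubes of the variables, and then to check the resulting one-variable polynomial directly. The main collision theorem is not yet available at this point, so it cannot be invoked.

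\emph{Expanding the orbit sums.} Under the labeled-orbit convention,
\begin{equation*}
J_{(9,0,0)}=2\sum a^9,\qquad J_{(3,3,3)}=6a^3b^3c^3,\qquad J_{(7,1,1)}=2abc\sum a^6.
\end{equation*}
The claim is therefore equivalent to $\sum a^9+3a^3b^3c^3\ge 2abc\sum a^6$. Put $u=a^3$, $v=b^3$, $w=c^3$ and $g=(uvw)^{1/3}=abc$. The inequality becomes
\begin{equation*}
F:=\sum u^3+3uvw-2g\sum u^2\ge0 .
\end{equation*}

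\emph{The $uvw$ reduction.} Let $p=u+v+w$, $q=uv+vw+wu$ and $s=uvw$, so that $g=s^{1/3}$. Using $\sum u^2=p^2-2q$ and $\sum u^3=p^3-3pq+3s$, we get
\begin{equation*}
F=p^3-2gp^2+6s+q\,(4g-3p).
\end{equation*}
Fix $p$ and $s$, which fixes $g$. By AM--GM, $p\ge3g$, so the coefficient $4g-3p$ is negative. Hence $F$ is a decreasing affine function of $q$, and its minimum over the admissible positive triples is attained where $q$ is maximal. By the standard $uvw$ fact, on the connected compact set of positive triples with prescribed $p$ and $s>0$, the extrema of $q$ occur only when two of $u,v,w$ coincide. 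This is the discriminant boundary: the cubic $T^3-pT^2+qT-s$ must have three positive real roots. Since $s>0$ is fixed, no variable can vanish, so the zero-variable boundary does not arise. It therefore suffices to prove the inequality when two variables are equal, and by symmetry and homogeneity we may take $b=c=1$, $a=t>0$.

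\emph{The collision polynomial.} Here we must show
\begin{equation*}
f(t)=t^9-2t^7+3t^3-4t+2\ge0 .
\end{equation*}
We have $f(1)=f'(1)=0$, and two synthetic divisions give $f(t)=(t-1)^2h(t)$ with
\begin{equation*}
h(t)=t^7+2t^6+t^5-t^3-2t^2+2 .
\end{equation*}
The key regrouping is
\begin{equation*}
h(t)=(1-t^2)(2-t^3)+2t^6+t^7 .
\end{equation*}
For $0<t\le1$ and for $t\ge2^{1/3}$, the product $(1-t^2)(2-t^3)$ is nonnegative, so $h>0$. For $1<t<2^{1/3}$, we have $0<2-t^3<1$ and $t^2-1<2^{2/3}-1<1$, so the product has absolute value below $1<2t^6$. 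Again $h>0$. Thus $f\ge0$, with equality only at $t=1$, which proves the inequality.

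\emph{Main obstacle.} The delicate step is the $uvw$ reduction. One must make sure that $g$ depends only on the fixed quantity $s$, which it does. One must also invoke correctly that the extremal values of $q$, at fixed $p$ and $s$, occur at a double root of the cubic. If that step caused trouble, the fallback would be to fix $abc=1$ and study the constrained critical points of $F$ directly via Lagrange multipliers, again showing that they force two variables to be equal.
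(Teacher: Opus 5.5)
Your proof is correct, but it reaches the two-equal-variable locus by a different mechanism from the paper's. The paper divides by $(abc)^3$ and passes to $\xi=a^2/(bc)$, $\eta=b^2/(ca)$, $\zeta=c^2/(ab)$, so that $\xi\eta\zeta=1$. It takes $\xi$ minimal and shows by an explicit factorization of $f(\xi,\eta,\zeta)-f(\xi,t,t)$, using $\eta\zeta\ge1$, that replacing $\eta,\zeta$ by their geometric mean $t$ does not increase the expression. It then verifies $2t^9-4t^8+3t^6-2t^2+1\ge0$ only for $t\ge1$.

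You instead note that after $u=a^3$ the factor $abc=(uvw)^{1/3}$ depends on $s$ alone. Hence $F$ is affine in $q$ at fixed $(p,s)$, with slope $4g-3p\le-5g<0$, and the $uvw$ extremal principle finishes the reduction. Your one-variable polynomial is exactly the collision function $Q(t)$ of Section~4.4, so you have in effect proved the collision criterion for this single chain.

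What each buys: your route is shorter and more conceptual. However, it imports the $uvw$ fact as a black box, which deserves a two-line justification: the roots are the intersections of the horizontal level $q$ with the graph of $T\mapsto s/T+pT-T^2$ on $T>0$. It also relies on the linearity in $q$ that the cube substitution happens to produce for this particular chain. The paper's mixing argument is fully self-contained, with explicit algebraic certificates.

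One sharpening follows from that same graph picture. At $q_{\max}$ the double root sits at the local maximum of the graph, so it is the larger root. The extremal configuration is therefore $a<b=c$, and only $0<t\le1$ is needed. There $(1-t^2)(2-t^3)+2t^6+t^7>0$ is immediate. This mirrors the paper's choice of $\xi$ minimal and makes your case split at $2^{1/3}$ unnecessary.
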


\begin{proof}
With the labeled-orbit convention,
\begin{equation*}
J_{(9,0,0)}=2(a^9+b^9+c^9),
\qquad
J_{(3,3,3)}=6(abc)^3,
\end{equation*}
and
\begin{equation*}
J_{(7,1,1)}=2abc(a^6+b^6+c^6).
\end{equation*}
Thus it is enough to prove
\begin{equation*}
a^9+b^9+c^9+3(abc)^3
\ge
2abc(a^6+b^6+c^6).
\end{equation*}
Divide by \((abc)^3\) and set
\begin{equation*}
\xi=\frac{a^2}{bc},
\qquad
\eta=\frac{b^2}{ca},
\qquad
\zeta=\frac{c^2}{ab}.
\end{equation*}
Then
\begin{equation*}
\xi\eta\zeta=1,
\end{equation*}
and the desired inequality becomes
\begin{equation}\label{eq:degree9-scaled}
\xi^3+\eta^3+\zeta^3+3
\ge
2(\xi^2+\eta^2+\zeta^2).
\end{equation}
By symmetry suppose that \(\xi=\min\{\xi,\eta,\zeta\}\).  Since \(\xi\eta\zeta=1\), we have \(\eta\zeta\ge1\).  Put
\begin{equation*}
t=\sqrt{\eta\zeta}\ge1,
\qquad
q=\sqrt\eta,
\qquad
r=\sqrt\zeta,
\end{equation*}
so that \(qr=t\).  Define
\begin{equation*}
f(\xi,\eta,\zeta)
=\xi^3+\eta^3+\zeta^3
-2(\xi^2+\eta^2+\zeta^2).
\end{equation*}
Replacing \(\eta,\zeta\) by their geometric mean cannot increase this expression, because
\begin{align*}
f(\xi,\eta,\zeta)-f(\xi,t,t)
&=(q^3-r^3)^2-2(q^2-r^2)^2\\
&=(q-r)^2\Bigl((q^2+qr+r^2)^2-2(q+r)^2\Bigr)\ge0.
\end{align*}
Indeed, using \(qr\ge1\),
\begin{align*}
(q^2+qr+r^2)^2
&\ge q^4+r^4+2q^2r^2+2qr(q^2+r^2)\\
&\ge4q^2r^2+2(q^2+r^2)\\
&\ge2(q+r)^2.
\end{align*}
Since \(\xi t^2=1\), we have \(\xi=t^{-2}\).  It remains to check
\begin{equation*}
f(t^{-2},t,t)+3\ge0
\qquad(t\ge1).
\end{equation*}
After multiplication by \(t^6>0\), this is
\begin{equation*}
2t^9-4t^8+3t^6-2t^2+1\ge0.
\end{equation*}
Finally,
\begin{align*}
2t^9-4t^8+3t^6-2t^2+1
=(t-1)^2\Bigl(&t^3(t^2-1)^2+t(t^3-1)^2\\
&+t^2(t^2-t+1)+t+1\Bigr),
\end{align*}
which is nonnegative for \(t\ge1\).  This proves \eqref{eq:degree9-scaled} and hence the theorem.
\end{proof}

This example shows concretely that the three-orbit problem extends beyond the classical Schur family.

\subsection{Comparison with the half-degree principle}

There is a general reduction theorem for symmetric polynomial inequalities.  The following form is due to Timofte, with a later elementary proof by Riener \cite{Timofte2003,Riener2012}.

\begin{theorem}[Half-degree principle]\label{thm:half-degree}
Let \(f\) be a real symmetric polynomial of degree \(d\) in \(n\) variables.  Put
\begin{equation*}
k=\max\left\{2,\left\lfloor\frac d2\right\rfloor\right\}.
\end{equation*}
Then \(f\) is nonnegative on \(\mathbb R_{\ge0}^n\) if and only if it is nonnegative at every point having at most \(k\) distinct coordinates.
\end{theorem}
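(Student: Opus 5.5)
The half-degree principle is a classical result, and I would follow Riener's elementary argument rather than Timofte's original one. The idea is to fix the power sums of low order and study where a symmetric polynomial can attain its extrema on the resulting slice. The "only if" direction is trivial, so assume \(f\ge0\) at every point of \(\mathbb R_{\ge0}^n\) with at most \(k\) distinct coordinates. Write \(f\) in terms of the power sums \(p_j=\sum_i x_i^j\). Since \(\deg f=d\), every monomial \(p_{j_1}\cdots p_{j_m}\) in \(f\) has \(\sum j_\ell\le d\), so at most one factor has index greater than \(k=\lfloor d/2\rfloor\). Therefore
\begin{equation*}
f=G(p_1,\ldots,p_k)+\sum_{j=k+1}^{d}c_j\,p_j,
\end{equation*}
with constants \(c_j\), and \(f\) is \emph{linear} in the high power sums \(p_{k+1},\ldots,p_d\). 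If \(d\le3\), then \(k=2\), and the same representation holds because \(p_3\) can only appear linearly.

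Fix \(x^\ast\in\mathbb R_{\ge0}^n\) and consider the set
\begin{equation*}
K=\{x\in\mathbb R_{\ge0}^n:\ p_j(x)=p_j(x^\ast),\ 1\le j\le k\}.
\end{equation*}
\(K\) is compact because \(p_2\) is fixed, and on \(K\) the function \(f\) agrees with the linear functional \(\ell(x)=\sum_{j>k}c_jp_j(x)\) plus a constant. So it suffices to show that \(\ell\) attains its minimum on \(K\) at a point with at most \(k\) distinct coordinates. Then \(f(x^\ast)\ge\min_K f\ge0\).

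Let \(y\) be a minimizer of \(\ell\) on \(K\). Suppose \(y\) has distinct values \(v_1<\dots<v_m\) with \(m\ge k+1\), with multiplicities \(n_i\). Restrict to points that keep the same block structure, so the variables are \(v=(v_1,\dots,v_m)\). Separate two cases:
\begin{itemize}
\item If all \(v_i>0\), the Lagrange condition on the block structure says that the vector \(\bigl(\sum_j c_j\, j\, v_i^{j-1}\bigr)_i\) is a combination of the vectors \((j v_i^{j-1})_i\) for \(j\le k\). Equivalently, the one-variable polynomial \(h(s)=\sum_{j>k}c_jjs^{j-1}-\sum_{j\le k}\mu_jjs^{j-1}\) vanishes at the \(m\) points \(v_i\).
\item If \(v_1=0\), the same condition holds at the positive values, with a one-sided inequality at the zero.
\end{itemize}
The key step, and the main obstacle, is showing that the minimum cannot occur with \(m>k\) distinct values. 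Riener's route uses the Vandermonde structure. The gradients \((j v_i^{j-1})_{i}\), \(j=1,\dots,k\), are linearly independent when the \(v_i\) are distinct, since they form a generalized Vandermonde matrix. So the constraint set near \(y\) is a manifold of dimension \(m-k\ge1\), and there is a smooth curve through \(y\) in \(K\) that moves the \(v_i\).

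On such a curve the values can be perturbed until two of them merge or one reaches \(0\) without increasing \(\ell\). To show this, I would apply the bound \(\sum j_\ell\le d\) to the cross terms in \(G\) to see that \(\ell\) restricted to \(K\) is effectively a function of the remaining moments only. Then I would invoke a moment-problem extremality result: for a measure \(\sum n_i\delta_{v_i}\) with fixed first \(k\) moments, a linear functional of the higher moments of degree \(\le 2k+1\) is extremized at measures supported on at most \(k\) points, or \(k\) points including the endpoint \(0\). This is a Carathéodory/Descartes sign-change count: \(h\) has too few sign changes to vanish at \(m>k\) points while satisfying the second-order condition. The resulting contradiction lets us strictly reduce \(m\), and an induction on \(m\) finishes the proof.

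Zero coordinates count as one of the \(k\) values, which matches the statement's "at most \(k\) distinct coordinates". The delicate points are making the degree bound \(d\le 2k+1\) match the sign-change count exactly, and handling the boundary case where a coordinate is \(0\).
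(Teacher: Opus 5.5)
The paper gives no proof of this statement; it only cites Timofte and Riener. Your proposal has a genuine gap, and it sits exactly at the ``delicate point'' you flag: the zero boundary. Your closing claim that a zero coordinate counts as one of the \(k\) values is false. Read that way, the statement itself fails on \(\mathbb R_{\ge0}^n\). Take \(n=3\), \(d=4\) (so \(k=2\)) and
\begin{equation*}
F=(e_1^2-4e_2)(e_1^2-8e_2)+16\,e_1e_3 .
\end{equation*}
Every point of \(\mathbb R_{\ge0}^3\) with at most two distinct coordinates is a permutation of some \((a,b,b)\) with \(a,b\ge0\), and
\begin{equation*}
F(a,b,b)=a\bigl[a(a-6b)(a-10b)+48b^3\bigr]\ge0.
\end{equation*}
Indeed, \(a(a-6b)(a-10b)\ge0\) unless \(6b<a<10b\), and on that range \(a(a-6b)(10b-a)\le10b\cdot4b^2=40b^3\). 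Yet \(F(2,1,0)=-7\).

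In your argument this is the case \(v_1=0\), \(m=k+1\). If the zero block is kept at \(0\), the \(k\) positive block values are tied by \(k\) independent constraints, so the two-sided curve you want does not exist. Lifting the zeros is only a one-sided move. Correctly stated, the moment-type extremality you invoke allows at most \(k\) atoms in \((0,\infty)\) \emph{plus} an atom at \(0\). What is true on the orthant is Timofte's form: one must test all points with at most \(k\) distinct \emph{nonzero} coordinates, such as \((2,1,0)\) above. (The paper's wording, and its remark that for \(n=3\), \(k=2\) the principle reduces to the two-equal-variable locus, need the same correction.)

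Even away from zero, the decisive step is asserted rather than proved. The first-order condition only says that \(h\), of degree at most \(d-1\le2k\), vanishes at the \(m\) values, which is no contradiction for \(k<m\le2k\). At a minimizer, a curve in the constraint manifold generally increases \(\ell\), so ``perturb without increasing \(\ell\)'' needs a reason. The cited extremality result is essentially the theorem itself.

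The missing reason is affineness in the right coordinates. With \(e_1,\dots,e_k\) fixed, \(f\) is affine in \(e_{k+1},\dots,e_d\), since a product of two \(e_j\) with \(j>k\) has degree at least \(2k+2>d\). Write \(\prod_i(z-x_i)=QR\) with \(R=\prod_{i=1}^m(z-v_i)\). For \(0\ne S\) with \(\deg S\le m-k-1\), the polynomials \(Q(R+\varepsilon S)\) have the same \(e_1,\dots,e_k\). They remain real-rooted with positive roots for small \(|\varepsilon|\) of either sign. Hence a point with \(m>k\) distinct positive values is the midpoint of a segment in the compact slice of the orbit space, so it is not an extreme point of the slice's convex hull. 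An affine function on a compact set attains its minimum at a point of the set that is extreme in its convex hull. If \(0\) is a root, one needs \(z\mid S\), which is possible only when \(m\ge k+2\); this is precisely where the extra zero value comes from.

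A minor slip: the coefficients of \(p_{k+1},\dots,p_d\) are polynomials in \(p_1,\dots,p_k\) (e.g.\ \(p_1p_3\) when \(d=4\)), not constants. This is harmless on \(K\).
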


For \(n=3\), this is a genuine reduction to the two-equal-variable boundary only while \(k\le2\).  Once \(d\ge6\), one has \(k\ge3\), and every point of \(\mathbb R_{\ge0}^3\) already has at most three distinct coordinates.  The general half-degree principle then gives no dimensional reduction.  In particular, for the degree-nine inequality above it asks us to test the full three-variable domain.

The theorem proved here is different in character.  For the special second differences considered here, nonnegativity on the two-equal-variable boundary is sufficient in every degree.  This degree-independent reduction is one of the main reasons for studying the collision condition.

For the remainder of the paper, let
\begin{equation*}
\lambda\succ\gamma\succ\mu
\end{equation*}
be an equal-degree majorization chain and define
\begin{equation*}
P_{\lambda,\gamma,\mu}
=J_\lambda+J_\mu-2J_\gamma.
\end{equation*}

\section{The Two-Variable Collision Criterion}

The two-variable case is governed completely by the behavior at the collision \(x=y\).  We first introduce collision coordinates and the associated quadratic quantity, then state the exact criterion.

\subsection{Collision coordinates and quadratic data}

We first examine the problem in two variables.  The set where the two positive variables coincide is the diagonal
\begin{equation*}
x=y.
\end{equation*}
We call this diagonal the \emph{collision locus}.  Since the orbit sums are homogeneous and symmetric, every positive pair can be written uniquely in the form
\begin{equation*}
x=ge^t,
\qquad
y=ge^{-t},
\qquad
g>0,
\end{equation*}
with \(t\in\mathbb R\).  The collision \(x=y\) corresponds exactly to \(t=0\).  After the common homogeneous factor is removed, the two-variable defect is therefore an even function of one real parameter.  Its constant and linear terms vanish at the collision, so the first possible obstruction is quadratic.

For an equal-degree chain define the quadratic quantity
\begin{equation*}
\Energy(\lambda,\gamma,\mu)
=\frac12\left(
\|\lambda\|_2^2+\|\mu\|_2^2-2\|\gamma\|_2^2
\right).
\end{equation*}
The next theorem shows that in two variables this single number is a complete criterion.

\subsection{The exact criterion}

\begin{theorem}[Two-variable collision criterion]\label{thm:two-variable}
Let
\begin{equation*}
\lambda\succ\gamma\succ\mu
\end{equation*}
be equal-degree nonnegative two-part partitions.  Real exponents are allowed.  Then
\begin{equation*}
P_{\lambda,\gamma,\mu}(x,y)\ge0
\qquad(x,y>0)
\end{equation*}
if and only if
\begin{equation*}
\Energy(\lambda,\gamma,\mu)\ge0.
\end{equation*}
\end{theorem}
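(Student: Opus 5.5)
Two-part partitions with equal degree can be written as \(\lambda=(s+a,s-a)\), \(\gamma=(s+b,s-b)\), \(\mu=(s+c,s-c)\) with common half-sum \(s\) and \(a>b>c\ge0\); majorization of two-part vectors is just the ordering of the half-gaps. In the collision coordinates \(x=ge^t\), \(y=ge^{-t}\), the labeled orbit sum is
\begin{equation*}
J_{(s+a,s-a)}(x,y)=g^{2s}\bigl(e^{2at}+e^{-2at}\bigr)=2g^{2s}\cosh(2at).
\end{equation*}
Hence
\begin{equation*}
P_{\lambda,\gamma,\mu}(x,y)=2g^{2s}\,F(u),\qquad F(u)=\cosh(au)+\cosh(cu)-2\cosh(bu),
\end{equation*}
with \(u=2t\in\mathbb R\). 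The plan is therefore to show that \(F(u)\ge0\) for all real \(u\) if and only if \(a^2+c^2\ge2b^2\). A short computation gives \(\|\lambda\|_2^2=2s^2+2a^2\), and similarly for \(\gamma\) and \(\mu\). So \(\Energy(\lambda,\gamma,\mu)=a^2+c^2-2b^2\), and the problem reduces to this one-variable statement.

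\emph{Necessity.} Expand \(F\) at \(u=0\):
\begin{equation*}
F(u)=\tfrac12(a^2+c^2-2b^2)u^2+O(u^4).
\end{equation*}
If \(\Energy<0\), then \(F<0\) for small \(u\neq0\), and so \(P<0\) near the diagonal.

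\emph{Sufficiency.} Expand \(F\) in its full power series,
\begin{equation*}
F(u)=\sum_{k\ge1}\frac{u^{2k}}{(2k)!}\bigl(a^{2k}+c^{2k}-2b^{2k}\bigr).
\end{equation*}
It is enough to show that every coefficient is nonnegative once the \(k=1\) coefficient is. Put \(A=a^2\), \(B=b^2\), \(C=c^2\), so that \(A>B>C\ge0\) and \(A+C\ge2B\). We need \(A^k+C^k\ge2B^k\) for all \(k\ge1\). By convexity of \(x\mapsto x^k\) on \([0,\infty)\), we have \(\tfrac12(A^k+C^k)\ge\bigl(\tfrac{A+C}{2}\bigr)^k\). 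Then \(\tfrac{A+C}{2}\ge B\ge0\) gives \(\bigl(\tfrac{A+C}{2}\bigr)^k\ge B^k\). Every term of the series is then nonnegative for real \(u\), so \(F\ge0\) and \(P\ge0\). The argument uses only real \(a,b,c\), so real exponents are covered automatically.

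The only point needing care is the bookkeeping in the reduction: checking that the half-sums agree (this is the equal-degree hypothesis) and that the labeled-orbit convention produces exactly \(2\cosh\) with no extra multiplicities, including when a part repeats, as in \(\mu=(s,s)\) with \(c=0\). In that case \(J_\mu=2g^{2s}=2g^{2s}\cosh 0\), which is consistent.
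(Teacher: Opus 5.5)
Your proof is correct and follows essentially the same route as the paper: the same \(\cosh\) parametrization, necessity from the quadratic Taylor term at the collision, and sufficiency from the power series of \(\cosh\). The paper applies Jensen's inequality and monotonicity to \(z\mapsto\cosh(t\sqrt z)\), reading off its convexity from the nonnegative series coefficients, while you apply the same convexity and monotonicity to each \(z\mapsto z^k\) separately; these are the same argument in slightly different packaging.
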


\begin{proof}
Let the common degree be \(N\), and write
\begin{equation*}
x=ge^t,
\qquad
y=ge^{-t}.
\end{equation*}
For a two-part partition \(\eta\), let
\begin{equation*}
\delta_\eta=\eta_1-\eta_2\ge0.
\end{equation*}
Then
\begin{equation*}
J_\eta(x,y)=2g^N\cosh(\delta_\eta t).
\end{equation*}
Hence the desired inequality is equivalent to
\begin{equation*}
\cosh(\delta_\lambda t)+\cosh(\delta_\mu t)
\ge2\cosh(\delta_\gamma t)
\qquad(t\in\mathbb R).
\end{equation*}
The quadratic term at \(t=0\) shows that this requires
\begin{equation*}
\delta_\lambda^2+\delta_\mu^2\ge2\delta_\gamma^2,
\end{equation*}
which is equivalent to \(\Energy(\lambda,\gamma,\mu)\ge0\).

Conversely, fix \(t\in\mathbb R\) and define
\begin{equation*}
f_t(z)=\cosh(t\sqrt z),
\qquad z\ge0.
\end{equation*}
Its power-series expansion is
\begin{equation*}
f_t(z)
=\sum_{k=0}^{\infty}\frac{t^{2k}z^k}{(2k)!}.
\end{equation*}
Termwise differentiation gives
\begin{equation*}
f_t'(z)
=\sum_{k=1}^{\infty}
\frac{k\,t^{2k}z^{k-1}}{(2k)!}
\ge0,
\end{equation*}
and
\begin{equation*}
f_t''(z)
=\sum_{k=2}^{\infty}
\frac{k(k-1)t^{2k}z^{k-2}}{(2k)!}
\ge0
\qquad(z\ge0).
\end{equation*}
Hence \(f_t\) is increasing and convex on \([0,\infty)\).  Therefore
\begin{align*}
\frac{\cosh(\delta_\lambda t)+\cosh(\delta_\mu t)}2
&=\frac{f_t(\delta_\lambda^2)+f_t(\delta_\mu^2)}2\\
&\ge
f_t\!\left(\frac{\delta_\lambda^2+\delta_\mu^2}{2}\right)\\
&\ge f_t(\delta_\gamma^2)
=\cosh(\delta_\gamma t),
\end{align*}
where the last inequality uses
\begin{equation*}
\frac{\delta_\lambda^2+\delta_\mu^2}{2}
\ge\delta_\gamma^2.
\end{equation*}
This proves the theorem.
\end{proof}

Thus in two variables the second-order behavior at the collision is complete.  The next section shows that this phenomenon changes sharply in three variables: the same quadratic quantity remains necessary, but it no longer determines the sign of the full collision restriction.

\section{The Three-Variable Collision Criterion}
\label{sec:collision-criterion}

In three variables the quadratic collision quantity is no longer complete; the whole one-variable collision function is required.  We first introduce this function and show why second-order information alone is insufficient.  We then state the complete collision criterion and explain how its hypothesis can be verified exactly by Sturm's theorem.

\subsection{The collision function and second-order data}

For a symmetric three-variable polynomial, the locus where two variables coincide consists of the three sets \(x=y\), \(y=z\), and \(z=x\).  By symmetry it is enough to use one of them, and by homogeneity every positive point on that locus can be normalized to
\begin{equation*}
(t,1,1),
\qquad t>0.
\end{equation*}
For a fixed equal-degree chain
\begin{equation*}
\lambda\succ\gamma\succ\mu,
\end{equation*}
define the collision function by
\begin{equation*}
Q(t)
=\sum_{i=1}^3 t^{\lambda_i}
+\sum_{i=1}^3 t^{\mu_i}
-2\sum_{i=1}^3 t^{\gamma_i}.
\end{equation*}
The labeled-orbit convention gives
\begin{equation*}
P_{\lambda,\gamma,\mu}(t,1,1)=2Q(t).
\end{equation*}
Thus the collision condition is simply
\begin{equation*}
Q(t)\ge0
\qquad(t>0).
\end{equation*}

\begin{lemma}[Second-order collision data]\label{lem:collision-energy}
For every equal-degree three-variable chain,
\begin{equation*}
Q(1)=Q'(1)=0,
\qquad
Q''(1)=2\Energy(\lambda,\gamma,\mu).
\end{equation*}
\end{lemma}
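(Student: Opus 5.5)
The plan is to differentiate $Q$ term by term at $t=1$, using only the fact that each exponent vector has three entries and that all three share the common degree $N=\sum_i\lambda_i=\sum_i\gamma_i=\sum_i\mu_i$. For a single exponent $a$ we have
\begin{equation*}
\frac{d}{dt}t^a\Big|_{t=1}=a,
\qquad
\frac{d^2}{dt^2}t^a\Big|_{t=1}=a(a-1).
\end{equation*}
Applying this to each monomial in $Q$, the three derivatives at $t=1$ become explicit combinations of the power sums of the entries of $\lambda$, $\gamma$ and $\mu$.

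The steps are as follows.

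\textbf{Value at $1$.} Each inner sum $\sum_i t^{\eta_i}$ equals $3$ at $t=1$. Hence $Q(1)=3+3-2\cdot3=0$.

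\textbf{First derivative.} We get $Q'(1)=\sum_i\lambda_i+\sum_i\mu_i-2\sum_i\gamma_i=N+N-2N=0$. This is exactly where the equal-degree hypothesis is used.

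\textbf{Second derivative.} We get
\begin{equation*}
Q''(1)=\sum_i\lambda_i(\lambda_i-1)+\sum_i\mu_i(\mu_i-1)-2\sum_i\gamma_i(\gamma_i-1).
\end{equation*}
The linear pieces $-\sum_i\eta_i$ contribute $-N-N+2N=0$, again by equal degree. What remains is $\|\lambda\|_2^2+\|\mu\|_2^2-2\|\gamma\|_2^2$, which is $2\Energy(\lambda,\gamma,\mu)$ by the definition of $\Energy$ in Section~3.

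There is no real obstacle here; the argument is a direct computation. The only point needing care is bookkeeping. The labeled-orbit factor $P=2Q$ is irrelevant, since the statement concerns $Q$ itself. Zero exponents (for instance when $\mu$ or $\lambda$ has a zero part) contribute constants $t^0=1$, and the formulas $a$ and $a(a-1)$ still correctly give $0$ for them. Because only derivatives of $t^a$ at $t=1$ are used, the same computation is valid verbatim for real exponents.
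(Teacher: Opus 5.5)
Your proposal is correct and matches the paper's proof: you evaluate at $t=1$, use the equal-degree hypothesis to get $Q'(1)=0$, and differentiate twice to obtain $\sum_i\eta_i(\eta_i-1)$ terms. You are slightly more explicit than the paper in pointing out that the linear pieces $-\sum_i\eta_i$ cancel by equal degree, which is what leaves exactly $\|\lambda\|_2^2+\|\mu\|_2^2-2\|\gamma\|_2^2$.
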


\begin{proof}
At \(t=1\), each of the three exponent sums equals \(3\), so \(Q(1)=0\).  Equality of the total degrees gives
\begin{equation*}
Q'(1)=|\lambda|+|\mu|-2|\gamma|=0.
\end{equation*}
Finally,
\begin{align*}
Q''(1)
&=\sum_i\lambda_i(\lambda_i-1)
 +\sum_i\mu_i(\mu_i-1)
 -2\sum_i\gamma_i(\gamma_i-1)\\
&=\|\lambda\|_2^2+\|\mu\|_2^2-2\|\gamma\|_2^2\\
&=2\Energy(\lambda,\gamma,\mu).
\end{align*}
\end{proof}

Collision positivity therefore implies \(\Energy(\lambda,\gamma,\mu)\ge0\).  Unlike the two-variable case, this quadratic condition is not sufficient.

\begin{example}[The quadratic condition is not sufficient]\label{ex:quadratic-not-sufficient}
Let
\begin{equation*}
\lambda=(4,1,0),
\qquad
\gamma=(3,2,0),
\qquad
\mu=(3,1,1).
\end{equation*}
Then
\begin{equation*}
\lambda\succ\gamma\succ\mu
\end{equation*}
and
\begin{equation*}
\Energy(\lambda,\gamma,\mu)=1>0.
\end{equation*}
However,
\begin{align*}
Q(t)
&=t^4-t^3-2t^2+3t-1\\
&=(t-1)^2(t^2+t-1).
\end{align*}
Hence
\begin{equation*}
Q(t)<0
\end{equation*}
for
\begin{equation*}
0<t<\frac{\sqrt5-1}{2}.
\end{equation*}
Thus even strict positivity of the quadratic collision quantity does not imply collision positivity in three variables.
\end{example}

This example identifies the correct replacement for the quadratic criterion: in three variables one must control the entire function \(Q(t)\), not only its second derivative at \(t=1\).

\subsection{The complete criterion}

\begin{theorem}[Three-variable collision criterion]\label{thm:main}
Let
\begin{equation*}
\lambda\succ\gamma\succ\mu
\end{equation*}
be equal-degree nonnegative integer partitions with at most three parts.  Then
\begin{equation*}
P_{\lambda,\gamma,\mu}(x,y,z)\ge0
\qquad(x,y,z>0)
\end{equation*}
if and only if
\begin{equation*}
Q(t)\ge0
\qquad(t>0).
\end{equation*}
\end{theorem}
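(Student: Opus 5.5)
Necessity is immediate, since \(P_{\lambda,\gamma,\mu}(t,1,1)=2Q(t)\). The work is sufficiency, and I would follow the architecture sketched in the introduction.

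\textbf{Normalization.} First translate all three partitions by a common multiple of \((1,1,1)\). This multiplies \(P\) by a power of \(xyz\) and \(Q\) by a power of \(t\), so neither sign changes, and we may assume \(\lambda=(p,q,0)\) with \(p\ge q\ge 0\). The partitions \(\gamma\) and \(\mu\) then range over a finite lattice region cut out by the dominance conditions.

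I would parameterize \(\gamma\) by two lattice coordinates, for instance the Robin Hood transfers from \(\lambda\): \(a\) units moved from the first part to the second and \(c\) units from the second to the third. The endpoint \(\mu\) is parameterized similarly. On the collision quotient \(Q(t)/(t-1)^2\), or a suitable normalization of it, the finite-difference identities in these coordinates should show two things. The mixed second difference in \((a,c)\) is a nonpositive combination of monomials, and the dependence on \(\mu\) is separable.

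\textbf{Descent.} This gives a monotonicity/descent argument. If the pair \((\gamma,\mu)\) is collision-positive, then moving \(\gamma\) toward \(\mu\) (or \(\mu\) toward the bottom) along suitable directions preserves collision positivity. The key is to show that the same moves also preserve the sign of \(P\) in a direction where \(P\) only gets harder. Then it suffices to prove \(P\ge 0\) for \emph{minimal} collision-positive states, meaning those that cannot be moved further while keeping \(Q\ge0\).

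\textbf{Global inequality at minimal states.} Write \(x=e^{u}\), \(y=e^{v}\), \(z=1\) and order \(x\ge y\ge z\). I would express \(J_\lambda-J_\gamma\) and \(J_\gamma-J_\mu\) as sums over the Robin Hood transfer paths. Each gap becomes a sum of terms of the form \((x^{\alpha}-y^{\alpha})(x^{\beta}-y^{\beta})\times\text{monomial}\) over finite index supports. The comparison then reduces to inequalities between two finite sums of such kernels.

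I would split the minimal states by how the two supports (of the \(\lambda\to\gamma\) and \(\gamma\to\mu\) transfers) sit relative to each other: crossing, meeting at a seam, touching, or one contained in the other.

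\emph{Crossing and seam.} Here I would use that the kernel \((s,\tau)\mapsto x^{s}y^{\tau}\) is totally positive of order \(2\) for \(x\ge y\). An ordered-minor (uncrossing) argument then shows the ratio of the two support sums is monotone in \(x/y\) and in \(y/z\). That reduces the inequality to the boundary \(y=z\), which is exactly \(Q\ge 0\).

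\emph{Contact and contained.} Here I would exploit reciprocal symmetry. The substitution \((x,y,z)\mapsto(1/x,1/y,1/z)\), followed by multiplying by a monomial, maps the chain to its reciprocal complement \((N-\mu_3,\dots)\). This turns contained configurations into exterior ones, while collision positivity is preserved through \(t\mapsto 1/t\). Since complementation need not preserve minimality, I would take a counterexample of maximal transfer depth and derive a contradiction.

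\textbf{Main obstacle.} I expect the hardest steps to be the uncrossing and total-positivity step, and especially the degenerate seam where one support is a single point. There monotonicity of ratios fails, and a direct decomposition is needed: split into three channels, corresponding to the three pairs of variables, and apply an elementary weighted AM–GM, i.e.\ multiplicative-kernel, inequality on each channel.
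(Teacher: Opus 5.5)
Your outline follows the paper's architecture closely: translation to $\lambda=(p,q,0)$, descent to minimal collision-positive states, the split by $\chi=q+r-\kappa$, ordered-minor uncrossing, a three-channel multiplicative-kernel argument at the degenerate seam, and a maximal-depth reciprocal counterexample. However, the reduction step is stated backwards. Moving $\gamma$ toward $\mu$ makes $\gamma$ more balanced, so $J_\gamma$ drops and $P$ \emph{increases}. That move trivially preserves $Q\ge0$, but proving $P\ge0$ at the moved state says nothing about the original state. The paper's predecessors instead move $\gamma$ toward $\lambda$ ($r\mapsto r-1$, $s\mapsto s-1$), together with $\mu$ toward the bottom ($\kappa\mapsto\kappa+1$, $\beta$ fixed). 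These moves lower $P$ pointwise by Muirhead, so $P_{r,s,\kappa}\ge P_{r_*,s_*,\kappa_*}$ is automatic and there is no ``key'' step to prove there. Contrary to your claim, they do \emph{not} preserve collision positivity; that is why one descends only while $Q\ge0$ survives, and a state is minimal when every legal predecessor fails. A smaller point: with $a$ counting first-to-second and $c$ counting second-to-third transfers, $\Delta_a\Delta_cQ=+2t^{\gamma_2-1}$, which is not nonpositive. The paper's $\Delta_r\Delta_sQ=-2t^{p-r-s-2}$ uses first-to-second and first-to-third moves, and its sign is not needed for the descent anyway.

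The more substantive omission is the phase split by the signs of $a-\alpha$ and $c-\delta$. The paper settles the rectangle $a\le\alpha$, $c\le\delta$ without any descent: there $(\lambda+\mu)/2\succcurlyeq\gamma$, so termwise AM--GM and Muirhead give $J_\lambda+J_\mu\ge2J_{(\lambda+\mu)/2}\ge2J_\gamma$. Reciprocity $(\alpha,\delta;a,c)\leftrightarrow(\delta,\alpha;c,a)$ swaps the two one-sided regions. All remaining work is then done under $a>\alpha$, which every predecessor preserves. The seam and residual base cases are proved only in those regions and genuinely depend on the phase. On the reciprocal one-sided seam, $V=0$ gives $c=a+1$, and it is $c>\delta$ that forces $r\ge0$ so that the $V=0$ channel lemma applies. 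The lemma for $V=0$, $r<0$ assumes double exterior, $c>\delta$. You also need the common-middle lifting lemma for $a=c$, to which both $V=1$ on the seam and the corner $\chi=V=\nu=0$ reduce; nothing in your plan plays that role.

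Two further corrections concern your crossing step. First, the kernel $(s,\tau)\mapsto x^sy^\tau$ as written is rank one, so its TP$_2$ property is vacuous. The paper instead uses monotonicity of $\rho_r=2t^{\chi-1}A_{U+1}/A_{V-1}$, downward-closed threshold sets in the moving-index $\times$ spectator order, and Pl\"ucker uncrossing to a nested, coefficientwise nonnegative expansion. Second, inside $x\ge y\ge z$ the face $y=z$ sees only $t\ge1$, which is not enough even at crossing support. For example, $\lambda=(8,3,0)$, $\gamma=(7,4,0)$, $\mu=(7,3,1)$ has $\chi=3$, $V=2$ and $Q=(t-1)^2(t^6+t^5+t^4+t^3-t^2-t-1)$. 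This is nonnegative for $t\ge1$ but tends to $-1$ as $t\to0^+$, so the face $x=y$ or minimality must enter your argument. Finally, the degenerate seam is $V\in\{0,1\}$, where the terminal factor $t^\kappa A_{V-1}$ disappears; it is not a one-point support, and the one-point case $V=2$ is still covered by ordered minors.
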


Necessity follows immediately by substituting \((t,1,1)\).  The remainder of the paper proves sufficiency by the sector decomposition of Section~\ref{sec:global-roadmap}; the final negative-offset one-sided completion is proved in Proposition~\ref{prop:partii-negative-r}.  The next subsection explains how the one-variable hypothesis can be checked exactly for any fixed integer chain.

\subsection{Checking the collision condition by Sturm's theorem}

For integer exponent partitions, \(Q(t)\) is an ordinary polynomial with integer coefficients.  Lemma~\ref{lem:collision-energy} shows that \(t=1\) is always a zero of multiplicity at least two.  Unless \(Q\) is identically zero, write
\begin{equation*}
Q(t)=(t-1)^2\widehat Q(t).
\end{equation*}
Then collision positivity is equivalent to
\begin{equation*}
\widehat Q(t)\ge0
\qquad(t>0).
\end{equation*}

Sturm's theorem gives an exact finite procedure for deciding this one-variable condition \cite{BPR2006}.  Construct a Sturm sequence for the square-free factors of \(\widehat Q\), use it to isolate all positive real zeros, and evaluate \(\widehat Q\) at one point in each interval between consecutive zeros.  The collision condition holds exactly when none of those interval signs is negative.  Thus, for any fixed integer chain \(\lambda\succ\gamma\succ\mu\), the collision hypothesis in Theorem~\ref{thm:main} can be checked by a finite univariate computation.

\subsection{The degree-nine example revisited}

Consider again
\begin{equation*}
\lambda=(9,0,0),
\qquad
\gamma=(7,1,1),
\qquad
\mu=(3,3,3).
\end{equation*}
Section~2 gave an independent three-variable proof.  The collision criterion reduces the same inequality to a one-variable calculation.  Since
\begin{equation*}
P_{\lambda,\gamma,\mu}(t,1,1)=2Q(t),
\end{equation*}
we obtain
\begin{align*}
Q(t)
&=t^9-2t^7+3t^3-4t+2\\
&=(t-1)^2H(t),
\end{align*}
where
\begin{equation*}
H(t)=t^7+2t^6+t^5-t^3-2t^2+2.
\end{equation*}
To apply the procedure of Section~4.3 explicitly, let \(S_0=H\), \(S_1=H'\), and successively take the negative Euclidean remainders.  After multiplying individual members by positive constants, the Sturm sequence is
\begin{align*}
S_0(t)&=t^7+2t^6+t^5-t^3-2t^2+2,\\
S_1(t)&=7t^6+12t^5+5t^4-3t^2-4t,\\
S_2(t)&=10t^5+10t^4+28t^3+64t^2-8t-98,\\
S_3(t)&=98t^4+294t^3+147t^2-343t-245,\\
S_4(t)&=-73t^3-129t^2+53t+148,\\
S_5(t)&=-4949t^2+60319t+49,\\
S_6(t)&=5917t-66,\\
S_7(t)&=-1.
\end{align*}
The signs at the two endpoints of \((0,\infty)\) are
\begin{equation*}
\begin{array}{c|cccccccc|c}
 &S_0&S_1&S_2&S_3&S_4&S_5&S_6&S_7&V\\ \hline
0^+&+&-&-&-&+&+&-&-&3\\
+\infty&+&+&+&+&-&-&+&-&3
\end{array}
\end{equation*}
Thus Sturm's theorem gives
\begin{equation*}
N_{(0,\infty)}(H)=V(0^+)-V(+\infty)=0.
\end{equation*}
Since \(H(0)=2>0\), it follows that
\begin{equation*}
H(t)>0
\qquad(t>0).
\end{equation*}
For this particular polynomial the same positivity can also be seen directly.  For \(t\ge1\),
\begin{equation*}
H(t)
=(t^7-t^3)+2(t^6-t^2)+t^5+2>0,
\end{equation*}
while for \(0<t\le1\),
\begin{equation*}
H(t)=(1-t^2)(2-t^3)+2t^6+t^7>0.
\end{equation*}
Equivalently, on \([0,1]\),
\begin{align*}
H(t)={}&2(1-t)^7+14t(1-t)^6+40t^2(1-t)^5+59t^3(1-t)^4\\
&+46t^4(1-t)^3+17t^5(1-t)^2+4t^6(1-t)+3t^7,
\end{align*}
whose coefficients are all positive.  Hence
\begin{equation*}
Q(t)\ge0
\qquad(t>0).
\end{equation*}
The full collision criterion therefore gives
\begin{equation*}
J_{(9,0,0)}(x,y,z)+J_{(3,3,3)}(x,y,z)
\ge
2J_{(7,1,1)}(x,y,z)
\end{equation*}
for all positive \(x,y,z\).

Thus a nontrivial degree-nine inequality in three variables is reduced to the positivity of a single one-variable polynomial.  This is the practical advantage of the collision criterion: once the boundary inequality is verified, no separate interior argument is required.

\section{Canonical Geometry of a Three-Part Dominance Chain}
\label{sec:canonical-geometry}

We now develop the long-proof reduction for the three-variable theorem. Throughout Sections~\ref{sec:canonical-geometry}--\ref{sec:global-assembly}, let
\begin{equation*}
P(x,y,z)
=
J_\lambda(x,y,z)+J_\mu(x,y,z)-2J_\gamma(x,y,z),
\end{equation*}
where
\begin{equation*}
\lambda\succ\gamma\succ\mu
\end{equation*}
are equal-degree nonnegative integer partitions with at most three parts, and assume
\begin{equation*}
P(t,1,1)\ge0
\qquad(t>0).
\end{equation*}

\subsection{Common translation}

\begin{lemma}[Common translation]
\label{lem:common-translation}
There is an integer \(m\ge0\) such that, after replacing each exponent vector \(\eta\) by
\begin{equation*}
\eta-m(1,1,1),
\end{equation*}
the upper endpoint has the form
\begin{equation*}
\lambda=(p,q,0),
\qquad
p\ge q\ge0.
\end{equation*}
The replacement preserves both collision positivity and global positivity.
\end{lemma}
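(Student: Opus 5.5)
Take \(m=\min\{\lambda_3,\gamma_3,\mu_3\}\), the smallest part occurring anywhere in the chain, subtract \(m(1,1,1)\) from all three exponent vectors, and check three things: the shifted vectors are still valid data, the upper endpoint has the stated shape, and both positivity conditions are unchanged.

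\textbf{Integrality and ordering.} Since all partitions are nonnegative integers, \(m\) is a nonnegative integer, and every shifted part \(\eta_i-m\) is a nonnegative integer. Subtracting a constant vector preserves decreasing order and changes every partial sum \(\sum_{i\le k}\eta_i\) by the same amount \(km\). All degrees drop by \(3m\), so degrees stay equal, and the dominance relations \(\lambda\succ\gamma\succ\mu\), including strictness, are preserved.

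\textbf{Shape of the upper endpoint.} We need the minimum to be attained at \(\lambda_3\), so that the shifted \(\lambda\) ends in \(0\). The dominance \(\lambda\succcurlyeq\eta\) for \(\eta\in\{\gamma,\mu\}\) gives \(\sum_{i\le2}\lambda_i\ge\sum_{i\le2}\eta_i\). With equal totals this is \(\lambda_3\le\eta_3\). Hence \(m=\lambda_3\), and the new upper endpoint is \((p,q,0)\) with \(p=\lambda_1-m\ge q=\lambda_2-m\ge0\).

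\textbf{Invariance of positivity.} Directly from the definition of the labeled orbit sum,
\begin{equation*}
J_{\eta}(x,y,z)=(xyz)^m J_{\eta-m(1,1,1)}(x,y,z),
\end{equation*}
because every permuted monomial contains each variable to a power at least \(m\). Therefore
\begin{equation*}
P_{\lambda,\gamma,\mu}=(xyz)^m P_{\lambda',\gamma',\mu'},
\end{equation*}
where primes denote the shifted vectors. Since \((xyz)^m>0\) on the open positive cone, global positivity on \(x,y,z>0\) is equivalent for the two chains. Specializing to \((t,1,1)\), the factor is \(t^m>0\), so collision positivity \(Q(t)\ge0\) for \(t>0\) is also equivalent.

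There is no real obstacle here. The only point needing care is the observation that \(\lambda_3\) is the minimal third part, which comes from the \(k=2\) majorization inequality together with equal degree.
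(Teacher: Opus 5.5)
Your proposal is correct and follows essentially the same route as the paper. Both take $m=\lambda_3$: you define $m$ as the minimum of the third parts and then show, from the $k=2$ majorization inequality and equal degree, that this minimum is $\lambda_3$. Both then conclude via $J_{\eta}=(xyz)^m J_{\eta-m(1,1,1)}$, which gives the factor $t^m$ on the collision locus.
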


\begin{proof}
Take \(m=\lambda_3\).  Equal degree and
\begin{equation*}
\lambda\succ\gamma\succ\mu
\end{equation*}
imply
\begin{equation*}
\lambda_3\le\gamma_3\le\mu_3.
\end{equation*}
Thus subtracting \(\lambda_3\) from every coordinate leaves all three exponent vectors nonnegative.  Common translation leaves every majorization partial-sum difference unchanged, so the dominance chain is preserved.  Finally,
\begin{equation*}
J_{\eta+m(1,1,1)}(x,y,z)=(xyz)^mJ_\eta(x,y,z),
\end{equation*}
so the defect is multiplied by the positive factor \((xyz)^m\), and its collision restriction by \(t^m\).  Neither sign condition changes.
\end{proof}

\subsection{Canonical transfer and lattice coordinates}
\label{sec:canonical-coordinates}

The remainder of the proof repeatedly compares the three exponent vectors along the chain
\begin{equation*}
\lambda\succ\gamma\succ\mu.
\end{equation*}
We therefore introduce coordinates that record the actual changes of the three coordinates.  There are two closely related descriptions.  The variables
\begin{equation*}
(\alpha,\delta;a,c)
\end{equation*}
are adapted to the two successive transfers
\(\lambda\to\gamma\) and \(\gamma\to\mu\), while
\begin{equation*}
(r,s,\kappa)
\end{equation*}
are adapted to the lattice moves used in Section~\ref{sec:monge}.

After Lemma~\ref{lem:common-translation}, write
\begin{equation*}
\lambda=(p,q,0),
\qquad
d:=p-q.
\end{equation*}
Thus \(d=\lambda_1-\lambda_2\) is the upper adjacent gap.

For the first step \(\lambda\to\gamma\), define
\begin{equation*}
\alpha:=\lambda_1-\gamma_1,
\qquad
\delta:=\gamma_3-\lambda_3=\gamma_3.
\end{equation*}
Since total degree is preserved, the second coordinate changes by
\begin{equation*}
\gamma_2-\lambda_2=\alpha-\delta.
\end{equation*}
Hence
\begin{equation*}
\gamma
=
(p-\alpha,\ q+\alpha-\delta,\ \delta).
\end{equation*}
Thus \(\alpha\) is the total amount removed from the first coordinate in passing from
\(\lambda\) to \(\gamma\), while \(\delta\) is the amount appearing in the third coordinate.

For the second step \(\gamma\to\mu\), define
\begin{equation*}
a:=\gamma_1-\mu_1,
\qquad
c:=\mu_3-\gamma_3.
\end{equation*}
Again using equality of total degrees,
\begin{equation*}
\mu_2-\gamma_2=a-c,
\end{equation*}
and therefore
\begin{equation*}
\mu
=
(p-\alpha-a,\ q+\alpha-\delta+a-c,\ \delta+c).
\end{equation*}
The cumulative changes from the upper endpoint to the lower endpoint are
\begin{equation*}
\beta:=\alpha+a=\lambda_1-\mu_1,
\qquad
\kappa:=\delta+c=\mu_3-\lambda_3=\mu_3.
\end{equation*}
Consequently
\begin{equation*}
\mu
=
(p-\beta,\ q+\beta-\kappa,\ \kappa).
\end{equation*}

For the moving middle partition it is more convenient to use the coordinate changes themselves:
\begin{equation*}
r:=\gamma_2-\lambda_2=\alpha-\delta,
\qquad
s:=\gamma_3=\delta.
\end{equation*}
Then
\begin{equation*}
\alpha=r+s
\end{equation*}
and
\begin{equation*}
\gamma_{r,s}
=
(p-r-s,\ q+r,\ s).
\end{equation*}
These variables have a direct lattice interpretation.  Increasing \(r\) by one transfers one unit from the first coordinate of \(\gamma\) to its second coordinate, while increasing \(s\) by one transfers one unit from the first coordinate to the third.  With \(\beta\) fixed, increasing \(\kappa\) by one transfers one unit in \(\mu\) from the second coordinate to the third.  These are exactly the three neighboring directions used in Section~\ref{sec:monge}.

Three coordinate gaps will occur repeatedly:
\begin{equation*}
U:=\gamma_1-\gamma_2=d-2r-s,
\end{equation*}
\begin{equation*}
H:=\gamma_1-\gamma_3=p-r-2s,
\end{equation*}
and
\begin{equation*}
V:=\mu_2-\mu_3=q+\beta-2\kappa.
\end{equation*}
We also write
\begin{equation*}
\nu:=\mu_1-\mu_2=d-2\beta+\kappa.
\end{equation*}
Thus \(U\) and \(H\) measure the two indicated separations inside the middle partition, while \(V\) and \(\nu\) are the two lower adjacent gaps.  In particular, the inequalities \(U\ge0\), \(H\ge0\), \(V\ge0\), and \(\nu\ge0\) are not auxiliary analytic assumptions; they are consequences of the weak ordering of the corresponding exponent triples.

For reference, the canonical parameters may therefore be read directly from the three partitions:
\begin{equation*}
\begin{array}{c|c}
\text{parameter} & \text{coordinate meaning}\\ \hline
d & \lambda_1-\lambda_2\\
\alpha & \lambda_1-\gamma_1\\
\delta=s & \gamma_3\\
r & \gamma_2-\lambda_2\\
a & \gamma_1-\mu_1\\
c & \mu_3-\gamma_3\\
\beta & \lambda_1-\mu_1\\
\kappa & \mu_3\\
U & \gamma_1-\gamma_2\\
H & \gamma_1-\gamma_3\\
V & \mu_2-\mu_3\\
\nu & \mu_1-\mu_2
\end{array}
\end{equation*}

Finally, for \(m\ge1\), write
\begin{equation*}
A_m(t):=1+t+\cdots+t^{m-1},
\qquad
A_0(t):=0.
\end{equation*}
The notation \(A_m\) will always denote this geometric-sum polynomial.

\subsection{The midpoint rectangle}

\begin{lemma}[Canonical phase decomposition]
\label{lem:phase-decomposition}
One has
\begin{equation*}
\frac{\lambda+\mu}{2}\succcurlyeq\gamma
\end{equation*}
if and only if
\begin{equation*}
a\le\alpha,
\qquad
c\le\delta.
\end{equation*}
Consequently \(P\ge0\) in this rectangle.
\end{lemma}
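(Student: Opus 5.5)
The plan is to compute the midpoint explicitly in the canonical coordinates, reduce the majorization to two partial-sum inequalities, and then obtain \(P\ge0\) from a termwise AM--GM bound combined with Muirhead's inequality (Theorem~\ref{thm:muirhead}).

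\emph{Step 1: the midpoint in coordinates.} From the formulas of Section~\ref{sec:canonical-coordinates},
\begin{equation*}
\frac{\lambda+\mu}{2}=\Bigl(p-\tfrac{\beta}{2},\ q+\tfrac{\beta-\kappa}{2},\ \tfrac{\kappa}{2}\Bigr),
\qquad
\gamma=(p-\alpha,\ q+\alpha-\delta,\ \delta).
\end{equation*}
Both \(\lambda\) and \(\mu\) are weakly decreasing, so their average is also weakly decreasing, and \(\gamma\) is weakly decreasing as well. The two vectors have the same total. Hence the dominance \(\frac{\lambda+\mu}{2}\succcurlyeq\gamma\) is equivalent to the partial-sum inequalities for \(k=1\) and \(k=2\), applied directly to the displayed coordinates with no rearrangement needed.

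\emph{Step 2: the two partial-sum conditions.} For \(k=1\) the condition is \(p-\beta/2\ge p-\alpha\), that is, \(\beta\le2\alpha\). Since \(\beta=\alpha+a\), this is exactly \(a\le\alpha\). For \(k=2\), equal totals let us rewrite the condition as a comparison of third coordinates: \(\kappa/2\le\delta\). Since \(\kappa=\delta+c\), this is exactly \(c\le\delta\). This proves the equivalence.

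\emph{Step 3: positivity in the rectangle.} Pair the terms of \(J_\lambda\) and \(J_\mu\) by the same permutation \(\sigma\) and apply AM--GM for \(x,y,z>0\):
\begin{equation*}
\prod_i x_i^{\lambda_{\sigma(i)}}+\prod_i x_i^{\mu_{\sigma(i)}}\ge 2\prod_i x_i^{(\lambda_{\sigma(i)}+\mu_{\sigma(i)})/2}.
\end{equation*}
Summing over \(\sigma\) gives \(J_\lambda+J_\mu\ge2J_{(\lambda+\mu)/2}\). Then Muirhead's inequality applied to \(\frac{\lambda+\mu}{2}\succcurlyeq\gamma\) gives \(J_{(\lambda+\mu)/2}\ge J_\gamma\), and combining the two inequalities yields \(P\ge0\).

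\emph{Expected obstacle.} The only delicate point is that the midpoint may have half-integer entries. Muirhead's inequality as stated in Theorem~\ref{thm:muirhead} must therefore be used for real nonnegative exponents on positive variables. The classical proofs (via doubly stochastic matrices, or via Hardy--Littlewood--P\'olya) cover this case, as the cited references \cite{HLP1952,MOA2011} confirm. Alternatively, one can clear the denominator 2 by the substitution \(x_i\mapsto x_i^2\) and apply the integer version.
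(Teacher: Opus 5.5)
Your proposal is correct and follows the paper's proof. It uses the same two partial-sum computations: first coordinates give \(a\le\alpha\), and third coordinates, via equal totals, give \(c\le\delta\). It then applies termwise AM--GM over the labeled permutations and Muirhead to \(\frac{\lambda+\mu}{2}\succcurlyeq\gamma\). Two of your points are left implicit in the paper: that both vectors are already weakly decreasing, and that the half-integer exponents of the midpoint are handled by real-exponent Muirhead or by the substitution \(x_i\mapsto x_i^2\).
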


\begin{proof}
The first partial-sum condition is
\begin{equation*}
\frac{p+(p-\alpha-a)}2\ge p-\alpha,
\end{equation*}
equivalent to \(a\le\alpha\). Equality of total degrees reduces the second condition to
\begin{equation*}
\frac{\delta+c}{2}\le\delta,
\end{equation*}
equivalent to \(c\le\delta\).

We first prove the midpoint inequality directly.  For each labeled permutation
\begin{equation*}
\sigma\in S_3,
\end{equation*}
set
\begin{equation*}
M_{\lambda,\sigma}
=
\prod_{i=1}^{3}x_i^{\lambda_{\sigma(i)}},
\qquad
M_{\mu,\sigma}
=
\prod_{i=1}^{3}x_i^{\mu_{\sigma(i)}}.
\end{equation*}
By the arithmetic--geometric mean inequality,
\begin{align*}
\frac{M_{\lambda,\sigma}+M_{\mu,\sigma}}{2}
&\ge
\sqrt{M_{\lambda,\sigma}M_{\mu,\sigma}}\\
&=
\prod_{i=1}^{3}
 x_i^{(\lambda_{\sigma(i)}+\mu_{\sigma(i)})/2}.
\end{align*}
Summing over all \(\sigma\in S_3\) gives
\begin{equation*}
\frac{J_\lambda+J_\mu}{2}
\ge
J_{(\lambda+\mu)/2}.
\end{equation*}
Since the first part of the proof shows
\begin{equation*}
\frac{\lambda+\mu}{2}\succcurlyeq\gamma,
\end{equation*}
Muirhead's inequality yields
\begin{equation*}
J_{(\lambda+\mu)/2}
\ge
J_\gamma.
\end{equation*}
Therefore
\begin{equation*}
\frac{J_\lambda+J_\mu}{2}
\ge
J_{(\lambda+\mu)/2}
\ge
J_\gamma,
\end{equation*}
and hence \(P\ge0\) in the midpoint rectangle.
\end{proof}

The inequalities in Lemma~\ref{lem:phase-decomposition} divide the remaining proof into four regions. We use the following terminology throughout:
\begin{align*}
a\le\alpha,\quad c\le\delta
&\qquad\text{midpoint region},\\
a>\alpha,\quad c\le\delta
&\qquad\text{one-sided exterior region},\\
a\le\alpha,\quad c>\delta
&\qquad\text{reciprocal one-sided exterior region},\\
a>\alpha,\quad c>\delta
&\qquad\text{double-exterior region}.
\end{align*}
The midpoint region has just been settled. The next lemma shows that the two one-sided exterior regions are equivalent, so it suffices later to treat the orientation \(a>\alpha, c\le\delta\).

\subsection{Reciprocal complement}

\begin{lemma}[Reciprocal complement]
\label{lem:reciprocal}
Fix \(K\) at least as large as every exponent and define
\begin{equation*}
(\eta_1,\eta_2,\eta_3)^\vee
=
(K-\eta_3,\ K-\eta_2,\ K-\eta_1).
\end{equation*}
Then
\begin{equation*}
J_{\eta^\vee}(x,y,z)
=
(xyz)^KJ_\eta(x^{-1},y^{-1},z^{-1}).
\end{equation*}
Consequently
\begin{equation*}
P^\vee(x,y,z)
=
(xyz)^KP(x^{-1},y^{-1},z^{-1}),
\qquad
P^\vee(t,1,1)
=
t^KP(t^{-1},1,1).
\end{equation*}
In canonical transfer coordinates reciprocity exchanges
\begin{equation*}
(\alpha,\delta;a,c)
\longleftrightarrow
(\delta,\alpha;c,a)
\end{equation*}
up to common translation.
\end{lemma}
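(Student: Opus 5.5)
The plan is to verify the orbit-sum identity monomial by monomial, deduce the statements about \(P^\vee\) by linearity, and then read off the new canonical parameters directly from the coordinate table of Section~\ref{sec:canonical-coordinates}.

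\emph{The orbit identity.} Set \(w_0=(3,2,1)\in S_3\), the order-reversing permutation. By definition \((\eta^\vee)_j=K-\eta_{w_0(j)}\). For each \(\sigma\in S_3\),
\begin{equation*}
\prod_i x_i^{(\eta^\vee)_{\sigma(i)}}
=\prod_i x_i^{K}\,x_i^{-\eta_{w_0\sigma(i)}}
=(xyz)^K\prod_i (x_i^{-1})^{\eta_{w_0\sigma(i)}} .
\end{equation*}
The map \(\sigma\mapsto w_0\sigma\) is a bijection of \(S_3\). Summing over all \(\sigma\) therefore gives \(J_{\eta^\vee}(x,y,z)=(xyz)^KJ_\eta(x^{-1},y^{-1},z^{-1})\). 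The hypothesis on \(K\) only ensures that \(\eta^\vee\) has nonnegative entries; \(\eta^\vee\) is weakly decreasing because \(\eta\) is.

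\emph{The identities for \(P^\vee\).} Here \(P^\vee\) means the defect built from the chain \(\lambda^\vee,\gamma^\vee,\mu^\vee\) with the same \(K\). Applying the orbit identity to each of the three terms \(J_{\lambda^\vee}+J_{\mu^\vee}-2J_{\gamma^\vee}\) gives \(P^\vee(x,y,z)=(xyz)^KP(x^{-1},y^{-1},z^{-1})\). Substituting \((t,1,1)\) then gives \(P^\vee(t,1,1)=t^KP(t^{-1},1,1)\).

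\emph{Majorization is preserved.} All three vectors have total degree \(3K-N\), so the degrees still agree. The first partial sums satisfy \((\eta^\vee)_1=K-\eta_3\), and \(\lambda\succ\gamma\) together with equal degree gives \(\lambda_3\le\gamma_3\). The second partial sums satisfy \((\eta^\vee)_1+(\eta^\vee)_2=2K-\eta_2-\eta_3=K+\eta_1-(N-K)\)-type expressions that are increasing in \(\eta_1\). Hence \(\lambda^\vee\succcurlyeq\gamma^\vee\succcurlyeq\mu^\vee\), strictly when the original chain is strict, because \(^\vee\) is injective.

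\emph{Transfer coordinates.} I read the new parameters off the table:
\begin{align*}
\alpha^\vee&=\lambda^\vee_1-\gamma^\vee_1=\gamma_3-\lambda_3=\delta,\\
a^\vee&=\gamma^\vee_1-\mu^\vee_1=\mu_3-\gamma_3=c,\\
\delta^\vee&=\gamma^\vee_3-\lambda^\vee_3=\lambda_1-\gamma_1=\alpha,\\
c^\vee&=\mu^\vee_3-\gamma^\vee_3=\gamma_1-\mu_1=a.
\end{align*}
The table defines \(\delta\) and \(c\) as differences \(\gamma_3-\lambda_3\) and \(\mu_3-\gamma_3\) before normalizing \(\lambda_3=0\). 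After reciprocation, \(\lambda^\vee_3=K-\lambda_1\) is in general nonzero, so Lemma~\ref{lem:common-translation} must be applied again. Translation leaves all these differences unchanged, which is what ``up to common translation'' means.

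The only step needing care is this last one: the table's definitions must be used in their translation-invariant difference form rather than the normalized form \(\delta=\gamma_3\). With that reading, the exchange \((\alpha,\delta;a,c)\leftrightarrow(\delta,\alpha;c,a)\) is a direct substitution.
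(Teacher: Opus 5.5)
Your proposal is correct and follows the same approach as the paper. You verify the orbit identity termwise through the bijection \(\sigma\mapsto w_0\sigma\), preserve majorization through the first-part inequality and the reversed last-part inequality, and obtain the exchange by direct substitution using the translation-invariant difference forms of \(\alpha,\delta,a,c\); your second partial sum is exactly \(2K-N+\eta_1\), so the ``-type'' hedge is unnecessary, and you simply supply more detail than the paper's terse argument.
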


\begin{proof}
The orbit identity is termwise.  If \(\eta_1\ge\eta_2\ge\eta_3\), then
\begin{equation*}
K-\eta_3\ge K-\eta_2\ge K-\eta_1,
\end{equation*}
so reciprocal complement again produces a partition.  For equal-total-degree triples, majorization is equivalent to the first-part inequality together with the reversed last-part inequality.  Complementing and reversing the coordinates preserves these inequalities, hence
\begin{equation*}
\eta\succcurlyeq\zeta
\quad\Longleftrightarrow\quad
\eta^\vee\succcurlyeq\zeta^\vee.
\end{equation*}
Therefore the dominance chain is preserved.  The two displayed identities for \(P^\vee\) preserve collision and global positivity, and direct substitution yields
\begin{equation*}
(\alpha,\delta;a,c)\longleftrightarrow(\delta,\alpha;c,a)
\end{equation*}
up to common translation.
\end{proof}

\section{Lattice Structure and Reduction to Minimal Collision-Positive States}
\label{sec:monge}

We now vary the middle partition and the lower endpoint while keeping the upper endpoint
\begin{equation*}
\lambda=(p,q,0)
\end{equation*}
fixed.  For integers \(r,s,\beta,\kappa\), set
\begin{equation*}
\gamma_{r,s}
=
(p-r-s,\ q+r,\ s),
\end{equation*}
and
\begin{equation*}
\mu_\kappa
=
(p-\beta,\ q+\beta-\kappa,\ \kappa).
\end{equation*}
For every state for which these triples are admissible, define
\begin{equation*}
P_{r,s,\kappa}
=
J_\lambda+J_{\mu_\kappa}-2J_{\gamma_{r,s}},
\end{equation*}
and define the collision quotient \(Q_{r,s,\kappa}\) by
\begin{equation*}
P_{r,s,\kappa}(t,1,1)
=
2(t-1)^2Q_{r,s,\kappa}(t).
\end{equation*}
The purpose of this section is to identify the monotone lattice directions of
\(P_{r,s,\kappa}\) and \(Q_{r,s,\kappa}\), and then to reduce the proof to states that are minimal with respect to those directions.

\subsection{Legal states and finite-difference identities}

\begin{definition}[Legal state]
\label{def:legal-state}
A triple \((r,s,\kappa)\) is called \emph{legal}, for fixed \((p,q,\beta)\), if
\begin{equation*}
\lambda\succcurlyeq\gamma_{r,s}\succcurlyeq\mu_\kappa
\end{equation*}
and both \(\gamma_{r,s}\) and \(\mu_\kappa\) are nonnegative weakly decreasing triples.
Equivalently,
\begin{align*}
s&\ge0,
& r+s&\ge0,
& q+r-s&\ge0,
& d-2r-s&\ge0,\\
\beta-r-s&\ge0,
& \kappa-s&\ge0,
& d-2\beta+\kappa&\ge0,
& q+\beta-2\kappa&\ge0.
\end{align*}
A neighboring state is called legal if the shifted triple satisfies the same conditions.
\end{definition}

The first four inequalities state that \(\gamma_{r,s}\) is a partition dominated by \(\lambda\); the next two are exactly the two partial-sum inequalities for
\(\gamma_{r,s}\succcurlyeq\mu_\kappa\); and the last two state that \(\mu_\kappa\) is weakly decreasing.  Notice in particular that legality is a property of the lattice state itself, not an additional hypothesis introduced in the finite-difference formulas below.

For any lattice quantity \(F_{r,s,\kappa}\), define the forward difference operators by
\begin{align*}
\Delta_rF_{r,s,\kappa}
&=
F_{r+1,s,\kappa}-F_{r,s,\kappa},\\
\Delta_sF_{r,s,\kappa}
&=
F_{r,s+1,\kappa}-F_{r,s,\kappa},\\
\Delta_\kappa F_{r,s,\kappa}
&=
F_{r,s,\kappa+1}-F_{r,s,\kappa}.
\end{align*}
Thus
\begin{equation*}
\Delta_r\Delta_sF_{r,s,\kappa}
=
F_{r+1,s+1,\kappa}
-F_{r+1,s,\kappa}
-F_{r,s+1,\kappa}
+F_{r,s,\kappa}.
\end{equation*}
The positive monotonicity direction for the lower endpoint is \(-\Delta_\kappa\), because increasing \(\kappa\) moves \(\mu_\kappa\) downward in dominance order.

\begin{lemma}[Finite-difference monotonicity identities]
\label{lem:finite-difference-monotonicity}
Let \((r,s,\kappa)\) be legal.
Whenever the indicated neighboring state is also legal, one has
\begin{align*}
\Delta_rP_{r,s,\kappa}
&=
2\bigl(J_{\gamma_{r,s}}-J_{\gamma_{r+1,s}}\bigr)
\ge0,\\
\Delta_sP_{r,s,\kappa}
&=
2\bigl(J_{\gamma_{r,s}}-J_{\gamma_{r,s+1}}\bigr)
\ge0,
\end{align*}
and
\begin{equation*}
-\Delta_\kappa P_{r,s,\kappa}
=
J_{\mu_\kappa}-J_{\mu_{\kappa+1}}
\ge0.
\end{equation*}
At the collision quotient level,
\begin{align*}
\Delta_rQ_{r,s,\kappa}(t)
&=
2t^{q+r}A_{d-2r-s-1}(t),\\
\Delta_sQ_{r,s,\kappa}(t)
&=
2t^sA_{p-r-2s-1}(t),\\
-\Delta_\kappa Q_{r,s,\kappa}(t)
&=
t^\kappa A_{q+\beta-2\kappa-1}(t).
\end{align*}
Moreover, whenever the corresponding two-dimensional configuration is legal,
\begin{equation*}
\Delta_r\Delta_sQ_{r,s,\kappa}(t)
=
-2t^{p-r-s-2},
\end{equation*}
whereas the lower-endpoint direction is separable from the two middle-partition directions:
\begin{equation*}
\Delta_\kappa\Delta_rQ_{r,s,\kappa}
=
\Delta_\kappa\Delta_sQ_{r,s,\kappa}
=
0.
\end{equation*}
\end{lemma}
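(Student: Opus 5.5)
The plan is to compute everything directly from the explicit form of the three exponent triples and the labeled-orbit convention. The one structural input is that a single lattice move changes exactly one of the three orbit sums in \(P_{r,s,\kappa}\), and changes it by moving one unit between two coordinates.

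First I would prove the three inequalities at the level of \(P\). A move in \(r\) or \(s\) changes only \(\gamma_{r,s}\), and \(\gamma_{r,s}\) enters \(P_{r,s,\kappa}\) with coefficient \(-2\). Hence \(\Delta_rP=2(J_{\gamma_{r,s}}-J_{\gamma_{r+1,s}})\) and \(\Delta_sP=2(J_{\gamma_{r,s}}-J_{\gamma_{r,s+1}})\). A move in \(\kappa\) changes only \(\mu_\kappa\), which has coefficient \(+1\), so \(-\Delta_\kappa P=J_{\mu_\kappa}-J_{\mu_{\kappa+1}}\).

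Nonnegativity then comes from Muirhead's inequality (Theorem~\ref{thm:muirhead}). If both states are legal, then both triples are partitions. Moreover, \(\gamma_{r+1,s}\) is obtained from \(\gamma_{r,s}\) by moving one unit from a larger coordinate to a smaller one. The same holds for \(\gamma_{r,s+1}\), and for \(\mu_{\kappa+1}\) relative to \(\mu_\kappa\). Such a transfer lowers every partial sum weakly, so it moves the triple downward in dominance order.

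Next I would pass to the collision restriction, using \(J_\eta(t,1,1)=2(t^{\eta_1}+t^{\eta_2}+t^{\eta_3})\). For the \(r\)-move, only the first two exponents of \(\gamma\) change, and
\begin{equation*}
\Delta_rP(t,1,1)=4\bigl(t^{p-r-s}-t^{p-r-s-1}-t^{q+r+1}+t^{q+r}\bigr)
=4(t-1)\bigl(t^{p-r-s-1}-t^{q+r}\bigr).
\end{equation*}
The exponent gap is \(p-r-s-1-(q+r)=U-1=d-2r-s-1\). Legality of the neighbour gives \(U-2\ge0\) for \(\gamma_{r+1,s}\), so this gap is nonnegative. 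Writing \(t^{U-1}-1=(t-1)A_{U-1}(t)\) yields \(4(t-1)^2t^{q+r}A_{d-2r-s-1}\). Dividing by \(2(t-1)^2\) gives the stated formula for \(\Delta_rQ\). Because both sides are polynomials, the identity extends across \(t=1\).

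The \(s\)-move is handled the same way with the pair of exponents \(p-r-s\) and \(s\), giving gap \(H-1=p-r-2s-1\). For the \(\kappa\)-move, the lower endpoint's second and third exponents \(q+\beta-\kappa\) and \(\kappa\) exchange a unit. The coefficient is \(1\) instead of \(-2\), and this produces \(t^\kappa A_{V-1}\) with \(V-1=q+\beta-2\kappa-1\).

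Finally come the mixed differences. Applying \(\Delta_s\) to \(2t^{q+r}A_{d-2r-s-1}\) leaves the prefactor unchanged and lowers the index by one. Since \(A_{m-1}-A_m=-t^{m-1}\), this gives \(-2t^{q+r+d-2r-s-2}=-2t^{p-r-s-2}\). Legality of the two-dimensional configuration guarantees \(m\ge1\), so the convention \(A_0=0\) is used consistently.

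Separability is immediate. The formulas for \(\Delta_rQ\) and \(\Delta_sQ\) do not involve \(\kappa\), because \(\mu_\kappa\) cancels in any \(r\)- or \(s\)-difference. Hence applying \(\Delta_\kappa\) to either of them gives zero.

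I expect the only real obstacle to be bookkeeping at the boundary of the legal region. Each identity must invoke exactly the legality inequality that makes its geometric-sum index nonnegative: \(U\ge1\), \(H\ge1\), \(V\ge1\), and \(m\ge1\) in the mixed case. The division by \((t-1)^2\) must also be read as an identity of polynomials.
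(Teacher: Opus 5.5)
Your proposal is correct and follows essentially the same route as the paper. Both use Muirhead for the three unit transfers. Both factor each collision difference as \((t-1)^2\) times a shifted geometric sum; the paper does this through the single identity \(t^u+t^v-t^{u-1}-t^{v+1}=(t-1)^2t^vA_{u-v-1}(t)\), where you do it case by case. The mixed difference follows from \(A_{m-1}-A_m=-t^{m-1}\), and separability from the fact that the \(r\)- and \(s\)-formulas do not depend on \(\kappa\).
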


\begin{proof}
The three full-polynomial inequalities follow from Muirhead's inequality.  Indeed,
\(\gamma_{r,s}\) dominates \(\gamma_{r+1,s}\) by one unit transfer from the first coordinate to the second,
\(\gamma_{r,s}\) dominates \(\gamma_{r,s+1}\) by one unit transfer from the first coordinate to the third, and
\(\mu_\kappa\) dominates \(\mu_{\kappa+1}\) by one unit transfer from the second coordinate to the third.

Because the orbit sums are labeled, for any exponent triple \((u,v,w)\),
\begin{equation*}
J_{(u,v,w)}(t,1,1)
=
2(t^u+t^v+t^w).
\end{equation*}
For integers \(u>v\),
\begin{align*}
t^u+t^v-t^{u-1}-t^{v+1}
&=(t-1)(t^{u-1}-t^v)\\
&=(t-1)^2t^vA_{u-v-1}(t).
\end{align*}
Applying this identity to the three unit transfers above and dividing by
\(2(t-1)^2\) gives the three formulas for \(Q\).

For the mixed difference, use the first collision finite-difference formula:
\begin{align*}
\Delta_s\Delta_rQ_{r,s,\kappa}(t)
&=
2t^{q+r}
\left(
A_{d-2r-s-2}(t)-A_{d-2r-s-1}(t)
\right)\\
&=
-2t^{q+r}t^{d-2r-s-2}\\
&=
-2t^{p-r-s-2}.
\end{align*}
Finally, the formulas for \(\Delta_rQ\) and \(\Delta_sQ\) do not involve \(\kappa\), which gives
\begin{equation*}
\Delta_\kappa\Delta_rQ
=
\Delta_\kappa\Delta_sQ
=0.
\end{equation*}
\end{proof}

For each fixed \(t>0\), Lemma~\ref{lem:finite-difference-monotonicity} shows that
\(Q_{r,s,\kappa}(t)\) is nondecreasing in \(r\) and \(s\), and nonincreasing in \(\kappa\), along legal neighboring states.  In addition,
\begin{equation*}
\Delta_r\Delta_sQ_{r,s,\kappa}(t)<0.
\end{equation*}
We shall refer to this sign condition as the \emph{discrete Monge property} of the \((r,s)\)-array.  The vanishing mixed differences with \(\kappa\) mean that the lower-endpoint increment is independent of the two middle-partition increments.  These formulas, rather than any separate geometric slogan, are the lattice structure used in the reduction below.

\subsection{Reduction to minimal collision-positive states}

We write
\begin{equation*}
Q_{r,s,\kappa}\ge0
\end{equation*}
to mean
\begin{equation*}
Q_{r,s,\kappa}(t)\ge0
\qquad\text{for every }t>0.
\end{equation*}
Similarly,
\begin{equation*}
Q_{r,s,\kappa}\not\ge0
\end{equation*}
means that \(Q_{r,s,\kappa}(t)<0\) for at least one \(t>0\).

\begin{definition}[Predecessors and minimal collision-positive states]
\label{def:minimal-collision-positive}
For a legal state \((r,s,\kappa)\), its legal predecessors are the states
\begin{equation*}
(r-1,s,\kappa),
\qquad
(r,s-1,\kappa),
\qquad
(r,s,\kappa+1),
\end{equation*}
whenever they are legal.
A legal state \((r,s,\kappa)\) is called \emph{minimal collision-positive} if
\begin{equation*}
Q_{r,s,\kappa}\ge0
\end{equation*}
and every legal predecessor fails collision positivity.
\end{definition}

The terminology is consistent with the monotonicity directions in Lemma~\ref{lem:finite-difference-monotonicity}: moving from a predecessor to \((r,s,\kappa)\) adds a globally nonnegative full-polynomial increment.

\begin{theorem}[Minimal-State Reduction]
\label{thm:minimal-state-reduction}
Let \((r,s,\kappa)\) be a collision-positive legal state.  Then there exists a minimal collision-positive legal state
\begin{equation*}
(r_*,s_*,\kappa_*)
\end{equation*}
with
\begin{equation*}
r_*\le r,
\qquad
s_*\le s,
\qquad
\kappa_*\ge\kappa,
\end{equation*}
such that
\begin{equation*}
P_{r,s,\kappa}(x,y,z)
\ge
P_{r_*,s_*,\kappa_*}(x,y,z)
\qquad(x,y,z>0).
\end{equation*}
Consequently, if
\begin{equation*}
P_{r_*,s_*,\kappa_*}(x,y,z)\ge0
\qquad(x,y,z>0)
\end{equation*}
for every minimal collision-positive legal state, then the same conclusion holds for every collision-positive legal state.
\end{theorem}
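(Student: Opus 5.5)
The plan is a finite descent along legal predecessors, driven by the full-polynomial inequalities of Lemma~\ref{lem:finite-difference-monotonicity}. Start from the given collision-positive legal state \((r,s,\kappa)\). If it is already minimal collision-positive, take \((r_*,s_*,\kappa_*)=(r,s,\kappa)\). Otherwise, by Definition~\ref{def:minimal-collision-positive}, some legal predecessor is collision positive, and we move to it. Repeating this produces a sequence of legal, collision-positive states in which every step lowers \(r\) by one, lowers \(s\) by one, or raises \(\kappa\) by one. If the sequence stops, it stops at a minimal collision-positive state, and the coordinate inequalities \(r_*\le r\), \(s_*\le s\), \(\kappa_*\ge\kappa\) hold by construction.

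\emph{Termination.} I would use the potential \(\Phi=r+s-\kappa\), which drops by exactly one at each step. It suffices to show that \(\Phi\) is bounded below on legal states with \((p,q,\beta)\) fixed. From the legality inequalities of Definition~\ref{def:legal-state} one gets \(r+s\ge0\). From \(q+\beta-2\kappa\ge0\) one gets \(\kappa\le(q+\beta)/2\). Hence \(\Phi\ge-(q+\beta)/2\). The legal states also form a finite set, since \(0\le s\le\kappa\), \(-s\le r\le d\), and \(\kappa\) is bounded. Because \(\Phi\) strictly decreases, the descent therefore ends after finitely many steps.

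\emph{Comparison of polynomials.} At each step from a state \(X\) to a predecessor \(X'\), both states are legal. So Lemma~\ref{lem:finite-difference-monotonicity} applies to the pair, with the lower-indexed state as base point, and gives \(P_X-P_{X'}\ge0\) on the positive octant:
\begin{itemize}
\item if \(X'=(r-1,s,\kappa)\), then \(P_X-P_{X'}=\Delta_rP_{r-1,s,\kappa}=2(J_{\gamma_{r-1,s}}-J_{\gamma_{r,s}})\ge0\), by Muirhead;
\item if \(X'=(r,s-1,\kappa)\), the same argument works with \(\Delta_s\);
\item if \(X'=(r,s,\kappa+1)\), then \(P_X-P_{X'}=-\Delta_\kappa P_{r,s,\kappa}=J_{\mu_\kappa}-J_{\mu_{\kappa+1}}\ge0\).
\end{itemize}
Telescoping along the finite chain gives \(P_{r,s,\kappa}\ge P_{r_*,s_*,\kappa_*}\). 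The "consequently" clause follows at once, since the hypothesis gives \(P_{r_*,s_*,\kappa_*}\ge0\).

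The only delicate point is checking that the Muirhead steps are legitimate. For the inequalities \(J_{\gamma_{r-1,s}}\ge J_{\gamma_{r,s}}\) and \(J_{\mu_\kappa}\ge J_{\mu_{\kappa+1}}\), both triples must be genuine partitions related by a unit transfer from a larger coordinate to a smaller one. This holds because legality of both endpoints includes weak decrease of each triple. In the \(r\)-step, a unit moves from coordinate 1 to coordinate 2 with \(\gamma_1\ge\gamma_2\) on both sides, so the larger triple majorizes the smaller. The \(s\)- and \(\kappa\)-steps are handled the same way.

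Note that collision positivity is used only to pick which predecessor to step to. The global inequality itself never uses it.
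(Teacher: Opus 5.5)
Your proposal is correct and follows the paper's proof essentially step for step: you descend through collision-positive legal predecessors, get termination from the strictly decreasing integer potential \(r+s-\kappa\) on the finite legal lattice, and telescope the nonnegative full-polynomial differences of Lemma~\ref{lem:finite-difference-monotonicity}. Your explicit bound \(r+s-\kappa\ge-(q+\beta)/2\) and your check that legality turns each unit transfer into a genuine Muirhead comparison only spell out details that the paper asserts or delegates to that lemma.
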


\begin{proof}
Start from a collision-positive legal state \((r,s,\kappa)\).  If it is not minimal collision-positive, at least one of its legal predecessors is collision-positive.  Move to such a predecessor and repeat.

The legal lattice is finite: with the total degree and the upper endpoint fixed, only finitely many integer partitions can occur as the middle and lower exponent vectors.  In addition, every predecessor step strictly decreases the integer
\begin{equation*}
r+s-\kappa,
\end{equation*}
so no cycle is possible.  Hence the descent terminates at a minimal collision-positive legal state \((r_*,s_*,\kappa_*)\).

At every step Lemma~\ref{lem:finite-difference-monotonicity} gives a globally nonnegative difference.  For example,
\begin{align*}
P_{r,s,\kappa}-P_{r-1,s,\kappa}
&=
\Delta_rP_{r-1,s,\kappa}
\ge0,\\
P_{r,s,\kappa}-P_{r,s-1,\kappa}
&=
\Delta_sP_{r,s-1,\kappa}
\ge0,\\
P_{r,s,\kappa}-P_{r,s,\kappa+1}
&=
-\Delta_\kappa P_{r,s,\kappa}
\ge0.
\end{align*}
Telescoping along the finite predecessor chain therefore gives
\begin{equation*}
P_{r,s,\kappa}
\ge
P_{r_*,s_*,\kappa_*}
\end{equation*}
pointwise on \((0,\infty)^3\).  The final assertion follows immediately.
\end{proof}

The reduction theorem itself uses only the three nonnegative full-polynomial finite differences and finiteness of the legal lattice.  The discrete Monge identity is not needed for the finite descent; its role is to describe the structure of the set of minimal collision-positive states that remains after the reduction.

\subsection{Relative support for minimal states}
\label{sec:relative-support}

It remains to classify the minimal collision-positive states.  For a shifted geometric sum
\begin{equation*}
t^mA_n(t)=t^m+t^{m+1}+\cdots+t^{m+n-1},
\end{equation*}
we call the integer interval
\begin{equation*}
[m,m+n-1]
\end{equation*}
its exponent support.  The support polynomials appearing on the minimal-state set are of this form.  The parameter that governs their relative displacement is
\begin{equation*}
\chi
:=
q+r-\kappa
=
(\gamma_{r,s})_2-(\mu_\kappa)_3.
\end{equation*}
Thus \(\chi\) is the integer gap between the second exponent of the middle partition and the third exponent of the lower endpoint.  In the ratio of the two support polynomials used in the next section, this gap appears through the shift factor
\(t^{\chi-1}\); consequently the values \(\chi=1\) and \(\chi=0\) are the two boundary configurations between separated and overlapping supports.

We shall use the following terminology.  The four mutually exclusive cases
\begin{equation*}
\chi\ge2,
\qquad
\chi=1,
\qquad
\chi=0,
\qquad
\chi\le-1
\end{equation*}
are called, respectively, \emph{crossing support}, the \emph{seam}, \emph{contact}, and \emph{strict containment}.  These names refer only to the relative position of the collision supports.  They do not replace the phase decomposition of Section~\ref{sec:canonical-geometry}.  In particular, the double-exterior phase
\begin{equation*}
a>\alpha,
\qquad
c>\delta
\end{equation*}
is completed by the direct channel theorem for \(V\in\{0,1\}\) and the reciprocal maximal-depth argument below.

\section{Global Reduction and Sector Decomposition}
\label{sec:global-roadmap}
After the common translation of Lemma~\ref{lem:common-translation}, every admissible chain belongs to exactly one of
\begin{align*}
\text{(I)}\quad &a\le\alpha,\qquad c\le\delta,\\
\text{(II)}\quad &a>\alpha,\qquad c\le\delta,\\
\text{(III)}\quad &a\le\alpha,\qquad c>\delta,\\
\text{(IV)}\quad &a>\alpha,\qquad c>\delta.
\end{align*}
Part I is the midpoint region and is already proved.  Part III is the reciprocal image of Part II.  Thus only the preferred exterior orientation \(a>\alpha\) remains.

After Minimal-State Reduction, a minimal state is classified by \(\chi=q+r-\kappa\).  The remaining minimal states fall into three cases:
\begin{equation*}
\begin{array}{c|c}
\text{support geometry}&\text{proof mechanism}\\ \hline
\chi\ge2 & \text{ordered-minor crossing theorem}\\
\chi=1 & \text{ordered-ratio seam argument, or the direct channel lemma for \(V\in\{0,1\}\)}\\
\chi\le0 & \text{reciprocal maximal-depth completion.}
\end{array}
\end{equation*}
In double exterior, the only seam not covered by the ordered-minor theorem for \(V\ge2\) is \(V\in\{0,1\}\).  Its positive-\(r\) half is proved by the multiplicative-kernel argument of Section~\ref{sec:seam-v01}; its negative-\(r\) half is absorbed into the maximal-depth theorem of Section~\ref{sec:one-sided-completion}.  No separate contact, strict-containment, neutral-boundary, or legality-cone energy theorem is needed.

\section{Crossing Support and the Seam}
\label{sec:crossing-seam}

The next calculation is phase-independent at crossing support.  Once a minimal collision-positive state has \(\chi\ge2\), the argument depends only on the relative ordering of the two positive contribution supports and on the three spectator placements; it does not use the signs of \(a-\alpha\) or \(c-\delta\).  We therefore prove crossing support once and reuse it in every phase.  The seam \(\chi=1\) requires the additional boundary distinctions recorded below.

\label{sec:one-sided-crossing}

We first treat minimal collision-positive states for which the moving and terminal collision support polynomials overlap without containment. The argument has two parts: crossing support \(\chi\ge2\), and the limiting genuine seam \(\chi=1\) with \(V\ge2\).

\subsection{Crossing support: \texorpdfstring{\(\chi\ge2\)}{chi >= 2}}
\label{crossing-support-chige2}

At a minimal collision-positive state, the two collision finite differences that meet at the terminal edge are
\begin{equation*}
\mathcal A_r(t)
:=2t^{q+r-1}A_{U+1}(t),
\qquad
\mathcal A_\kappa(t)
:=t^\kappa A_{V-1}(t).
\end{equation*}
Their ratio is
\begin{equation*}
\rho_r(t)
:=\frac{\mathcal A_r(t)}{\mathcal A_\kappa(t)}
=2t^{\chi-1}\frac{A_{U+1}(t)}{A_{V-1}(t)}.
\end{equation*}
Thus the relative displacement of the two supports is exactly the exponent shift \(\chi-1\).  When \(V\ge2\), the displayed quotient is the convenient interior representation and the positive shift strictly orders the two nonzero support intervals.  When \(V=0\) or \(V=1\), that quotient is not used.  The corresponding boundary cases \(V\in\{0,1\}\) are handled by the boundary coefficient theorem in the companion technical supplement.

There are three possible choices of spectator variable, which we order as
\begin{equation*}
xy\mid z\;\succ\;xz\mid y\;\succ\;yz\mid x.
\end{equation*}
For an admissible moving index \(j\) and one of these three placements \(\sigma\), let \(\rho_j^\sigma\) denote the corresponding support ratio.  The adjacent moving-index comparison and the spectator-ratio comparison both have the same orientation.  Consequently, for every \(c_*>0\), the threshold set
\begin{equation*}
\mathcal F(c_*)
:=\{(j,\sigma):\rho_j^\sigma\ge c_*\}
\end{equation*}
is downward closed in the product of the moving-index order and the above spectator order.  We shall call such a set a \emph{downward-closed threshold set}.

The only operation needed to straighten such a threshold set is the following elementary uncrossing identity.  Let
\begin{equation*}
s_0,s_1,\ldots
\end{equation*}
be projectively ordered positive two-vectors and put
\begin{equation*}
K(i,j):=\det(s_i,s_j),
\qquad i<j.
\end{equation*}
Then \(K(i,j)\ge0\), and for
\begin{equation*}
a<c<b<t
\end{equation*}
the Pl\"ucker relation gives
\begin{equation*}
K(a,b)K(c,t)-K(a,t)K(c,b)
=K(a,c)K(b,t)\ge0.
\end{equation*}
Thus replacing one adjacent crossed pair by the nested pair changes the coefficient by a nonnegative quantity.  At a boundary where one of the ordered minors vanishes, the same identity holds with equality; hence no separate sign convention is required.

Starting from a downward-closed threshold set and repeatedly applying this adjacent uncrossing eventually produces a nested arrangement.  In that arrangement the coefficient expansion is triangular: every coefficient is a sum of products of binomial coefficients and ordered minors, and is therefore nonnegative.  Reversing the uncrossings adds only nonnegative corrections.

The coefficient representation, the three-family total-positivity theorem, the boundary calculation for \(V\in\{0,1\}\), and the terminal nested binomial expansion are given in full in the companion technical supplement.  In particular, the supplement supplies the bridge from the support-ratio threshold sets to the actual coefficients of the wall residual; the Pl\"ucker identity above is only the local uncrossing step.

\begin{theorem}[Crossing-Support Theorem]
\label{thm:crossing-support}
Let \(r,s,\kappa\) be a minimal collision-positive legal state. If
\begin{equation*}
\chi\ge2,
\end{equation*}
then
\begin{equation*}
P_{r,s,\kappa}(x,y,z)\ge0
\qquad(x,y,z>0).
\end{equation*}
\end{theorem}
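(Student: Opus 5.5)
Let \((r,s,\kappa)\) be a minimal collision-positive legal state with \(\chi\ge2\). The plan is to use minimality to get two explicit failing neighbors, then write \(P_{r,s,\kappa}\) as a nonnegative combination controlled by the ordered support ratios \(\rho_j^\sigma\). Minimality means that each legal predecessor \((r-1,s,\kappa)\), \((r,s-1,\kappa)\), \((r,s,\kappa+1)\) fails collision positivity. By Lemma~\ref{lem:finite-difference-monotonicity}, the collision quotients satisfy
\begin{equation*}
Q_{r,s,\kappa}=Q_{r-1,s,\kappa}+\mathcal A_r,
\qquad
Q_{r,s,\kappa}=Q_{r,s,\kappa+1}+\mathcal A_\kappa .
\end{equation*}
Hence at the terminal edge the one-variable function \(Q_{r,s,\kappa}\) is sandwiched: on some \(t\)-set it is smaller than \(\mathcal A_r(t)\), and on some set smaller than \(\mathcal A_\kappa(t)\).

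Since \(\chi\ge2\), the supports of \(\mathcal A_r\) and \(\mathcal A_\kappa\) are strictly ordered by the shift \(t^{\chi-1}\) (we are in the case \(V\ge2\); the cases \(V\le1\) are deferred as indicated). So \(\rho_r(t)=2t^{\chi-1}A_{U+1}/A_{V-1}\) is strictly increasing in \(t\). This monotonicity pins down where each failing predecessor can be negative: the \(r\)-predecessor fails only for \(t\) on one side of a threshold, and the \(\kappa\)-predecessor only on the other side.

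Next, evaluate \(P_{r,s,\kappa}\) at a general point \((x,y,z)\), normalized by homogeneity. Group the six labeled monomials of each orbit by the spectator placement \(xy\mid z\), \(xz\mid y\), \(yz\mid x\). In each placement, the unit transfer defining a predecessor contributes a two-variable piece of the form \((u-v)^2\cdot(\text{shifted geometric sum})\). This gives an analogue \(\rho_j^\sigma\) of the support ratio for every moving index \(j\) and placement \(\sigma\). The key claim is that adjacent moving indices and the spectator order both increase \(\rho\). Given this, the threshold sets \(\mathcal F(c_*)\) are downward closed, and \(P_{r,s,\kappa}\) can be written as a signed sum of products of \(2\times 2\) minors \(K(i,j)=\det(s_i,s_j)\) of projectively ordered positive vectors, weighted by binomial coefficients.

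The main step, and the one I expect to be hardest, is showing that this expansion has nonnegative coefficients. I would first apply adjacent Plücker uncrossings \(K(a,b)K(c,t)=K(a,t)K(c,b)+K(a,c)K(b,t)\) to move the arrangement to a nested one. In the nested arrangement the expansion is triangular, with each term a product of binomials and ordered minors, so positivity is immediate. Reversing the uncrossings only adds the nonnegative terms \(K(a,c)K(b,t)\). The delicate points are two: verifying that the coefficients of the actual wall residual really factor through these ordered minors, and handling the boundary cases where a minor vanishes. For the latter I would check that the Plücker identity degenerates to an equality there, so no separate sign convention is needed.
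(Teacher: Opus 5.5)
Your outline coincides with the paper's in-text sketch: strict monotonicity of the support ratio, downward-closed threshold sets \(\mathcal F(c_*)\), adjacent Pl\"ucker uncrossing to a nested configuration, and deferral of \(V\le1\). As a proof, however, it has a genuine gap exactly at what you call the ``main step''. You never construct the coefficient representation itself: which projectively ordered positive vectors \(s_i\) are meant, in which expansion of \(P_{r,s,\kappa}\) (or of the wall residual) the coefficients are taken, and which products \(K(i,j)K(k,l)\) occur with which weights. The paper does not supply this either. Its proof sends this bridge, the three-family total-positivity theorem, the terminal nested expansion and the \(V\in\{0,1\}\) boundary to a companion supplement, so following its outline does not close the gap.

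The missing step is not a formality, because the hypotheses have to enter it. Legality together with \(\chi\ge2\) and \(V\ge2\) does not give \(P\ge0\). For example, \(\lambda=(6,1,0)\), \(\gamma=(4,3,0)\), \(\mu=(3,3,1)\) is legal with \((r,s,\beta,\kappa)=(2,0,3,1)\), \(\chi=2\), \(V=2\), yet \(P(t,1,1)=2(t^6-2t^4+2t-1)\to-2\) as \(t\to0^+\). So no expansion can have nested coefficients that are nonnegative for purely geometric reasons. You must show how \(Q_{r,s,\kappa}\ge0\) and the failure of the legal predecessors force the weights to be nonnegative, and your sketch never does this. Calling the expansion a \emph{signed} sum also breaks the uncrossing step: \(K(a,b)K(c,t)=K(a,t)K(c,b)+K(a,c)K(b,t)\) propagates nonnegativity only through \emph{nonnegative} combinations of crossed products.

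Your use of minimality in the first paragraph is also not sound. Minimality constrains only legal predecessors, and the \(r\)-predecessor is illegal when \(r+s=0\), so the two failing neighbours need not both exist. Moreover, strict monotonicity of \(\rho_r=\mathcal A_r/\mathcal A_\kappa\) does not put the two failures on opposite sides of a threshold. We have \(Q_{r-1,s,\kappa}(t)<0\) if and only if \(Q_{r,s,\kappa}(t)<\mathcal A_r(t)\), and \(Q_{r,s,\kappa+1}(t)<0\) if and only if \(Q_{r,s,\kappa}(t)<\mathcal A_\kappa(t)\). So both fail at every \(t\) where \(Q_{r,s,\kappa}<\min(\mathcal A_r,\mathcal A_\kappa)\), and nothing you say excludes this. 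In any case this information is never fed into the expansion.

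The monotonicity of \(\rho_r\) itself is correct. The support \([\chi-1,\chi-1+U]\) of \(t^{\chi-1}A_{U+1}\) lies strictly to the right of \([0,V-2]\) at both ends, since \(\chi-1\ge1\) and \((\chi-1+U)-(V-2)=\nu+a+1\ge1\). But the ratios \(\rho_j^\sigma\) are never defined, the two orientation claims on which downward-closedness of \(\mathcal F(c_*)\) rests are only asserted, and \(V\le1\) is left open.

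To turn this into a proof you need two things. First, an explicit channel-by-channel identity expressing \(P_{r,s,\kappa}\) as a nonnegative combination of ordered-minor products, with the sign of each weight derived from collision positivity and minimality. Second, a separate argument for \(V\in\{0,1\}\).
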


\begin{proof}
For \(V\ge2\), the support ordering and successive uncrossing calculation above replace each crossed contribution by a more nested contribution plus a coefficientwise nonnegative correction and terminate in the positive nested coefficient configuration.  For \(V\le1\), the quotient representation is replaced by the boundary case \(V\in\{0,1\}\).  The full boundary calculation, including the boundary kernel and the terminal nested coefficient expansion, is given in the companion technical supplement.  These cases exhaust crossing support.
\end{proof}

\subsection{The seam: \texorpdfstring{\(\chi=1\)}{chi = 1}}
\label{one-sided-seam-chi1}

On the seam the shift factor disappears and
\begin{equation*}
\rho_r(t)
=2\frac{A_{U+1}(t)}{A_{V-1}(t)}.
\end{equation*}
The two support intervals are
\begin{equation*}
I_r=[0,U],
\qquad
I_\kappa=[0,V-2].
\end{equation*}
Using
\begin{equation*}
\nu=d-2\beta+\kappa,
\qquad
a=\beta-\alpha,
\qquad
\alpha=r+s,
\qquad
q+r=\kappa+1,
\end{equation*}
we obtain
\begin{equation*}
U-(V-2)=\nu+a+1>0.
\end{equation*}
Hence, when \(V\ge2\),
\begin{equation*}
[0,V-2]\subsetneq[0,U].
\end{equation*}
For a geometric sum,
\begin{equation*}
t\frac{d}{dt}\log A_m(t)
\end{equation*}
is the weighted mean of the exponents \(0,\ldots,m-1\).  Strict inclusion of the exponent intervals therefore gives
\begin{equation*}
t\frac{d}{dt}\log\rho_r(t)>0.
\end{equation*}
Thus \(\rho_r\) is strictly increasing.  The same adjacent moving-index and spectator-ratio comparisons used above remain valid, so the threshold sets are again downward-closed threshold sets.  Since the Pl\"ucker uncrossing identity is unchanged when the shift is zero, the crossing-support argument extends to the genuine seam.

If \(V\le1\), the terminal support polynomial has no nonzero geometric-sum factor and the quotient above is not used.  In the one-sided exterior region these are precisely the boundary cases \(V\in\{0,1\}\); the companion technical supplement proves their nonnegativity by the boundary coefficient theorem.  In the double-exterior region the low seam is a different geometric branch and is treated separately in Section~\ref{sec:seam-v01}.

\begin{theorem}[One-Sided Seam Theorem]
\label{thm:genuine-seam-one-sided}
Let \(r,s,\kappa\) be a minimal collision-positive legal state in the one-sided exterior region. If
\begin{equation*}
\chi=1,
\end{equation*}
then
\begin{equation*}
P_{r,s,\kappa}(x,y,z)\ge0
\qquad(x,y,z>0).
\end{equation*}
\end{theorem}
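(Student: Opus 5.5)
The plan is to split by the value of \(V=\mu_2-\mu_3\), following the preceding discussion. On the seam \(\chi=q+r-\kappa=1\), so the two terminal-edge collision differences \(\mathcal A_r(t)=2t^{\kappa}A_{U+1}(t)\) and \(\mathcal A_\kappa(t)=t^\kappa A_{V-1}(t)\) share the base point \(t^\kappa\). There are two regimes: the genuine seam \(V\ge2\), and the boundary slice \(V\in\{0,1\}\).

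\textbf{Genuine seam, \(V\ge2\).} I would first record the support comparison already computed above. Substituting \(\nu=d-2\beta+\kappa\), \(a=\beta-\alpha\), \(\alpha=r+s\) and \(q+r=\kappa+1\) into \(U=d-2r-s\) and \(V=q+\beta-2\kappa\) gives
\begin{equation*}
U-(V-2)=\nu+a+1.
\end{equation*}
In the one-sided exterior region \(a>\alpha\ge0\) and \(\nu\ge0\), so this quantity is at least \(2\). Hence \([0,V-2]\subsetneq[0,U]\).

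Next I would show that \(\rho_r=2A_{U+1}/A_{V-1}\) is strictly increasing on \((0,\infty)\). The reason is that \(t\,\tfrac{d}{dt}\log A_m\) is the mean of \(\{0,\dots,m-1\}\) under the weights \(t^k\). This mean is strictly increasing in \(m\) for each fixed \(t\), because appending the exponent \(m\) raises the mean. Therefore the logarithmic derivative of \(\rho_r\) is positive.

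With this monotonicity, the moving-index and spectator-placement comparisons keep the same orientation as in the crossing case. The threshold sets \(\mathcal F(c_*)\) are then downward closed, and I would rerun the argument of Theorem~\ref{thm:crossing-support} verbatim. That argument applies Plücker uncrossing \(K(a,b)K(c,t)-K(a,t)K(c,b)=K(a,c)K(b,t)\ge0\) until the arrangement is nested, and then reads off a nonnegative triangular coefficient expansion for \(P_{r,s,\kappa}\). The shift exponent \(\chi-1\) only entered through the ordering of the supports, and strict containment now supplies that ordering. No other input changes.

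\textbf{Boundary slice, \(V\in\{0,1\}\).} Here \(A_{V-1}\) vanishes or is undefined, so the quotient cannot be used. I would instead write \(\mu=(p-\beta,\kappa+V,\kappa)\) with \(V\le1\), so \(\mu\) is nearly balanced in its last two parts. I would then decompose \(P\) directly as the sum of the three nonnegative full-polynomial increments from Lemma~\ref{lem:finite-difference-monotonicity} and a residual. Minimality forces the predecessor \((r-1,s,\kappa)\) to fail collision positivity, which pins down the residual's collision sign. The residual should then be a short signed combination of monomials that can be checked coefficientwise. This is the boundary coefficient theorem the paper defers to the supplement, and I would invoke it as such.

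The main obstacle is making precise that the downward-closed threshold structure survives when \(\chi=1\). The adjacent spectator-ratio comparison must remain strict, or at least weak in the right direction, when the shift factor is \(1\) rather than a positive power of \(t\). I would first verify this on the two extreme placements, \(xy\mid z\) versus \(yz\mid x\), using the inequality \(U\ge V\) established above.
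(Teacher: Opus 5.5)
Your proposal follows the paper's proof essentially verbatim: for \(V\ge2\), the identity \(U-(V-2)=\nu+a+1>0\), strict monotonicity of \(\rho_r\) via the weighted-mean logarithmic derivative, and the unchanged Pl\"ucker uncrossing reduce the seam to the crossing-support argument, and for \(V\in\{0,1\}\) you defer to the supplement's boundary coefficient theorem, just as the paper does. One small sharpening: since \(a-c=-\chi+V\), the slice \(V=1\) has \(a=c\) and is closed directly by Lemma~\ref{lem:common-middle-lifting}, whereas for \(V=0\) the one-sided conditions force \(r\le-2\), so Lemma~\ref{lem:phase-independent-v0-seam} does not apply and the supplement is genuinely needed there.
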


\begin{proof}
For \(V\ge2\), strict support inclusion and support-ratio monotonicity give the same support ordering as in crossing support, and the order-zero uncrossing applies.  For \(V\le1\), use the boundary cases \(V\in\{0,1\}\) of the completed coefficient theorem in the companion technical supplement.
\end{proof}

\begin{lemma}[Common-middle collision lifting]
\label{lem:common-middle-lifting}
Assume that a legal canonical chain satisfies
\begin{equation*}
a=c.
\end{equation*}
If
\begin{equation*}
P(t,1,1)\ge0
\qquad(t>0),
\end{equation*}
then
\begin{equation*}
P(x,y,z)\ge0
\qquad(x,y,z>0).
\end{equation*}
\end{lemma}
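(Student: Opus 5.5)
Since \(a=c\), the canonical formulas give \(\mu_2-\gamma_2=a-c=0\). So \(\gamma\) and \(\mu\) share their middle exponent \(m:=\gamma_2=\mu_2\), and \(\mu\) is obtained from \(\gamma\) by moving \(a\) units from the first coordinate directly to the third. The plan is to exploit this common middle exponent with a mixing-variables (geometric-mean smoothing) argument, in the style of the proof of Theorem~\ref{thm:degree-nine}. The goal is to reduce the three-variable inequality to the collision restriction \(P(t,1,1)\ge0\), which is the hypothesis.

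\emph{Step 1: channel decomposition.} For each choice of spectator variable, say \(y\) carrying the middle exponent, the labeled orbit sum splits as
\begin{equation*}
J_{(u,m,w)}(x,y,z)=\sum_{\text{spectator}} y^{m}\bigl(x^{u}z^{w}+x^{w}z^{u}\bigr).
\end{equation*}
Because \(\gamma\) and \(\mu\) have the same middle exponent, in each channel
\begin{equation*}
J_\gamma-J_\mu=\sum_{\text{spectator }y}2y^{m}(xz)^{(\gamma_1+\gamma_3)/2}\Bigl(\cosh(H\tau)-\cosh\bigl((H-2a)\tau\bigr)\Bigr),
\qquad x/z=e^{2\tau}.
\end{equation*}
Here \(H=\gamma_1-\gamma_3\) and \(H-2a=\mu_1-\mu_3\ge0\), so \(J_\gamma-J_\mu\) is a sum of channelwise two-variable cosh-differences with a common prefactor. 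The term \(J_\lambda-J_\gamma\) I would expand in logarithmic coordinates. Write \(x=ge^{u_1},\,y=ge^{u_2},\,z=ge^{u_3}\) with \(\sum u_i=0\). Then every term is \(g^N\) times a sum of exponentials \(e^{\langle\eta_\sigma,u\rangle}\).

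\emph{Step 2: smoothing toward the collision locus.} Order the variables so that \(x\ge y\ge z\). Replace the two variables that are closest in logarithmic distance (say \(y,z\)) by their geometric mean, keeping \(xyz\) fixed. I then need to show that this replacement does not increase \(P\). The difference \(P(x,y,z)-P(x,\sqrt{yz},\sqrt{yz})\) is antisymmetric-squared in \(\log(y/z)\): it is an even function of \(\varepsilon=\tfrac12\log(y/z)\) vanishing at \(\varepsilon=0\). Using the power series of \(\cosh\) as in the proof of Theorem~\ref{thm:two-variable}, I would write it as \(\sum_k c_k\varepsilon^{2k}\) with coefficients that are quadratic-form combinations of exponent differences. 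The common middle exponent is used here: after grouping, the \(\gamma\)- and \(\mu\)-contributions pair off channel by channel, with the same spectator weight \(y^m\). The remaining comparison is then a Muirhead-type comparison of \(\lambda\) against the averaged pair. Through the convexity of \(z\mapsto\cosh(\tau\sqrt z)\), this reduces to the energy inequality \(\Energy(\lambda,\gamma,\mu)\ge0\), which holds by Lemma~\ref{lem:collision-energy}.

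\emph{Step 3: conclusion.} After smoothing, the point is \((x,w,w)\), and homogeneity gives \(P(x,w,w)=w^NP(x/w,1,1)\ge0\) by hypothesis. The reciprocal case, where the two closest variables are the larger ones, is handled identically, or by Lemma~\ref{lem:reciprocal}, which preserves \(a=c\) since it swaps \(a\leftrightarrow c\). If the chain already lies in the midpoint rectangle, Lemma~\ref{lem:phase-decomposition} finishes directly and Steps 1--2 are only needed in the exterior phases.

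The main obstacle is Step 2. The smoothing monotonicity need not hold coefficientwise in \(\varepsilon^{2k}\) for all \(k\) when \(\lambda_2\ne m\), because the \(\lambda\)-term does not share the channel structure. If the direct sign check fails, I would first try splitting
\begin{equation*}
P=\bigl(J_\lambda-J_{\lambda'}\bigr)+\bigl(J_{\lambda'}+J_\mu-2J_\gamma\bigr),
\end{equation*}
where \(\lambda'\) is the partition with middle part \(m\) that is nearest to \(\lambda\) in dominance with \(\lambda\succcurlyeq\lambda'\). The second summand then has all three exponents sharing a middle part and is handled exactly by the channelwise cosh argument. The first summand is nonnegative by Muirhead's inequality (Theorem~\ref{thm:muirhead}). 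The delicate point is to choose \(\lambda'\) so that collision positivity passes to the second summand. I would attempt this via the finite-difference formulas of Lemma~\ref{lem:finite-difference-monotonicity}.
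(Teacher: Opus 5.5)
Your proposal has a genuine gap, and the problem is not just an unverified step. The smoothing claim in Step~2 is false already for cubic Schur, which is a common-middle chain: $\lambda=(3,0,0)$, $\gamma=(2,1,0)$, $\mu=(1,1,1)$, with $a=c=1$.

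Put $L=\log x-\tfrac12\log(yz)$ and $\varepsilon=\tfrac12\log(y/z)$. The coefficients of your $\varepsilon$-expansion are not constants fixed by $\mathcal E$. They are exponential sums in $L$, and only at $L=0$ does the $\varepsilon^2$-coefficient reduce to a multiple of $\mathcal E$. For cubic Schur, the $\varepsilon^2$-coefficient of $P(x,y,z)-P(x,\sqrt{yz},\sqrt{yz})$ is a positive multiple of $8-4e^{L}-e^{2L}$. This is negative once $L>\log(2\sqrt3-2)\approx0.38$. Numerically, with $f=\sum x(x-y)(x-z)=P/2$, one gets $f(e,e^{0.1},e^{-0.1})\approx7.92$, while $f(e,1,1)=e(e-1)^2\approx8.03$. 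Here $\{y,z\}$ is indeed the closest pair. So replacing the closest pair by its geometric mean can increase $P$, and no sharper coefficient check will rescue Step~2.

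Your fallback split is the right decomposition. It is exactly the paper's choice: $\tau=(p-r,q+r,0)$ for $r\ge0$ and $\tau=(p,q+r,-r)$ for $r<0$. But the hope that collision positivity passes to $J_{\lambda'}+J_\mu-2J_\gamma$ cannot be realized in general. For cubic Schur the only partitions with middle part $1$ are $\gamma$ and $\mu$ themselves. So the second summand is $J_\mu-J_\gamma\le0$, with common-spectator pair profile $-(t-1)^2$. No channelwise argument can close the proof; positivity must be moved between channels.

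The missing idea is to apply the collision hypothesis for the full $P$ at both $(t,1,1)$ and $(t,t,1)$. Write a channel as $F(t,\rho)=e(t)+\rho^Bh(t)$, with $B=q+r$ and $e$ the profile of the Muirhead piece $J_\lambda-J_\tau$. The two collision points give $e+h\ge0$ and $e+t^Bh\ge0$, hence $F(t,\rho)\ge[1-(\rho/t)^B]e(t)$ for $t\ge1$. Now order $A\ge B_0\ge C$ and set $x=A/B_0$, $y=B_0/C$. Only the channel $(t,\rho)=(y,xy)$ has $\rho>t$. Its deficit, at most $(x^B-1)e(y)$, is paid by the middle channel $(xy,y)$, which contributes at least $(1-x^{-B})e(xy)$; this works because $e(xy)\ge x^Be(y)$.

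For $r<0$ the Muirhead piece carries spectator exponent $p$, and the bound becomes $F\ge\rho^B(\rho^{p-B}-1)e(t)$. The only losing channel is then $\rho=1/y$, and it is paid by the middle channel using $e(xy)\ge y^{-r}e(x)$. This cross-channel compensation is the same pairing that drives the classical proof of Schur, and it is what your plan lacks.
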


\begin{proof}
Put
\begin{equation*}
B:=q+r.
\end{equation*}
Since \(a=c\), the middle exponent is unchanged on the lower leg:
\begin{equation*}
\gamma=(p-r-s,B,s),
\qquad
\mu=(p-r-s-c,B,s+c).
\end{equation*}
For an exponent triple \((u,w,v)\), write
\begin{equation*}
\mathcal J_{u,w,v}^{XY\mid Z}
:=Z^w\bigl(X^uY^v+X^vY^u\bigr).
\end{equation*}
Summing over the three choices of spectator variable gives the labeled orbit sum, including the correct multiplicities when exponents coincide.

We split according to the sign of \(r\).

Suppose first that \(r\ge0\), and define
\begin{equation*}
\tau=(p-r,B,0).
\end{equation*}
Then
\begin{equation*}
P=(J_\lambda-J_\tau)+(J_\tau+J_\mu-2J_\gamma).
\end{equation*}
The second group has common spectator exponent \(B\).  For one channel put
\begin{equation*}
t=X/Y,
\qquad
\rho=Z/Y,
\end{equation*}
and divide by the common factor \(Y^N\), \(N=p+q\).  The channel has the form
\begin{equation}\label{eq:common-middle-positive-r-channel}
F(t,\rho)=e(t)+\rho^B h(t),
\end{equation}
where
\begin{equation}\label{eq:common-middle-positive-r-profile}
e(t)
=t^q(t^r-1)(t^{d-r}-1)\ge0
\qquad(t\ge1).
\end{equation}
The partition inequalities imply \(d-r\ge0\).

At the collision points \((t,1,1)\) and \((t,t,1)\), the channel with equal paired variables vanishes.  Hence collision positivity gives
\begin{equation*}
e(t)+h(t)\ge0,
\qquad
e(t)+t^Bh(t)\ge0.
\end{equation*}
For \(t\ge1\), the second inequality yields
\begin{equation*}
h(t)\ge-t^{-B}e(t),
\end{equation*}
so
\begin{equation}\label{eq:common-middle-positive-r-bound}
F(t,\rho)
\ge
\left[1-\left(\frac{\rho}{t}\right)^B\right]e(t).
\end{equation}
If \(B=0\), then \(q=r=0\), the first group vanishes, and collision positivity gives \(h(t)\ge0\) directly.  We may therefore assume \(B>0\).

Order the physical variables as
\begin{equation*}
A\ge B_0\ge C>0,
\qquad
x=A/B_0,
\qquad
y=B_0/C.
\end{equation*}
The three channels correspond to
\begin{equation*}
(t,\rho)
=(x,y^{-1}),
\qquad
(xy,y),
\qquad
(y,xy).
\end{equation*}
The first channel is nonnegative by \eqref{eq:common-middle-positive-r-bound}.  The middle and last channels have the same lower-variable factor \(C^N\), and
\begin{align*}
F(xy,y)&\ge(1-x^{-B})e(xy),\\
F(y,xy)&\ge-(x^B-1)e(y).
\end{align*}
Finally,
\begin{align*}
e(xy)
&=(xy)^q\bigl((xy)^r-1\bigr)
       \bigl((xy)^{d-r}-1\bigr)\\
&\ge x^{q+r}y^q(y^r-1)(y^{d-r}-1)\\
&=x^B e(y),
\end{align*}
so
\begin{equation*}
(1-x^{-B})e(xy)
\ge(x^B-1)e(y).
\end{equation*}
Thus the last possible deficit is paid by the middle channel.

Now suppose \(r<0\).  Write
\begin{equation*}
m:=-r>0,
\qquad
B=q-m.
\end{equation*}
Legality gives
\begin{equation*}
B=\gamma_2\ge\gamma_3=s\ge m,
\end{equation*}
and define
\begin{equation*}
\tau=(p,B,m).
\end{equation*}
Again
\begin{equation*}
P=(J_\lambda-J_\tau)+(J_\tau+J_\mu-2J_\gamma),
\end{equation*}
with the second group having common spectator exponent \(B\).  This time the first group fixes the exponent \(p\), and its pair profile is
\begin{equation}\label{eq:common-middle-negative-r-profile}
e(t)=(t^m-1)(t^B-1)\ge0
\qquad(t\ge1).
\end{equation}
Thus the normalized channel is
\begin{equation}\label{eq:common-middle-negative-r-channel}
F(t,\rho)=\rho^p e(t)+\rho^B h(t).
\end{equation}
The two collision endpoints give
\begin{equation*}
e(t)+h(t)\ge0,
\qquad
t^pe(t)+t^Bh(t)\ge0.
\end{equation*}
Since \(p>B\) and \(t\ge1\), the first bound is stronger, so
\begin{equation*}
h(t)\ge-e(t).
\end{equation*}
Consequently
\begin{equation}\label{eq:common-middle-negative-r-bound}
F(t,\rho)
\ge
\rho^B\bigl(\rho^{p-B}-1\bigr)e(t).
\end{equation}
Every channel with \(\rho\ge1\) is therefore nonnegative.  With the same ordered physical variables as above, only the first channel can lose.  Put
\begin{equation*}
a_0:=p-B>0.
\end{equation*}
After restoring the common factor \(C^N\), its deficit is at most
\begin{equation*}
C^N y^q(y^{a_0}-1)e(x),
\end{equation*}
whereas the middle channel contributes at least
\begin{equation*}
C^N y^B(y^{a_0}-1)e(xy).
\end{equation*}
Since \(q=B+m\), it remains only to note that
\begin{align*}
e(xy)
&=((xy)^m-1)((xy)^B-1)\\
&\ge y^m(x^m-1)(x^B-1)\\
&=y^m e(x).
\end{align*}
Hence the middle channel pays the entire first-channel deficit.  The third channel is nonnegative by \eqref{eq:common-middle-negative-r-bound}.  Therefore \(P\ge0\) in both signs of \(r\).
\end{proof}

\begin{lemma}[Seam lemma for \(V=0\)]
\label{lem:phase-independent-v0-seam}
Assume that a legal canonical state satisfies
\begin{equation*}
\chi=1,
\qquad
V=0,
\qquad
r\ge0.
\end{equation*}
If its collision restriction is nonnegative, then \(P\ge0\) on the positive orthant.
\end{lemma}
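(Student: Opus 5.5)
The plan is to reduce this seam to the mechanism of Lemma~\ref{lem:common-middle-lifting} (positive-$r$ branch), allowing for a one-unit defect in the middle exponent.

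\textbf{Step 1: identify the state.} With $\chi=1$ we have $\kappa=q+r-1$. With $V=0$ we have $q+\beta=2\kappa$, so $\beta=q+2r-2$. Hence
\begin{equation*}
\gamma=(p-r-s,\ \kappa+1,\ s),\qquad \mu=(p-\beta,\ \kappa,\ \kappa).
\end{equation*}
In transfer coordinates this gives $a-c=\beta-r-\kappa=-1$. So the state is a unit perturbation of the common-middle case $a=c$: the middle exponent drops from $B:=q+r=\kappa+1$ in $\gamma$ to $B-1$ in $\mu$.

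\textbf{Step 2: channel decomposition.} As in the positive-$r$ half of Lemma~\ref{lem:common-middle-lifting}, put $\tau=(p-r,B,0)$ and write
\begin{equation*}
P=(J_\lambda-J_\tau)+(J_\tau+J_\mu-2J_\gamma).
\end{equation*}
The first bracket is nonnegative by Muirhead's inequality, since $r\ge0$. Split the second bracket into the three spectator channels $XY\mid Z$ and normalize by $t=X/Y$, $\rho=Z/Y$. Because $\mu_2=\mu_3$, the $\mu$-contribution to each channel is a single positive term with spectator power $\rho^{B-1}$. Each channel therefore has the form
\begin{equation*}
F(t,\rho)=e(t)+\rho^{B}h(t)+\rho^{B-1}g(t),\qquad g>0,
\end{equation*}
with the same profile $e(t)=t^q(t^r-1)(t^{d-r}-1)\ge0$ for $t\ge1$ as in \eqref{eq:common-middle-positive-r-profile}. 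Evaluating at the two collision points $\rho=1$ and $\rho=t$ gives
\begin{equation*}
e+h+g\ge0,\qquad e+t^Bh+t^{B-1}g\ge0.
\end{equation*}

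\textbf{Step 3: the one-channel bound.} If $h(t)\ge0$, then $F\ge0$ trivially. If $h(t)<0$, then for $\rho\le t$ we have $\rho h+g\ge th+g$. There are two subcases.
\begin{itemize}
\item If $th+g\ge0$, then $F\ge0$ directly.
\item Otherwise, the second collision inequality gives $th+g\ge -t^{1-B}e$, and hence
\begin{equation*}
F(t,\rho)\ge\bigl[1-(\rho/t)^{B-1}\bigr]e(t).
\end{equation*}
\end{itemize}
This is the analogue of \eqref{eq:common-middle-positive-r-bound} with the exponent $B$ replaced by $B-1$. If $B=1$, the bound collapses to $F\ge0$ for $\rho\le t$. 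Otherwise, the only channel that can lose is the one with $\rho>t$.

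\textbf{Step 4: the three-channel transfer, and the expected obstacle.} Order the variables $A\ge B_0\ge C$ with $x=A/B_0$ and $y=B_0/C$. The channels are $(x,y^{-1})$, $(xy,y)$ and $(y,xy)$. The first two satisfy $\rho\le t$. The third can lose at most $(x^{B-1}-1)e(y)$, since for $\rho>t$ the bound $\ge-(\rho/t)^{B-1}e$ persists when $h<0$. The middle channel supplies at least $(1-x^{-(B-1)})e(xy)$. The estimate $e(xy)\ge x^{q+r}e(y)\ge x^{B-1}e(y)$ then shows that the middle channel pays the deficit.

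I expect the main obstacle to be Step 3 in the range $\rho>t$. There the lower bound must hold uniformly in $\rho$, and the extra positive $g$-term has the wrong power of $\rho$ relative to $h$. I would first try to keep the factor $\rho^{B-1}(\rho h+g)$ intact and compare it to its value at $\rho=t$. If that fails, I would rebalance by moving part of $g$ into the $\tau$ bracket, replacing $\tau$ by $(p-r,B-1,1)$.
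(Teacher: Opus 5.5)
\textbf{There is a genuine gap, and it sits in Step 2, earlier than the obstacle you anticipated.}

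Since $\mu_2=\mu_3=B-1$ while $\tau$ and $\gamma$ carry spectator exponent $B=q+r$, the group $J_\tau+J_\mu-2J_\gamma$ has no common spectator exponent. So its channel does not vanish when the paired variables coincide: $F(1,\rho)=\rho^Bh(1)+\rho^{B-1}g(1)=2\rho^{B-1}(1-\rho)\neq0$. The collision restriction is therefore $P(t,1,1)=2F(t,1)+F(1,t)$, which couples two channels. It does not give your second inequality $F(t,t)\ge0$. Using $F(t,\rho)=t^NF(1/t,\rho/t)$, it only gives $F(t,t)\ge -t^{N-B}(t-1)$ for $t>1$.

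This is not a technicality. Take $\lambda=(4,3,0)$, $\gamma=(3,3,1)$, $\mu=(3,2,2)$. This state is legal with $\chi=1$, $V=0$, $r=0$, and $P(t,1,1)=2(t-1)^2(t^2+1)\ge0$. Here $B=3$, $\tau=\lambda$, $e\equiv0$, $h=t^4-2t^3-2t+1$, $g=t^3+t^2$. Then $F(t,t)=t^3(t^4-2t^3+t^2-t+1)$, and the quartic factor equals $-89/625$ at $t=6/5$.

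So your Step 3 bound, which here would assert $F\ge0$ whenever $\rho\le t$, is false. At the point $(6/5,6/5,1)$ both the middle channel and the third channel are negative. What pays for them is the first channel, $F(1,5/6)>0$, and that is exactly the contribution your bookkeeping throws away.

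\textbf{The missing idea is to push the unit defect onto the middle partition instead of leaving it in $\mu$.} Then every group has a common spectator exponent and a profile vanishing at $t=1$. The paper's proof sets $\kappa=q+r-1$, $\tau=(p-r+1,\kappa,0)$, $\gamma^+=(\gamma_1+1,\kappa,s)$, and writes $P=(J_\lambda-J_\tau)+(J_\tau+J_\mu-2J_{\gamma^+})+2(J_{\gamma^+}-J_\gamma)$.

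The last group is Muirhead-nonnegative, with spectator exponent $s$ and profile $d_+(t)=t^{\kappa}(t-1)(t^{M}-1)$, where $M=\gamma_1-\kappa\ge1$. Because $\kappa-s=c>0$, it enters the one-channel bound just like $e$. The estimate $d_+(xy)\ge x^{\kappa}d_+(y)$ pays its share of the overshoot.

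With this choice the first profile becomes $t^q(t^{r-1}-1)(t^{d-r+1}-1)$, so the argument needs $r\ge1$. The corner $r=0$ is handled separately: $P=(J_\lambda+J_\theta-2J_\gamma)-(J_\theta-J_\mu)$ with $\theta=(q+\nu-1,q,q-2)$, split into the cases $\nu=0$, $\nu=1$, $\nu\ge2$.

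Your fallback $\tau=(p-r,B-1,1)$ does not repair the problem. It leaves $\gamma$ at spectator exponent $B$ while $\tau$ and $\mu$ sit at $B-1$, so the channel at $t=1$ again fails to vanish.
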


\begin{proof}
The identity
\begin{equation*}
a-c=-\chi+V
\end{equation*}
gives \(c=a+1\).  Put
\begin{equation*}
B:=q+r-1=\kappa.
\end{equation*}
Then
\begin{equation*}
\mu=(B+\nu,B,B),
\qquad
\gamma=(\gamma_1,B+1,s),
\qquad
\gamma_1=p-r-s.
\end{equation*}

First suppose \(r\ge1\).  Define
\begin{equation*}
\tau=(p-r+1,B,0),
\qquad
\gamma^+=(\gamma_1+1,B,s).
\end{equation*}
Then
\begin{equation}\label{eq:phase-independent-v0-decomposition}
P
=(J_\lambda-J_\tau)
 +(J_\tau+J_\mu-2J_{\gamma^+})
 +2(J_{\gamma^+}-J_\gamma).
\end{equation}
For one pair channel, with \(t=X/Y\) and \(\rho=Z/Y\), division by \(Y^N\) gives
\begin{equation}\label{eq:phase-independent-v0-channel}
F(t,\rho)
=e(t)+\rho^B h(t)+2\rho^s d_+(t),
\end{equation}
where
\begin{equation*}
e(t)=t^q(t^{r-1}-1)(t^{d-r+1}-1),
\end{equation*}
and
\begin{equation*}
d_+(t)=t^B(t-1)(t^M-1),
\qquad
M:=\gamma_1-B\ge1.
\end{equation*}
Both profiles are nonnegative for \(t\ge1\).  The collision endpoint \(\rho=t\) yields
\begin{equation*}
h(t)
\ge
-t^{-B}e(t)-2t^{s-B}d_+(t),
\end{equation*}
so
\begin{align}\label{eq:phase-independent-v0-bound}
F(t,\rho)\ge{}&
\left[1-\left(\frac{\rho}{t}\right)^B\right]e(t)\notag\\
&+2\rho^s
\left[1-\left(\frac{\rho}{t}\right)^{B-s}\right]d_+(t).
\end{align}
Here \(B-s=c>0\).  Thus the two channels with \(\rho\le t\) are nonnegative except that we retain the middle channel as the positive contribution that controls the unique overshooting channel.

Order \(A\ge B_0\ge C>0\) and put \(x=A/B_0\), \(y=B_0/C\).  From \eqref{eq:phase-independent-v0-bound},
\begin{align*}
F(xy,y)\ge{}&
(1-x^{-B})e(xy)
+2y^s(1-x^{s-B})d_+(xy),\\
F(y,xy)\ge{}&
-(x^B-1)e(y)
-2y^s(x^B-x^s)d_+(y).
\end{align*}
The two required multiplicative estimates are elementary:
\begin{align*}
e(xy)
&\ge x^{q+r-1}e(y)=x^B e(y),\\
d_+(xy)
&\ge x^{B+1}d_+(y)
\ge x^B d_+(y).
\end{align*}
They pay the two deficits term by term.  The remaining channel is nonnegative by \eqref{eq:phase-independent-v0-bound}.

It remains to treat \(r=0\).  Then \(B=q-1\) and
\begin{equation*}
\mu=(q+\nu-1,q-1,q-1).
\end{equation*}
Define
\begin{equation*}
\theta=(q+\nu-1,q,q-2).
\end{equation*}
Legality gives \(q\ge2\).  We have
\begin{equation*}
P=(J_\lambda+J_\theta-2J_\gamma)-(J_\theta-J_\mu).
\end{equation*}
The first group has common spectator exponent \(q\), while the second has spectator exponent
\begin{equation*}
L=q+\nu-1
\end{equation*}
and pair profile
\begin{equation*}
f(t)=t^{q-2}(t-1)^2.
\end{equation*}
Thus
\begin{equation*}
F(t,\rho)=\rho^q\bigl[c_0(t)-\rho^h f(t)\bigr],
\qquad
h:=\nu-1.
\end{equation*}
The two collision endpoints imply
\begin{equation*}
c_0(t)\ge f(t),
\qquad
c_0(t)\ge t^h f(t).
\end{equation*}
If \(\nu=1\), every channel is nonnegative.  If \(\nu\ge2\), then
\begin{equation*}
F(t,\rho)\ge\rho^q(t^h-\rho^h)f(t),
\end{equation*}
and the only overshooting loss is paid by
\begin{equation*}
f(xy)\ge x^q f(y),
\end{equation*}
which follows from
\begin{equation*}
\frac{f(xy)}{f(y)}
=x^{q-2}\left(\frac{xy-1}{y-1}\right)^2
\ge x^q.
\end{equation*}
Finally, if \(\nu=0\), the bound \(c_0\ge f\) is stronger for \(t\ge1\).  The only possible loss is in the channel with \(\rho<1\), and the adjacent channel pays it because
\begin{equation*}
\frac{f(xy)}{f(x)}
=y^{q-2}\left(\frac{xy-1}{x-1}\right)^2
\ge y^{q-2}.
\end{equation*}
Thus \(P\ge0\) also at \(r=0\).
\end{proof}

\begin{lemma}[Reciprocal one-sided seam]
\label{lem:reciprocal-one-sided-seam}
Let \((r,s,\kappa)\) be a minimal collision-positive legal state in the reciprocal one-sided exterior region
\begin{equation*}
a\le\alpha,
\qquad
c>\delta.
\end{equation*}
If
\begin{equation*}
\chi=1,
\end{equation*}
then
\begin{equation*}
P_{r,s,\kappa}(x,y,z)\ge0
\qquad(x,y,z>0).
\end{equation*}
\end{lemma}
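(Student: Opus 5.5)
Our plan is to avoid a new coefficient analysis in the reciprocal region and to use the relation between \(\chi\), \(V\) and the transfer parameters together with Lemmas~\ref{lem:common-middle-lifting} and~\ref{lem:phase-independent-v0-seam}.

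The first step is to combine \(\chi=1\) with the identity \(a-c=-\chi+V\) used in the proof of Lemma~\ref{lem:phase-independent-v0-seam}. This gives
\begin{equation*}
c=a+1-V.
\end{equation*}
The region assumption \(c>\delta\ge0\) then gives \(a+1-V\ge1\), hence \(V\le a\). We split into three cases, \(V=0\), \(V=1\), and \(V\ge2\).

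\textbf{Case \(V=1\).} Here \(a=c\), so Lemma~\ref{lem:common-middle-lifting} applies directly. It needs only collision positivity, which holds because the state is collision-positive, and it gives \(P\ge0\).

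\textbf{Case \(V=0\).} Here \(c=a+1\). If \(r\ge0\), Lemma~\ref{lem:phase-independent-v0-seam} applies verbatim, since it is phase-independent. If \(r<0\), we intend to use minimality. The predecessor \((r,s,\kappa+1)\) is illegal, because \(q+\beta-2(\kappa+1)=V-2<0\). The predecessor \((r-1,s,\kappa)\) is legal precisely when \(r+s\ge1\) and \(q+r-1\ge s\). When it is legal, it fails collision positivity, and we want to exploit the identity \(\Delta_rQ=2t^{q+r-1}A_{U+1}\) at the seam. If the legality conditions instead pin down the state, we expect it to reduce to \(s=0\) or \(\gamma_2=\gamma_3\). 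In that situation the chain has a repeated exponent, and the channel decomposition with \(\theta=(\mu_1,\mu_2+1,\mu_3-1)\) from the \(r=0\) part of Lemma~\ref{lem:phase-independent-v0-seam} should carry over unchanged, with the pay-off inequality \(f(xy)\ge x^{q}f(y)\) read with shifted exponents.

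\textbf{Case \(V\ge2\).} We follow the proof of Theorem~\ref{thm:genuine-seam-one-sided}. The seam identity \(U-(V-2)=\nu+a+1>0\) uses only \(\nu\ge0\) and \(a\ge0\), not the sign of \(a-\alpha\). So the strict inclusion \([0,V-2]\subsetneq[0,U]\) still holds, \(\rho_r\) is strictly increasing, the threshold sets are downward closed, and the Plücker uncrossing argument applies exactly as for crossing support. An alternative is to apply Lemma~\ref{lem:reciprocal}, which maps the state to the one-sided region \(a>\alpha\), \(c\le\delta\). This route does not obviously preserve minimality or the value \(\chi=1\), so we prefer the direct seam argument and use reciprocity only for orientation.

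We expect the main obstacle to be \(V=0\) with \(r<0\). There neither Lemma~\ref{lem:phase-independent-v0-seam} nor the common-middle lemma applies, and the argument must exploit minimality or a three-channel pay-off estimate modelled on the \(r<0\) half of Lemma~\ref{lem:common-middle-lifting}. In that half one takes \(\tau=(p,B,m)\), and the middle channel pays the first-channel deficit through \(e(xy)\ge y^me(x)\). Our first attempt will be to adapt this estimate to profiles that include an extra \(d_+\) term.
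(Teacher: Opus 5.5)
Your treatment of \(V=1\) and of \(V\ge2\) agrees with the paper. For \(V=1\) you use common-middle lifting, since \(a=c\). For \(V\ge2\) you note that the seam identity \(U-(V-2)=\nu+a+1>0\) uses only \(a,\nu\ge0\), so the support-ratio monotonicity and the uncrossing step carry over.

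The gap is the subcase \(V=0\), \(r<0\). You single it out as the main obstacle but do not settle it. You offer three plans: descending through the \(r\)-predecessor, expecting legality to force \(s=0\) or \(\gamma_2=\gamma_3\), and adapting the \(\tau=(p,B,m)\) pay-off with an extra \(d_+\) term. None of them is carried out or checked, so as written the proposal does not prove the lemma.

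The missing observation, which is exactly the paper's argument, is that this subcase cannot occur in the reciprocal one-sided region. You used \(c>\delta\) only through \(c\ge1\), which gives \(V\le a\) and says nothing about \(r\). Use integrality in the sharp form \(c\ge\delta+1\) instead:
\begin{itemize}
\item When \(V=0\), the identity \(a-c=-\chi+V\) gives \(c=a+1\), hence \(a\ge\delta\).
\item Combined with the region hypothesis \(a\le\alpha\), this yields \(\delta\le\alpha\), that is, \(r=\alpha-\delta\ge0\).
\end{itemize}
So every \(V=0\) state on this seam is covered by Lemma~\ref{lem:phase-independent-v0-seam}, with no appeal to minimality and no new channel estimate. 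Negative \(r\) with \(V=0\) is a real possibility in the double-exterior region, where \(a>\alpha\) replaces \(a\le\alpha\). That is why the paper needs Lemma~\ref{lem:negative-terminal-seam} there, but the case is vacuous under \(a\le\alpha\), \(c>\delta\).
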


\begin{proof}
For \(V\ge2\), the two nonzero positive contribution supports are
\begin{equation*}
I_r=[q+r-1,\ q+r-1+U],
\qquad
I_\kappa=[\kappa,\ \kappa+V-2].
\end{equation*}
On \(\chi=1\) they have a common left endpoint, while
\begin{equation*}
\max I_r-\max I_\kappa
=\nu+a+1>0.
\end{equation*}
Thus the same support-ratio monotonicity and downward-closed uncrossing as in the genuine seam apply.  Reciprocal complement reverses both the moving-index and spectator orders and rescales the two index directions by positive factors, so every \(3\times3\) minor keeps its sign.  Hence the seam with \(V\ge2\) is nonnegative.

It remains to treat \(V\le1\).  The identity
\begin{equation*}
a-c=-\chi+V
\end{equation*}
shows that \(V=1\) gives \(a=c\), so Lemma~\ref{lem:common-middle-lifting} applies.

If \(V=0\), then \(c=a+1\).  Since the present state is reciprocal one-sided,
\begin{equation*}
a\le\alpha,
\qquad
c>\delta.
\end{equation*}
The parameters are integral, hence \(c\ge\delta+1\), so
\begin{equation*}
a=c-1\ge\delta.
\end{equation*}
Therefore
\begin{equation*}
r=\alpha-\delta\ge0.
\end{equation*}
Lemma~\ref{lem:phase-independent-v0-seam} applies.  This proves the reciprocal one-sided seam without extending the scope of the coefficient theorem.
\end{proof}

\begin{corollary}[Exterior Crossing-and-Seam Corollary]
\label{thm:double-cross-seam}
Let \((r,s,\kappa)\) be a minimal collision-positive legal state in either the one-sided exterior or the double-exterior region.  If
\begin{equation*}
\chi\ge2,
\end{equation*}
or if
\begin{equation*}
\chi=1,\qquad V\ge2,
\end{equation*}
then
\begin{equation*}
P_{r,s,\kappa}(x,y,z)\ge0
\qquad(x,y,z>0).
\end{equation*}
\end{corollary}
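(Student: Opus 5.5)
This corollary is essentially an assembly of results already proved, so the plan is to split by $\chi$ and by region and cite the appropriate theorem in each case. Crossing support first.

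\textbf{Case $\chi\ge2$.} Here I would invoke Theorem~\ref{thm:crossing-support} directly. That theorem is stated for an arbitrary minimal collision-positive legal state. The opening of Section~\ref{sec:crossing-seam} stresses that the crossing argument is phase-independent: it uses only the ordering of the two contribution supports and the three spectator placements, never the signs of $a-\alpha$ or $c-\delta$. So it applies verbatim in the one-sided exterior region and in the double-exterior region.

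\textbf{Case $\chi=1$, $V\ge2$.} Theorem~\ref{thm:genuine-seam-one-sided} covers the one-sided exterior region. For the double-exterior region I would rerun the argument of that proof rather than cite the theorem as stated, and the key step is to check that every ingredient is phase-free:
\begin{itemize}
\item The two supports are $I_r=[0,U]$ and $I_\kappa=[0,V-2]$ after removing the common shift. This needs only $q+r=\kappa+1$.
\item The gap identity $U-(V-2)=\nu+a+1$ uses only $\nu=d-2\beta+\kappa$, $a=\beta-\alpha$ and $\alpha=r+s$, all of which are definitions valid in every region.
\item Positivity of the gap needs $a\ge0$ and $\nu\ge0$. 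Legality gives $\nu\ge0$, and $a=\gamma_1-\mu_1\ge0$ holds by dominance, so in fact $\nu+a+1\ge1$ everywhere. In the double-exterior region $a>\alpha\ge0$ makes this even more evident.
\end{itemize}
Hence $[0,V-2]\subsetneq[0,U]$. The weighted-mean argument for $t\frac{d}{dt}\log A_m$ then gives strict monotonicity of $\rho_r$. The downward-closed threshold sets and the Plücker uncrossing step go through exactly as in the genuine seam, since nothing in them refers to $c$ versus $\delta$.

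\textbf{Main obstacle.} The step I expect to need the most care is confirming that the companion-supplement coefficient theorem invoked for $V\ge2$ is itself phase-independent, i.e.\ that it really uses only support ordering. My approach is to point to the statement at the start of Section~\ref{sec:crossing-seam} that crossing is proved "once and reused in every phase." The seam with $V\ge2$ differs from crossing only in that the shift $\chi-1$ is zero, and the Plücker identity is insensitive to that shift.

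Finally, I would note that the excluded boundary $V\in\{0,1\}$ on the double-exterior seam is deliberately not claimed here; it is deferred to Section~\ref{sec:seam-v01} and the maximal-depth argument.
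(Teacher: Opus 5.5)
Your proposal is correct and follows essentially the same route as the paper: cite Theorem~\ref{thm:crossing-support} for $\chi\ge2$, and observe that the ordered-minor seam argument of Theorem~\ref{thm:genuine-seam-one-sided} never uses the sign of $c-\delta$, so it carries over to the double-exterior seam with $V\ge2$. Your explicit check that the gap $U-(V-2)=\nu+a+1$ is positive, using only $a=\gamma_1-\mu_1\ge0$ and $\nu\ge0$, is a useful detail that the paper leaves implicit.
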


\begin{proof}
Apply Theorem~\ref{thm:crossing-support} in crossing support.  On the seam with \(V\ge2\), the same ordered-minor argument used in Theorem~\ref{thm:genuine-seam-one-sided} is independent of the sign of \(c-\delta\), so it applies in either exterior region.
\end{proof}
\section{The Seam Cases \(V\in\{0,1\}\)}
\label{sec:seam-v01}

We now isolate the only seam with \(V\in\{0,1\}\) in the double-exterior region that is not covered by the ordered-minor theorem for \(V\ge2\).  Thus assume
\begin{equation*}
a>\alpha,\qquad c>\delta.
\end{equation*}
The crossing and genuine-seam cases were settled in Section~\ref{sec:crossing-seam}.  It remains, for nonnegative middle offset, to prove
\begin{equation*}
\chi=1,\qquad V\in\{0,1\},\qquad r\ge0.
\end{equation*}
When \(V=0\) or \(V=1\), the terminal geometric-sum factor disappears, so these two boundary cases require a separate direct argument.

Put
\begin{equation*}
k:=a-\alpha>0,
\qquad
u:=c-\delta=c-s>0,
\qquad
\nu:=d-2\beta+\kappa=\mu_1-\mu_2\ge0.
\end{equation*}
Since \(\chi=1\),
\begin{equation*}
q=2s+u-r+1,
\end{equation*}
while
\begin{equation*}
V=r+k-u+1.
\end{equation*}
Hence
\begin{equation*}
u=r+k+1-V,
\end{equation*}
so that
\begin{equation*}
V=1\iff c=a,
\qquad
V=0\iff c=a+1.
\end{equation*}
Substituting these relations gives
\begin{equation*}
q=2s+k+2-V,
\end{equation*}
\begin{equation*}
p=3r+4s+2k+\nu+1,
\end{equation*}
and the three exponent vectors become
\begin{equation*}
\lambda=
(3r+4s+2k+\nu+1,\ 2s+k+2-V,\ 0),
\end{equation*}
\begin{equation*}
\gamma=
(2r+3s+2k+\nu+1,\ r+2s+k+2-V,\ s),
\end{equation*}
\begin{equation*}
\mu=
(r+2s+k+\nu+1,\ r+2s+k+1,\ r+2s+k+1-V).
\end{equation*}
Thus only the two terminal faces \(V=1\) and \(V=0\) have to be considered.

\subsection{Pair channels and the multiplicative kernel}

For a labeled exponent triple \((\xi,\eta,\zeta)\), define the channel in which the exponent \(\eta\) is carried by the spectator variable \(Z\) by
\begin{equation*}
\mathcal J_{\xi,\eta,\zeta}^{XY\mid Z}
=
Z^\eta
\left(
X^\xi Y^\zeta+X^\zeta Y^\xi
\right).
\end{equation*}
Then, with the labeled orbit-sum convention,
\begin{equation*}
J_{(\xi,\eta,\zeta)}
=
\mathcal J_{\xi,\eta,\zeta}^{xy\mid z}
+
\mathcal J_{\xi,\eta,\zeta}^{xz\mid y}
+
\mathcal J_{\xi,\eta,\zeta}^{yz\mid x}.
\end{equation*}
This identity remains exact when two exponents coincide, because the six labeled permutations retain their multiplicities.

For a fixed channel put
\begin{equation*}
t=\frac XY,
\qquad
\rho=\frac ZY.
\end{equation*}
If every term in a group has the same spectator exponent \(B\), its channel has the form
\begin{equation*}
Y^N\rho^B h(t),
\end{equation*}
where \(N\) is the common total degree.

We shall repeatedly use the following elementary inequality.

\begin{lemma}[Multiplicative two-factor kernel]
\label{lem:multiplicative-kernel}
Let
\begin{equation*}
E(t)=t^b(t^\ell-1)(t^m-1),
\qquad
b\ge0,\qquad m\ge\ell\ge0.
\end{equation*}
Then, for \(x,y\ge1\),
\begin{equation*}
E(xy)\ge x^{b+\ell}E(y).
\end{equation*}
\end{lemma}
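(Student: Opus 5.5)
The inequality splits into three factors, and I will bound each one separately and multiply. Write
\begin{equation*}
\frac{E(xy)}{E(y)}
=x^b\cdot\frac{(xy)^\ell-1}{y^\ell-1}\cdot\frac{(xy)^m-1}{y^m-1},
\end{equation*}
valid when \(y>1\) and \(\ell,m\ge1\). The degenerate cases are dispatched first. If \(\ell=0\), then \(E\equiv0\) and both sides vanish. If \(y=1\), then \(E(y)=0\) while \(E(xy)\ge0\) for \(xy\ge1\), so the inequality is trivial. Hence I may assume \(y>1\) and \(m\ge\ell\ge1\).

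The factor \(t^b\) gives exactly \((xy)^b\ge x^b y^b\), since \(x,y\ge1\). Only the power \(x^b\) is needed against \(y^b\); the remaining \(y^b\) is absorbed into \(E(y)\).

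For the factor \(t^\ell-1\), I will use the elementary inequality
\begin{equation*}
(xy)^\ell-1\ge x^\ell(y^\ell-1).
\end{equation*}
This is equivalent to \(x^\ell-1\ge0\), since \((xy)^\ell-1-x^\ell y^\ell+x^\ell=x^\ell-1\). This supplies the factor \(x^\ell\).

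For the factor \(t^m-1\), no power of \(x\) is required, only \((xy)^m-1\ge y^m-1\). This holds because \(t\mapsto t^m\) is increasing and \(xy\ge y\).

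Multiplying the three nonnegative lower bounds gives
\begin{equation*}
E(xy)\ge x^b y^b\cdot x^\ell(y^\ell-1)\cdot(y^m-1)=x^{b+\ell}E(y).
\end{equation*}

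The only care needed is to keep every factor nonnegative, which holds since \(x,y\ge1\), and to notice that the hypothesis \(m\ge\ell\) is not actually used. It merely labels the smaller exponent as the one extracting the power of \(x\), which is the stronger of the two available choices. I expect no real obstacle here. The main point is to assign the gain \(x^\ell\) to the factor with the smaller exponent and to use plain monotonicity for the other factor.
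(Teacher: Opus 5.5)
Your proof is correct and is essentially the paper's argument: peel off $x^\ell$ via $(xy)^\ell-1=x^\ell(y^\ell-1)+(x^\ell-1)$, bound $(xy)^m-1\ge y^m-1$ by monotonicity, and multiply the nonnegative factors. One correction to your closing remark, which does not affect the proof: extracting the power from the larger exponent is the stronger choice, and in fact the same identity applied to both factors gives $E(xy)\ge x^{b+\ell+m}E(y)$.
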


\begin{proof}
For \(\ell=0\) the statement is trivial.  Assume \(\ell>0\).  Since
\begin{equation*}
(xy)^\ell-1
=
x^\ell(y^\ell-1)+(x^\ell-1)
\ge
x^\ell(y^\ell-1),
\end{equation*}
and
\begin{equation*}
(xy)^m-1\ge y^m-1,
\end{equation*}
multiplication gives
\begin{equation*}
E(xy)
\ge
x^{b+\ell}y^b
(y^\ell-1)(y^m-1)
=
x^{b+\ell}E(y).
\end{equation*}
\end{proof}

We order the physical variables as
\begin{equation*}
A\ge B\ge C>0
\end{equation*}
and write
\begin{equation*}
x=\frac AB\ge1,
\qquad
y=\frac BC\ge1.
\end{equation*}
The three channels then have
\begin{equation*}
(A,B)\mid C:
\qquad
(t,\rho)=\left(x,\frac1y\right),
\end{equation*}
\begin{equation*}
(A,C)\mid B:
\qquad
(t,\rho)=(xy,y),
\end{equation*}
and
\begin{equation*}
(B,C)\mid A:
\qquad
(t,\rho)=(y,xy).
\end{equation*}
Only the last channel can satisfy \(\rho>t\).  The proof below therefore consists of using the positive contribution of the middle channel and using Lemma~\ref{lem:multiplicative-kernel} to pay the unique overshoot.

\subsection{The face \texorpdfstring{\(V=1\)}{V = 1}}

Assume first
\begin{equation*}
V=1.
\end{equation*}
Then
\begin{equation*}
q=2s+k+1.
\end{equation*}
Set
\begin{equation*}
B_0:=q+r=r+2s+k+1.
\end{equation*}
The middle exponents of \(\gamma\) and \(\mu\) are both \(B_0\).  Introduce
\begin{equation*}
\tau=(p-r,B_0,0).
\end{equation*}
Then
\begin{equation*}
P
=
\underbrace{(J_\lambda-J_\tau)}_{E}
+
\underbrace{(J_\tau+J_\mu-2J_\gamma)}_{H}.
\end{equation*}
The group \(H\) has common labeled spectator exponent \(B_0\).

For one ordered physical pair \(X\ge Y\), the first group has profile
\begin{equation*}
e(t)
=
t^q(t^r-1)(t^{d-r}-1).
\end{equation*}
Since \(r\ge0\), \(e(t)\ge0\) for \(t\ge1\).  On the present face,
\begin{equation*}
d-r=2r+2s+k+\nu\ge r,
\end{equation*}
so Lemma~\ref{lem:multiplicative-kernel} gives
\begin{equation*}
e(xy)\ge x^{q+r}e(y)=x^{B_0}e(y).
\end{equation*}

Let \(h(t)\) be the pair profile of \(H\).  The normalized channel is
\begin{equation*}
F(t,\rho)=e(t)+\rho^{B_0}h(t).
\end{equation*}
Collision positivity gives the two endpoint inequalities
\begin{equation*}
e(t)+h(t)\ge0,
\qquad
e(t)+t^{B_0}h(t)\ge0.
\end{equation*}
Since \(e(t)\ge0\), the second gives
\begin{equation*}
h(t)\ge-t^{-B_0}e(t),
\end{equation*}
and hence
\begin{equation*}
F(t,\rho)
\ge
\left[
1-\left(\frac{\rho}{t}\right)^{B_0}
\right]e(t).
\end{equation*}
Thus every channel with \(\rho\le t\) is nonnegative.

For the middle channel,
\begin{equation*}
F(xy,y)
\ge
(1-x^{-B_0})e(xy),
\end{equation*}
whereas the overshooting channel satisfies
\begin{equation*}
F(y,xy)
\ge
-(x^{B_0}-1)e(y).
\end{equation*}
The two channels have the same lower-variable factor \(C^N\), and
\begin{equation*}
(1-x^{-B_0})e(xy)
\ge
(x^{B_0}-1)e(y)
\end{equation*}
by the multiplicative-kernel inequality.  Therefore their sum is nonnegative; the remaining channel is already nonnegative.  This proves the face \(V=1\).

\subsection{The face \texorpdfstring{\(V=0\)}{V = 0} with \texorpdfstring{\(r\ge1\)}{r >= 1}}

Now assume
\begin{equation*}
V=0,
\qquad
r\ge1.
\end{equation*}
Then
\begin{equation*}
q=2s+k+2.
\end{equation*}
Put
\begin{equation*}
B_0:=q+r-1=r+2s+k+1.
\end{equation*}
Here
\begin{equation*}
\mu_2=\mu_3=B_0,
\qquad
\gamma_2=B_0+1.
\end{equation*}
Define
\begin{equation*}
\tau=(p-r+1,B_0,0),
\qquad
\gamma^+=(\gamma_1+1,B_0,s).
\end{equation*}
Then
\begin{equation*}
P
=
\underbrace{(J_\lambda-J_\tau)}_{E}
+
\underbrace{(J_\tau+J_\mu-2J_{\gamma^+})}_{H}
+
2\underbrace{(J_{\gamma^+}-J_\gamma)}_{D}.
\end{equation*}
The group \(H\) has common spectator exponent \(B_0\), while the group \(D\) has spectator exponent \(s\).

The first positive profile is
\begin{equation*}
e(t)
=
t^q(t^{r-1}-1)(t^{d-r+1}-1).
\end{equation*}
Since
\begin{equation*}
d-r+1=2r+2s+k+\nu\ge r-1,
\end{equation*}
Lemma~\ref{lem:multiplicative-kernel} gives
\begin{equation*}
e(xy)\ge x^{q+r-1}e(y)=x^{B_0}e(y).
\end{equation*}

The edge \(D\) has profile
\begin{equation*}
d_+(t)=t^{B_0}(t-1)(t^m-1),
\end{equation*}
where
\begin{equation*}
m=\gamma_1-B_0=r+s+k+\nu\ge1.
\end{equation*}
Hence
\begin{equation*}
d_+(xy)\ge x^{B_0+1}d_+(y)\ge x^{B_0}d_+(y).
\end{equation*}

If \(h(t)\) denotes the pair profile of \(H\), the full normalized channel is
\begin{equation*}
F(t,\rho)
=
e(t)+\rho^{B_0}h(t)+2\rho^s d_+(t).
\end{equation*}
Collision positivity at the endpoint \(\rho=t\) gives
\begin{equation*}
h(t)
\ge
-t^{-B_0}e(t)-2t^{s-B_0}d_+(t).
\end{equation*}
Therefore
\begin{equation*}
\begin{aligned}
F(t,\rho)\ge{}&
\left[
1-\left(\frac{\rho}{t}\right)^{B_0}
\right]e(t)\\
&+
2\rho^s
\left[
1-\left(\frac{\rho}{t}\right)^{B_0-s}
\right]d_+(t).
\end{aligned}
\end{equation*}
Since \(B_0>s\), this is nonnegative whenever \(\rho\le t\).

For the middle channel the displayed lower bound is
\begin{equation*}
\begin{aligned}
F(xy,y)\ge{}&
(1-x^{-B_0})e(xy)\\
&+
2y^s(1-x^{s-B_0})d_+(xy),
\end{aligned}
\end{equation*}
whereas the overshooting channel satisfies
\begin{equation*}
\begin{aligned}
F(y,xy)\ge{}&
-(x^{B_0}-1)e(y)\\
&-
2y^s(x^{B_0}-x^s)d_+(y).
\end{aligned}
\end{equation*}
The first deficit is paid by
\begin{equation*}
e(xy)\ge x^{B_0}e(y),
\end{equation*}
and the second by
\begin{equation*}
d_+(xy)\ge x^{B_0}d_+(y).
\end{equation*}
Thus the two channels sum to a nonnegative quantity, and the third channel is nonnegative separately.  This proves \(V=0,r\ge1\).

\subsection{The corner \texorpdfstring{\(V=0,r=0\)}{V = 0, r = 0}}

The preceding decomposition uses \(r-1\) and therefore does not apply at \(r=0\).  In this final corner,
\begin{equation*}
q=2s+k+2,
\end{equation*}
and
\begin{equation*}
\mu=(q+\nu-1,q-1,q-1).
\end{equation*}
Introduce
\begin{equation*}
\theta=(q+\nu-1,q,q-2).
\end{equation*}
Then
\begin{equation*}
P
=
\underbrace{(J_\lambda+J_\theta-2J_\gamma)}_{C_0}
-
\underbrace{(J_\theta-J_\mu)}_{F_0}.
\end{equation*}
The group \(C_0\) has common labeled spectator exponent \(q\).  The second group is the terminal balancing edge
\begin{equation*}
(q,q-2)\longrightarrow(q-1,q-1)
\end{equation*}
with spectator exponent
\begin{equation*}
L=q+\nu-1.
\end{equation*}
Its normalized pair profile is
\begin{equation*}
f(t)=t^{q-2}(t-1)^2.
\end{equation*}

Let \(c_0(t)\) be the pair profile of the common-\(q\) group and put
\begin{equation*}
h=L-q=\nu-1.
\end{equation*}
Then
\begin{equation*}
F(t,\rho)
=
\rho^q
\left[
c_0(t)-\rho^h f(t)
\right].
\end{equation*}
The two collision endpoints give
\begin{equation*}
c_0(t)\ge f(t),
\qquad
c_0(t)\ge t^h f(t).
\end{equation*}

If \(\nu=1\), then \(h=0\), so every channel is nonnegative immediately.

Assume \(\nu\ge2\).  Then \(h>0\), and
\begin{equation*}
F(t,\rho)
\ge
\rho^q(t^h-\rho^h)f(t).
\end{equation*}
Only the overshooting channel can be negative.  The compensation reduces to
\begin{equation*}
f(xy)\ge x^qf(y),
\end{equation*}
which follows from
\begin{equation*}
\frac{f(xy)}{f(y)}
=
x^{q-2}
\left(
\frac{xy-1}{y-1}
\right)^2
\ge x^q.
\end{equation*}

Finally suppose \(\nu=0\).  Then \(h=-1\), and the first collision bound is the stronger one.  Every channel with \(\rho\ge1\) is nonnegative.  The only possible loss occurs in the channel \((A,B)\mid C\), and it is paid by the adjacent channel \((A,C)\mid B\).  Since the common degree is
\begin{equation*}
N=3q-3,
\end{equation*}
the required comparison is exactly
\begin{equation*}
f(xy)\ge y^{q-2}f(x).
\end{equation*}
But
\begin{equation*}
\frac{f(xy)}{f(x)}
=
y^{q-2}
\left(
\frac{xy-1}{x-1}
\right)^2
\ge y^{q-2}.
\end{equation*}
Thus the last corner is also nonnegative.

\begin{theorem}[Low-positive contribution seam theorem for positive \(r\)]
\label{thm:seam-v01-positive-r}
Assume
\begin{equation*}
a>\alpha,\qquad
c>\delta,\qquad
\chi=1,\qquad
V\in\{0,1\},\qquad
r\ge0.
\end{equation*}
If
\begin{equation*}
P(t,1,1)\ge0
\qquad(t>0),
\end{equation*}
then
\begin{equation*}
P(x,y,z)\ge0
\qquad(x,y,z>0).
\end{equation*}
\end{theorem}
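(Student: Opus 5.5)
The theorem collects the three computations just carried out in this section, so the plan is to check that they cover every case. I would first reduce to the two terminal faces. In the double-exterior region with \(\chi=1\), put \(k=a-\alpha>0\) and \(u=c-\delta>0\). The identity \(u=r+k+1-V\) gives \(c=a\) when \(V=1\) and \(c=a+1\) when \(V=0\). It also gives the explicit parametrization of \(\lambda,\gamma,\mu\) in terms of \(r,s,k,\nu\) displayed at the start of the section. The hypothesis \(V\in\{0,1\}\) together with \(r\ge0\) therefore splits into exactly three subcases: the face \(V=1\), the face \(V=0\) with \(r\ge1\), and the corner \(V=0,\ r=0\).

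For the face \(V=1\), I would use the decomposition \(P=(J_\lambda-J_\tau)+(J_\tau+J_\mu-2J_\gamma)\) with \(\tau=(p-r,B_0,0)\). This is also the case \(a=c\), so Lemma~\ref{lem:common-middle-lifting} applies directly and gives an independent check. The face \(V=0\) with \(r\ge1\) uses the three-group decomposition \(E+H+2D\) built from \(\gamma^+\). The corner uses \(\theta=(q+\nu-1,q,q-2)\) and splits further into \(\nu=0\), \(\nu=1\), and \(\nu\ge2\).

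The argument in each subcase has the same three steps.
\begin{enumerate}
\item Write each group as a sum over the three spectator channels.
\item Apply collision positivity at the two endpoints \(\rho=1\) and \(\rho=t\), which are the points \((t,1,1)\) and \((t,t,1)\) in channel coordinates. This gives a lower bound for the unknown common-spectator profile \(h\) (respectively \(c_0\)) in terms of the nonnegative profiles \(e\) and \(d_+\) (respectively \(f\)).
\item Order the variables \(A\ge B\ge C\), so that only the channel \((B,C)\mid A\) can have \(\rho>t\). Then pay its possible deficit with the middle channel \((A,C)\mid B\), using the multiplicative-kernel Lemma~\ref{lem:multiplicative-kernel} together with the two direct estimates \(d_+(xy)\ge x^{B_0}d_+(y)\) and \(f(xy)\ge x^qf(y)\).
\end{enumerate}
In the corner with \(\nu=0\), the roles change: the losing channel is \((A,B)\mid C\), and it is paid by \((A,C)\mid B\) via \(f(xy)\ge y^{q-2}f(x)\).

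The step I expect to be the main obstacle is checking the side conditions that make the kernel comparisons legitimate.
\begin{itemize}
\item Lemma~\ref{lem:multiplicative-kernel} needs \(d-r\ge r\) on the face \(V=1\) and \(d-r+1\ge r-1\) on the face \(V=0\). Both follow from the explicit parametrization, since \(d-r=2r+2s+k+\nu\) on \(V=1\) and \(d-r+1=2r+2s+k+\nu\) on \(V=0\).
\item On the face \(V=0\) one needs \(B_0>s\) and \(m=\gamma_1-B_0\ge1\).
\item In the corner one needs \(q\ge2\), which holds because \(q=2s+k+2\).
\item In every subcase, the two channels being compared must carry the same outer factor \(C^N\), or \(C^N\) times the appropriate power of \(y\).
\end{itemize}
Once these are confirmed, the three subcases are exhaustive, and in each one the sum of the three channels is nonnegative on \((0,\infty)^3\). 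This proves the theorem.
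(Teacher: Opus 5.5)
Your proposal is correct and follows the paper's own proof. You use the same split via \(c=a+1-V\) into the face \(V=1\), the face \(V=0\) with \(r\ge1\), and the corner \(V=0,\ r=0\) (subdivided by \(\nu\)), with the same auxiliary vectors \(\tau,\gamma^+,\theta\), the same collision endpoints \(\rho=1,\ \rho=t\), and the same payment of the single losing channel by \((A,C)\mid B\) through Lemma~\ref{lem:multiplicative-kernel} and the direct ratio estimates for \(d_+\) and \(f\). Your side remark that the face \(V=1\) is exactly \(a=c\), so Lemma~\ref{lem:common-middle-lifting} also covers it, is valid; the paper uses the same observation in Lemma~\ref{lem:reciprocal-one-sided-seam}.
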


The three cases above exhaust the seam cases \(V\in\{0,1\}\) with nonnegative \(r\).  Together with Section~\ref{sec:crossing-seam}, this supplies all positive-\(r\) seam bases needed by the reciprocal maximal-depth argument below.

\section{Low-Support Exterior Completion by Reciprocal Maximal-Depth Descent}
\label{sec:one-sided-completion}

The preceding sections have proved the midpoint region, crossing support, the one-sided seam, the genuine double-exterior seam, and the seam cases \(V\in\{0,1\}\) with nonnegative \(r\).  We now show that no separate contact, strict-containment, neutral-boundary, or legality-cone energy calculation is needed.  All remaining exterior states in the boundary configurations are forced by reciprocal complement and the finite predecessor lattice.

\subsection{A common-middle lifting lemma}

The maximal-depth argument uses only the common-middle configuration \(a=c\).  Lemma~\ref{lem:common-middle-lifting} proves this case directly, so no separate transfer-configuration terminology or chamber involution is needed.

\subsection{The only new terminal seam base}

\begin{lemma}[Negative-offset terminal seam]
\label{lem:negative-terminal-seam}
Assume
\begin{equation*}
a>\alpha,
\qquad c>\delta,
\qquad \chi=1,
\qquad V=0,
\qquad r<0,
\qquad \nu\in\{0,1\}.
\end{equation*}
If the collision restriction is nonnegative, then \(P\ge0\) on the positive orthant.
\end{lemma}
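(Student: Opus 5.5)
The plan is to follow the two direct channel arguments already in the paper. The first is the negative-$r$ half of Lemma~\ref{lem:common-middle-lifting}. The second is the corner $V=0$, $r=0$ of Section~\ref{sec:seam-v01}. The proof should merge them into one three-channel estimate.

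\emph{Explicit vectors.} Write $m:=-r>0$ and $B:=q-m=\gamma_2$. From $\chi=1$ we get $\kappa=q+r-1=B-1$, and $V=0$ gives $\mu=(B-1+\nu,\,B-1,\,B-1)$. Also $\gamma=(p+m-s,\,B,\,s)$ and $\lambda=(p,q,0)$, with legality giving $B\ge s\ge m$. Introduce
\begin{equation*}
\tau=(p,B,m),\qquad \theta=(B-1+\nu,\,B,\,B-2),
\end{equation*}
and decompose
\begin{equation*}
P=(J_\lambda-J_\tau)+(J_\tau+J_\theta-2J_\gamma)-(J_\theta-J_\mu).
\end{equation*}
The first group has spectator exponent $p$ and pair profile $e(t)=(t^m-1)(t^B-1)$, which is the profile in \eqref{eq:common-middle-negative-r-profile}. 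The middle group has common spectator exponent $B$ and some pair profile $h(t)$. The last group is the balancing edge $(B,B-2)\to(B-1,B-1)$, with spectator exponent $L=B-1+\nu$ and profile $f(t)=t^{B-2}(t-1)^2$. One should check that $\theta$ is a legal partition; this needs $B\ge2$, and if $B\le1$ the edge term is absent or degenerate and can be treated trivially. The normalized channel is then
\begin{equation*}
F(t,\rho)=\rho^p e(t)+\rho^B h(t)-\rho^{L}f(t).
\end{equation*}

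\emph{Case $\nu=1$.} Here $L=B$, so the edge term is absorbed into $h$, giving $\tilde h=h-f$. The channel is then exactly \eqref{eq:common-middle-negative-r-channel}. The collision endpoints $\rho=1$ and $\rho=t$ give $\tilde h\ge -e$, since $p>B$ makes the $\rho=1$ bound the stronger one. This yields \eqref{eq:common-middle-negative-r-bound}, and the argument of Lemma~\ref{lem:common-middle-lifting} goes through verbatim. Only the channel $(A,B)\mid C$, i.e.\ $\rho=1/y$, can lose. Its loss is paid by the channel $(A,C)\mid B$ through the estimate $e(xy)\ge y^m e(x)$.

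\emph{Case $\nu=0$.} Here $L=B-1$. I would use only the $\rho=1$ endpoint bound $h\ge f-e$, which should be the stronger endpoint for $t\ge1$. This gives
\begin{equation*}
F(t,\rho)\ge \rho^B(\rho^{p-B}-1)e(t)+\rho^{B-1}(\rho-1)f(t).
\end{equation*}
Both terms are nonnegative when $\rho\ge1$, so the channels $(xy,y)$ and $(y,xy)$ are fine on their own. Both deficits therefore land in the single channel $\rho=1/y$, with $t=x$.

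The main obstacle is to show that the surplus of the middle channel $(t,\rho)=(xy,y)$ pays both deficits at once. After restoring the common factor $C^N$, the comparison splits term by term. The $e$-deficit is covered by $e(xy)\ge y^m e(x)$, exactly as in Lemma~\ref{lem:common-middle-lifting}. The $f$-deficit is covered by $f(xy)\ge y^{B-2}f(x)$, the computation from the corner $V=0$, $r=0$.

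The delicate point is the bookkeeping of powers of $y$ from the spectator factors. I would first write both channel contributions with explicit powers of $C$ and $y$. Then I would check that the exponent gap on each term is at most $m$ (respectively $B-2$). If a power of $y$ turns out to be missing, I would fall back on also using the $\rho=t$ endpoint inequality for the middle channel.
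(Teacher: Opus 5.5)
Your proposal is correct and is essentially the paper's proof. It uses the same auxiliary vectors $\tau=(p,B,m)$ and $\theta=(B+\nu-1,B,B-2)$, the same three-group decomposition, and only the $\rho=1$ collision bound; the middle channel $(xy,y)$ then pays the deficit of $(x,1/y)$ via $e(xy)\ge y^m e(x)$ and $f(xy)\ge y^{B-2}f(x)$. The bookkeeping you flagged closes without any fallback: the $f$-deficit carries $y^{p+m}$ against $y^{B-1}$, and $\chi=1$, $V=0$ give $p+m-B+1=\nu+B-2=B-2$ when $\nu=0$. Moreover $B=\kappa+1\ge s+1\ge m+1\ge 2$ by legality, so the degenerate case $B\le 1$ never arises.
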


\begin{proof}
Write
\begin{equation*}
r=-m<0,
\qquad
k=a-\alpha>0,
\qquad
B:=q-m.
\end{equation*}
The identities \(\chi=1\) and \(V=0\) give
\begin{equation*}
q=2s+k+2,
\qquad
p=-3m+4s+2k+\nu+1.
\end{equation*}
Legality gives \(s\ge m\) and \(k\ge m\), hence \(B\ge2\) and \(p\ge B\).  The three exponent vectors are
\begin{equation*}
\lambda=(p,q,0),
\end{equation*}
\begin{equation*}
\gamma=(-2m+3s+2k+\nu+1,\ B,\ s),
\end{equation*}
and
\begin{equation*}
\mu=(B+\nu-1,\ B-1,\ B-1).
\end{equation*}
Introduce
\begin{equation*}
\tau=(p,B,m),
\qquad
\theta=(B+\nu-1,B,B-2).
\end{equation*}
Since orbit sums are symmetric in the exponent labels, no ordering convention for \(\theta\) is needed.  We have the exact decomposition
\begin{equation}\label{eq:terminal-seam-decomposition}
P=(J_\lambda-J_\tau)
 +(J_\tau+J_\theta-2J_\gamma)
 -(J_\theta-J_\mu).
\end{equation}

Fix one pair channel, put \(t=X/Y\) and \(\rho=Z/Y\), and divide by the common positive factor \(Y^N\), where \(N\) is the total degree.  The three groups in \eqref{eq:terminal-seam-decomposition} have respectively the profiles
\begin{equation*}
e(t)=(t^B-1)(t^m-1),
\qquad
h(t),
\qquad
f(t)=t^{B-2}(t-1)^2,
\end{equation*}
with spectator exponents \(p,B,B+\nu-1\).  Thus the normalized channel is
\begin{equation}\label{eq:terminal-seam-channel}
\Phi(t,\rho)
 =\rho^p e(t)+\rho^B h(t)-\rho^{B+\nu-1}f(t).
\end{equation}
Because \(e(1)=h(1)=f(1)=0\), collision positivity on \((t,1,1)\) gives the single channel inequality
\begin{equation}\label{eq:terminal-seam-collision}
e(t)+h(t)-f(t)\ge0.
\end{equation}

First suppose \(\nu=1\).  Put \(a_0=p-B\ge0\) and \(g=h-f\).  From \eqref{eq:terminal-seam-collision}, \(g\ge-e\), hence
\begin{equation*}
\Phi(t,\rho)
 =\rho^B(\rho^{a_0}e+g)
 \ge \rho^B(\rho^{a_0}-1)e.
\end{equation*}
Thus every channel with \(\rho\ge1\) is nonnegative.

Order the physical variables as \(A\ge B_0\ge C>0\) and put
\begin{equation*}
x=A/B_0,
\qquad y=B_0/C.
\end{equation*}
The three normalized channels have
\begin{equation*}
(t,\rho)=(x,y^{-1}),\qquad(xy,y),\qquad(y,xy).
\end{equation*}
Only the first can be negative.  After restoring the common \(C^N\) factor, its deficit is at most
\begin{equation*}
y^q(y^{a_0}-1)e(x),
\end{equation*}
whereas the middle channel contributes at least
\begin{equation*}
y^B(y^{a_0}-1)e(xy).
\end{equation*}
Since \(q=B+m\) and
\begin{equation}\label{eq:e-multiplicative}
e(xy)\ge y^m e(x)
\end{equation}
for \(x,y\ge1\), the middle channel pays the entire deficit.  The third channel is nonnegative.

Now suppose \(\nu=0\).  From \eqref{eq:terminal-seam-collision}, \(h\ge f-e\), so
\begin{equation*}
\Phi(t,\rho)
\ge
\rho^{B-1}
\bigl[(\rho^{a_0+1}-\rho)e(t)+(\rho-1)f(t)\bigr].
\end{equation*}
Again all channels with \(\rho\ge1\) are nonnegative.  The first channel loses at most
\begin{equation*}
y^q(y^{a_0}-1)e(x)
+y^{p+m}(y-1)f(x),
\end{equation*}
while the middle channel contributes at least
\begin{equation*}
y^B(y^{a_0}-1)e(xy)
+y^{B-1}(y-1)f(xy).
\end{equation*}
The first deficit is paid by \eqref{eq:e-multiplicative}.  For the second, the parameter identities give
\begin{equation*}
p+m-B+1=B-2,
\end{equation*}
and therefore
\begin{equation*}
\frac{f(xy)}{f(x)}
 =y^{B-2}
 \left(\frac{xy-1}{x-1}\right)^2
 \ge y^{B-2}.
\end{equation*}
Hence
\begin{equation*}
y^{B-1}f(xy)\ge y^{p+m}f(x),
\end{equation*}
which pays the second deficit.  Summing the three pair channels proves \(P\ge0\).
\end{proof}

\subsection{Reciprocal depth and predecessor geometry}

For every exterior state define
\begin{equation*}
D:=a+c=\beta+\kappa-r-2s.
\end{equation*}
With the upper endpoint fixed, only finitely many legal middle and lower partitions exist, so \(D\) is bounded above.  Every ordinary predecessor strictly increases it:
\begin{equation}\label{eq:D-predecessors}
\begin{aligned}
D(r-1,s;\beta,\kappa)&=D+1,\\
D(r,s-1;\beta,\kappa)&=D+2,\\
D(r,s;\beta,\kappa+1)&=D+1.
\end{aligned}
\end{equation}

Under reciprocal complement,
\begin{equation}\label{eq:residual-reciprocal-dictionary}
r^\vee=-r,
\qquad
V^\vee=\nu,
\qquad
\nu^\vee=V,
\qquad
\chi^\vee=-\chi+V+\nu.
\end{equation}
Write a state as \(X=(r,s;\beta,\kappa)\), let \(Y=X^\vee\), and denote the three predecessors of \(Y\) by \(Y_{r^-},Y_{s^-},Y_{\kappa^+}\).  Direct substitution gives
\begin{equation}\label{eq:reciprocal-predecessors}
\begin{aligned}
(Y_{r^-})^\vee&=(r+1,s-1;\beta,\kappa),\\
(Y_{s^-})^\vee&=(r,s-1;\beta,\kappa),\\
(Y_{\kappa^+})^\vee&=(r,s;\beta+1,\kappa).
\end{aligned}
\end{equation}
The middle state is exactly the \(s\)-predecessor of \(X\), whereas the other two mapped-back states satisfy
\begin{equation}\label{eq:D-reciprocal-predecessors}
D((Y_{r^-})^\vee)=D(X)+1,
\qquad
D((Y_{\kappa^+})^\vee)=D(X)+1.
\end{equation}
We shall also use the elementary identity
\begin{equation}\label{eq:a-c-support}
a-c=-\chi+V.
\end{equation}

\subsection{The global residual theorem}

Call a state \emph{residual} if
\begin{equation}\label{eq:residual-class-simplified}
a>\alpha,
\qquad c\ge0,
\end{equation}
and either
\begin{equation*}
\chi\le0,
\end{equation*}
or
\begin{equation*}
c>\delta,
\qquad \chi=1,
\qquad V\in\{0,1\},
\qquad r<0.
\end{equation*}
Thus every contact or contained state in the preferred exterior orientation is residual, irrespective of the sign of \(r\); the second alternative is precisely the seam cases \(V\in\{0,1\}\) with negative \(r\).

\begin{proposition}[Completion of the remaining exterior cases]
\label{prop:partii-negative-r}
Every collision-positive residual state is globally nonnegative.
\end{proposition}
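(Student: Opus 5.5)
The plan is a minimal-counterexample argument by maximal reciprocal depth \(D=a+c\), as the section title announces. Suppose the proposition fails. Among all collision-positive residual states that are not globally nonnegative (with \(\lambda=(p,q,0)\) and the degree fixed, so the legal lattice is finite), choose one, \(X=(r,s;\beta,\kappa)\), with \(D(X)\) maximal. By \eqref{eq:D-predecessors} every ordinary predecessor of \(X\) has strictly larger depth. Theorem~\ref{thm:minimal-state-reduction} and the telescoping inequalities let us replace \(X\) by a minimal collision-positive state below it, provided that state is still a counterexample. So I would first argue that \(X\) may be taken minimal collision-positive. If some predecessor \(X'\) is collision-positive, then \(P_X\ge P_{X'}\). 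If \(X'\) is residual, maximality of depth makes \(P_{X'}\ge0\). If \(X'\) is not residual, it lies in the midpoint region (Lemma~\ref{lem:phase-decomposition}), or in crossing or seam support (Corollary~\ref{thm:double-cross-seam}, Theorem~\ref{thm:genuine-seam-one-sided}, Lemma~\ref{lem:reciprocal-one-sided-seam}, Theorem~\ref{thm:seam-v01-positive-r}), or in the reciprocal region. These earlier results apply only to minimal states, so I would feed them by a secondary descent from \(X'\); every state on that descent has depth \(>D(X)\). Either way \(P_X\ge0\), which is a contradiction. Hence \(X\) is minimal collision-positive: all three legal predecessors fail collision positivity.

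Next I would pass to the reciprocal \(Y=X^\vee\), using Lemma~\ref{lem:reciprocal} and \eqref{eq:residual-reciprocal-dictionary}. Since \(a>\alpha\) and \(a-c=-\chi+V\) with \(\chi\le1\), we get \(c\ge a-V\) or thereabouts. I would check that \(Y\) lies in the orientation \(c^\vee=a>\alpha=\delta^\vee\). Also \(\chi^\vee=-\chi+V+\nu\ge V+\nu-1\), so for \(\chi\le0\) the reciprocal support parameter is typically \(\ge V+\nu\). The aim is to show \(Y\) falls in a region already settled: crossing support, a seam with \(V^\vee=\nu\ge2\), or \(r^\vee=-r>0\) seams. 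In the negative-\(r\) seam alternative, \(r^\vee>0\) sends \(Y\) into the positive-\(r\) seam results, except when \(\nu\in\{0,1\}\) with \(V=0\), which Lemma~\ref{lem:negative-terminal-seam} handles directly. The case \(V=1\), \(a=c\), is handled by Lemma~\ref{lem:common-middle-lifting}.

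The key difficulty is that \(Y\) need not be minimal, since the proof must not assume reciprocity preserves minimality. Here \eqref{eq:reciprocal-predecessors} and \eqref{eq:D-reciprocal-predecessors} enter. Take a collision-positive predecessor of \(Y\) and map it back. The \(s\)-predecessor maps back to the \(s\)-predecessor of \(X\), which is not collision-positive by minimality. The other two map back to states of depth \(D(X)+1\), which are globally nonnegative by maximality: either they are residual, or they are covered by the non-residual results through the same descent as before. Their reciprocals are then globally nonnegative, and so \(P_Y\) dominates something nonnegative. If \(Y\) has no collision-positive predecessor, \(Y\) is minimal, and the settled cases apply to \(Y\) and hence to \(X\).

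The main obstacle is the bookkeeping. I must verify that in every configuration with \(\chi\le0\) the reciprocal \(Y\), when minimal, lands in a covered case and never back in the residual class at the same depth. I would check this first on contact \(\chi=0\) with small \(V,\nu\), using \(\chi^\vee=V+\nu\), where the degenerate subcases \(V+\nu\le1\) reduce to common-middle or terminal seam lemmas.
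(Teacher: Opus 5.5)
Your proposal is correct and follows the paper's proof: a maximal-depth minimal residual counterexample $X$, passage to $Y=X^\vee$, exclusion of $Y_{s^-}$ by minimality of $X$, the depth increase to $D(X)+1$ for the other two mapped-back predecessors followed by Minimal-State Reduction, and, when $Y$ is minimal, a case check via $\chi^\vee=-\chi+V+\nu$ and $a-c=-\chi+V$ (common-middle lifting when $a=c$, the terminal seam lemmas when $\chi^\vee=1$, crossing support when $\chi^\vee\ge2$). The only difference is the order of selection: you maximize $D$ over all residual counterexamples and then derive minimality, which relies on predecessor moves preserving $a>\alpha$ (you should state this explicitly, so that non-residual descendants never land in the reciprocal region). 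The paper instead first descends to a minimal counterexample, using that descent preserves residuality, and then maximizes $D$; both orderings work.
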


\begin{proof}
Suppose that a residual collision-positive counterexample exists.  By Theorem~\ref{thm:minimal-state-reduction}, descend to a minimal collision-positive state.  The predecessor formulas show that the condition \(a>\alpha\) is preserved.  A state with \(\chi\le0\) stays in \(\chi\le0\); a negative-\(r\) state in a seam case with \(V\in\{0,1\}\) either stays on that seam or enters \(\chi\le0\).  Hence a counterexample yields a minimal residual counterexample.

Among all minimal residual counterexamples with the same upper endpoint, choose \(X\) with maximal \(D(X)\).  Put \(Y=X^\vee\).

First suppose that \(X\) lies in the contained/contact branch \(\chi_X\le0\).  Write
\begin{equation*}
e:=-\chi_X\ge0,
\qquad
S:=e+V_X+\nu_X=\chi_Y.
\end{equation*}
If \(S=0\), then \(e=V_X=\nu_X=0\).  By \eqref{eq:a-c-support}, \(a_X=c_X\), while \(V_X=\nu_X=0\) makes the lower endpoint completely balanced.  Thus the lower leg is a single root-\(13\) transfer and Lemma~\ref{lem:common-middle-lifting} applies directly to \(X\).

Assume \(S\ge1\).  If \(Y\) is minimal and \(S\ge2\), Theorem~\ref{thm:crossing-support} applies.  It remains to discuss the minimal case \(S=1\).  If \(\nu_X=1\), then \(e=V_X=0\), so \eqref{eq:a-c-support} again gives \(a_X=c_X\), and Lemma~\ref{lem:common-middle-lifting} applies.  Hence \(\nu_X=0\).  The reciprocal state satisfies
\begin{equation*}
\chi_Y=1,
\qquad
V_Y=0,
\qquad
\nu_Y=V_X\in\{0,1\}.
\end{equation*}
If \(Y\) is reciprocal one-sided, Lemma~\ref{lem:reciprocal-one-sided-seam} applies.  If \(Y\) is double exterior and \(r_Y\ge0\), apply Theorem~\ref{thm:seam-v01-positive-r}.  If \(Y\) is double exterior and \(r_Y<0\), apply Lemma~\ref{lem:negative-terminal-seam}.  Thus a minimal reciprocal state never yields a counterexample.

Now suppose that \(X\) lies on the seam cases \(V\in\{0,1\}\) with negative \(r\).  If \(V_X=0\) and \(\nu_X\in\{0,1\}\), Lemma~\ref{lem:negative-terminal-seam} applies directly.  If \(V_X=1\) and \(\nu_X=0\), then \eqref{eq:a-c-support} gives \(a_X=c_X\), so Lemma~\ref{lem:common-middle-lifting} applies.  In the remaining cases, if \(Y\) is minimal then \eqref{eq:residual-reciprocal-dictionary} places it either on the genuine seam, on the seam cases \(V\in\{0,1\}\) with nonnegative \(r\), or in crossing support.  These are Corollary~\ref{thm:double-cross-seam}, Theorem~\ref{thm:seam-v01-positive-r}, and Theorem~\ref{thm:crossing-support}, respectively.

It remains only to treat the case in which \(Y\) is not minimal.  Let \(Y'\) be a collision-positive legal predecessor.  The predecessor \(Y_{s^-}\) is impossible, because by \eqref{eq:reciprocal-predecessors} its reciprocal is exactly the legal \(s\)-predecessor of the minimal state \(X\).  Hence \(Y'\) is either \(Y_{r^-}\) or \(Y_{\kappa^+}\).  Put
\begin{equation*}
Z:=(Y')^\vee.
\end{equation*}
Then \eqref{eq:D-reciprocal-predecessors} gives
\begin{equation*}
D(Z)=D(X)+1.
\end{equation*}
Apply Minimal-State Reduction to \(Z\), and let \(W\) be the resulting minimal collision-positive descendant.  By \eqref{eq:D-predecessors},
\begin{equation*}
D(W)\ge D(Z)>D(X).
\end{equation*}
If \(W\) is residual, maximality of \(D(X)\) implies \(P_W\ge0\).  If \(W\) is not residual, then, being minimal and still satisfying \(a>\alpha\), it lies in one of the already proved sectors: crossing support; a one-sided seam; the genuine double-exterior seam; or the seam cases \(V\in\{0,1\}\) with nonnegative \(r\).  Hence again \(P_W\ge0\).

The monotonicity inequalities along the reduction give \(P_Z\ge P_W\ge0\).  Reciprocal complement gives \(P_{Y'}\ge0\); because \(Y'\) is a predecessor of \(Y\), Lemma~\ref{lem:finite-difference-monotonicity} gives \(P_Y\ge P_{Y'}\ge0\); reciprocating once more gives \(P_X\ge0\), a contradiction.
\end{proof}

\begin{theorem}[One-sided exterior theorem]
\label{thm:core}
Assume
\begin{equation*}
a>\alpha,
\qquad c\le\delta.
\end{equation*}
If \(P(t,1,1)\ge0\) for every \(t>0\), then \(P(x,y,z)\ge0\) for all positive \(x,y,z\).
\end{theorem}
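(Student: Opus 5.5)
We assemble Theorem~\ref{thm:core} from the results already proved, by the same Minimal-State Reduction followed by a case split on the support parameter \(\chi\). Let \((r,s,\kappa)\) be the legal state of the given chain in the one-sided exterior region \(a>\alpha\), \(c\le\delta\), which we assume collision-positive. By Theorem~\ref{thm:minimal-state-reduction} we get a minimal collision-positive state \((r_*,s_*,\kappa_*)\) with \(P_{r,s,\kappa}\ge P_{r_*,s_*,\kappa_*}\) pointwise. It therefore suffices to prove \(P_{r_*,s_*,\kappa_*}\ge0\).

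One caveat is that the descent may leave the one-sided region. As noted in the proof of Proposition~\ref{prop:partii-negative-r}, the predecessor moves preserve \(a>\alpha\): \(r\mapsto r-1\) and \(s\mapsto s-1\) decrease \(\alpha\) with \(\beta\) fixed, so \(a=\beta-\alpha\) increases, and \(\kappa\mapsto\kappa+1\) does not touch \(\alpha\) or \(a\). However, \(c-\delta\) may change sign, since \(c=\kappa-s\) grows under the \(s^-\) and \(\kappa^+\) moves. So the minimal state lies either in the one-sided exterior region or in the double-exterior region, but always with \(a>\alpha\). We then split on \(\chi\).

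\emph{Case \(\chi\ge2\).} Theorem~\ref{thm:crossing-support} applies in either region.

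\emph{Case \(\chi=1\).} If the minimal state is one-sided, Theorem~\ref{thm:genuine-seam-one-sided} gives \(P\ge0\) for every \(V\). If it is double exterior and \(V\ge2\), we use Corollary~\ref{thm:double-cross-seam}. If it is double exterior with \(V\in\{0,1\}\) and \(r\ge0\), we use Theorem~\ref{thm:seam-v01-positive-r}. The remaining subcase, double exterior with \(V\in\{0,1\}\) and \(r<0\), is by definition a residual state (\(a>\alpha\), \(c\ge0\), \(c>\delta\), \(\chi=1\)), so Proposition~\ref{prop:partii-negative-r} applies.

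\emph{Case \(\chi\le0\).} The state satisfies \(a>\alpha\) and \(c\ge0\). Here \(c\ge0\) follows from the legality inequality \(\kappa-s\ge0\), since \(c=\kappa-s\). So the state is residual and Proposition~\ref{prop:partii-negative-r} again gives \(P\ge0\).

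The main obstacle is the bookkeeping in the descent step: one must confirm that every minimal descendant falls into one of the listed sectors. Concretely, this means checking that legality always gives \(c\ge0\), so the residual hypothesis \eqref{eq:residual-class-simplified} holds whenever \(\chi\le0\). It also means checking that the double-exterior seam with \(r<0\) is covered by the residual definition and not left uncovered. Both follow from the legal-state inequalities of Definition~\ref{def:legal-state}. Once these are verified, the theorem follows by telescoping back along the predecessor chain.
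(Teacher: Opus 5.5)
Your proposal is correct and follows the paper's own proof: Minimal-State Reduction, then the split into \(\chi\ge2\), \(\chi=1\) and \(\chi\le0\), citing Theorem~\ref{thm:crossing-support}, the seam results, and Proposition~\ref{prop:partii-negative-r} (with \(c=\kappa-s\ge0\) from legality). You also observe that the \(s^-\) and \(\kappa^+\) moves raise \(c-\delta=\kappa-2s\), so the minimal state can drift into the double-exterior region. The paper's short proof cites only Theorem~\ref{thm:genuine-seam-one-sided} at \(\chi=1\), so your routing of double-exterior seams through Corollary~\ref{thm:double-cross-seam}, Theorem~\ref{thm:seam-v01-positive-r} and the residual class, as in Theorem~\ref{thm:negative-r}, closes a bookkeeping point the paper leaves implicit.
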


\begin{proof}
Reduce to a minimal collision-positive state.  Crossing support is Theorem~\ref{thm:crossing-support}; the seam is Theorem~\ref{thm:genuine-seam-one-sided}; and every state with \(\chi\le0\), including the boundary slices \(c=0\) and \(c=\delta\), is Proposition~\ref{prop:partii-negative-r}.  These cases are exhaustive.
\end{proof}

By Lemma~\ref{lem:reciprocal}, the reciprocal one-sided exterior region follows immediately from Theorem~\ref{thm:core}.
\section{Part IV: Negative-\texorpdfstring{\(r\)}{r} Completion}
\label{sec:negative-r}

\begin{theorem}[Completion of Part IV]
\label{thm:negative-r}
Assume
\begin{equation*}
a>\alpha,
\qquad c>\delta.
\end{equation*}
If \(P(t,1,1)\ge0\) for every \(t>0\), then \(P(x,y,z)\ge0\) for all positive \(x,y,z\).
\end{theorem}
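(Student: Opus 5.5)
The plan is to assemble Theorem~\ref{thm:negative-r} from results already proved, mirroring the proof of Theorem~\ref{thm:core}. Starting from a collision-positive legal state in the double-exterior region \(a>\alpha\), \(c>\delta\), I apply Theorem~\ref{thm:minimal-state-reduction}. This gives a minimal collision-positive state \(X\) with \(P\ge P_X\) pointwise, so it suffices to show \(P_X\ge0\). I first check that the descent keeps \(a>\alpha\). The predecessor moves \(r\mapsto r-1\) and \(s\mapsto s-1\) lower \(\alpha\) by one and raise \(a\) by one, while \(\kappa\mapsto\kappa+1\) fixes \(\alpha\) and \(a\). So \(a-\alpha\) never decreases, exactly as already used in Proposition~\ref{prop:partii-negative-r}. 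The sign of \(c-\delta\) may change along the way. This is harmless, because every sector result I invoke at \(X\) holds either in both exterior orientations or within the residual class, which requires only \(a>\alpha\) and \(c\ge0\).

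Next I classify the minimal state \(X\) by \(\chi=q+r-\kappa\).
\begin{itemize}
\item If \(\chi\ge2\), Theorem~\ref{thm:crossing-support} applies; it is phase-independent.
\item If \(\chi=1\) and \(V\ge2\), Corollary~\ref{thm:double-cross-seam} applies in either exterior region.
\item If \(\chi=1\), \(V\in\{0,1\}\), and \(X\) has drifted into the one-sided region \(c\le\delta\), Theorem~\ref{thm:genuine-seam-one-sided} applies.
\item If \(\chi=1\), \(V\in\{0,1\}\), \(c>\delta\), and \(r\ge0\), Theorem~\ref{thm:seam-v01-positive-r} applies.
\item If \(\chi=1\), \(V\in\{0,1\}\), \(c>\delta\), and \(r<0\), the state is residual. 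So is every state with \(\chi\le0\), since there \(c\ge0\) holds automatically by legality (\(c=\kappa-s\ge0\)). In both situations Proposition~\ref{prop:partii-negative-r} gives \(P_X\ge0\).
\end{itemize}
These cases exhaust all minimal states, so \(P\ge P_X\ge0\).

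The main point requiring care is the claim that the list is exhaustive for \emph{minimal} states. In particular, the residual class defined in \eqref{eq:residual-class-simplified} must contain every minimal double-exterior state not covered by the direct sector theorems. The delicate subcase is the seam with \(V\in\{0,1\}\) and \(r<0\) after \(X\) has left double exterior. There I fall back on the one-sided seam theorem, whose low-\(V\) part rests on the companion supplement and on Lemma~\ref{lem:common-middle-lifting} when \(V=1\) forces \(a=c\). I would also verify, via \eqref{eq:a-c-support}, that legality of \(X\) is all these lemmas need, and that no sector hypothesis silently assumes \(c>\delta\) at the original state rather than at \(X\).
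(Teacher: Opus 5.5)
Your proposal is correct and is essentially the paper's proof: minimal-state reduction, then the split \(\chi\ge2\) / \(\chi=1,\ V\ge2\) / \(\chi=1,\ V\in\{0,1\},\ r\ge0\), with every remaining minimal state absorbed into the residual class of Proposition~\ref{prop:partii-negative-r}. Your extra ``drifted into \(c\le\delta\)'' case never occurs, because \(c-\delta=\kappa-2s\) is unchanged under \(r\mapsto r-1\) and increases under \(s\mapsto s-1\) and \(\kappa\mapsto\kappa+1\); hence the minimal state stays double exterior, which the paper uses tacitly when it invokes Theorem~\ref{thm:seam-v01-positive-r}.
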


\begin{proof}
Apply Minimal-State Reduction and let \(W\) be a terminal minimal collision-positive state.  If \(\chi_W\ge2\), use Theorem~\ref{thm:crossing-support}.  If \(\chi_W=1\) and \(V_W\ge2\), use Corollary~\ref{thm:double-cross-seam}.  If \(\chi_W=1\), \(V_W\in\{0,1\}\), and \(r_W\ge0\), use Theorem~\ref{thm:seam-v01-positive-r}.  Every remaining possibility is residual in the sense of \eqref{eq:residual-class-simplified}, so Proposition~\ref{prop:partii-negative-r} applies.  Hence \(P_W\ge0\), and the monotonicity inequalities give \(P\ge P_W\ge0\).
\end{proof}

\section{Completion of the Main Collision Criterion}
\label{sec:global-assembly}

Necessity in Theorem~\ref{thm:main} is immediate from the collision specialization.  For sufficiency, Part I is the midpoint theorem.  Part II is Theorem~\ref{thm:core}, Part III follows by reciprocal complement, and Part IV is Theorem~\ref{thm:negative-r}.  These four regions are exhaustive, so Theorem~\ref{thm:main} follows.

The simplified proof uses only: midpoint AM--GM/Muirhead; the monotonicity inequalities in the three predecessor directions and Minimal-State Reduction; the ordered-minor crossing/seam theorem; the common-middle lifting lemma; the multiplicative-kernel channel argument for \(V\in\{0,1\}\); Lemma~\ref{lem:negative-terminal-seam} for the negative-offset seam; and reciprocal maximal-depth descent.  In particular, the former weighted-tail recursion, Bernstein control arrays, neutral-boundary calculation, eight-ray legality cone, and the 117--780 term exact coefficient certificates are not logical dependencies of the theorem.

\section{Applications and a Refinement of Schur's Inequality}

The collision criterion has several immediate consequences.  First, clearing denominators extends the result to rational exponent vectors.  More importantly, the criterion converts questions about possible strengthenings of a three-variable symmetric inequality into one-variable questions on the collision locus.  We illustrate this principle with the classical Schur inequality.  We first show that the usual Schur chain is rigid if only one of its three exponent vectors is moved in a Muirhead direction that would make the inequality stronger.  We then show that this rigidity does not preclude a genuine strengthening outside that comparison direction: for cubic Schur, a second collision-positive midpoint is incomparable with the classical midpoint.  Thus the collision criterion both identifies a precise rigidity of Schur's inequality and produces a refinement lying outside the classical Muirhead order.

\subsection{Rational exponent vectors}

\begin{corollary}[Rational-exponent extension]\label{cor:rational}
The criterion of Theorem~\ref{thm:main} also holds for nonnegative rational exponent partitions.
\end{corollary}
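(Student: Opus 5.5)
The plan is to reduce to the integer case of Theorem~\ref{thm:main} by clearing denominators, i.e.\ by the substitution \(x_i\mapsto x_i^L\) for a common denominator \(L\).

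Let \(\lambda\succ\gamma\succ\mu\) be equal-degree nonnegative rational partitions with at most three parts. Choose a positive integer \(L\) such that \(L\lambda\), \(L\gamma\), \(L\mu\) all have integer entries.

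\textbf{Step 1: the scaled chain is admissible.} Scaling by \(L>0\) preserves three things:
\begin{itemize}
\item nonnegativity of the entries;
\item weak decrease of each triple;
\item equality of total degrees.
\end{itemize}
It also multiplies every partial-sum difference by \(L\), so \(L\lambda\succ L\gamma\succ L\mu\) is again a strict equal-degree chain. It is now an integer chain, so Theorem~\ref{thm:main} applies to it.

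\textbf{Step 2: the orbit-sum identity.} Because the labeled orbit sum is termwise,
\begin{equation*}
J_{L\eta}(x,y,z)=J_\eta(x^L,y^L,z^L)\qquad(x,y,z>0).
\end{equation*}
Consequently
\begin{equation*}
P_{L\lambda,L\gamma,L\mu}(x,y,z)=P_{\lambda,\gamma,\mu}(x^L,y^L,z^L).
\end{equation*}
The map \(u\mapsto u^L\) is a bijection of \((0,\infty)\) onto itself. Hence \(P_{\lambda,\gamma,\mu}\ge0\) on the positive orthant if and only if \(P_{L\lambda,L\gamma,L\mu}\ge0\) there.

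\textbf{Step 3: the collision restriction.} On the collision locus the same identity reads
\begin{equation*}
P_{L\lambda,L\gamma,L\mu}(t,1,1)=P_{\lambda,\gamma,\mu}(t^L,1,1).
\end{equation*}
So collision positivity for \(t>0\) is likewise equivalent for the two chains.

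\textbf{Step 4: conclusion.} By Steps 2 and 3, both sides of the equivalence in Theorem~\ref{thm:main} are unchanged when passing from the rational chain to the integer chain \(L\lambda\succ L\gamma\succ L\mu\). Since Theorem~\ref{thm:main} holds for the integer chain, it holds for the rational one. Necessity could equally be read off directly by substituting \((t,1,1)\), but it also comes out of this transfer.

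There is no real obstacle here; the only point requiring care is Step 1. One must check that scaling keeps the strict chain inside the class of nonnegative integer partitions with at most three parts to which Theorem~\ref{thm:main} applies. This holds, since scaling by \(L>0\) preserves each defining condition.
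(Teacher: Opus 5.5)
Your proof is correct and follows the paper's argument: clear denominators by a common multiplier $L$, use the termwise identity $J_{L\eta}(x,y,z)=J_\eta(x^L,y^L,z^L)$ (in particular on the collision locus $(t,1,1)$), apply the integer criterion to the scaled chain, and pull back through the bijection $u\mapsto u^L$ of $(0,\infty)$. Your explicit check that scaling keeps the chain strict, equal-degree and inside the integer case is a detail the paper leaves implicit.
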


\begin{proof}
Choose a positive integer \(m\) such that
\begin{equation*}
m\lambda,
\qquad
m\gamma,
\qquad
m\mu
\end{equation*}
are integer partitions.  Collision positivity of the rational chain implies collision positivity of the integer chain because
\begin{equation*}
P_{m\lambda,m\gamma,m\mu}(s,1,1)
=P_{\lambda,\gamma,\mu}(s^m,1,1).
\end{equation*}
Apply the completed integer collision criterion and then substitute
\begin{equation*}
s_1=x^{1/m},
\qquad
s_2=y^{1/m},
\qquad
s_3=z^{1/m}.
\end{equation*}
\end{proof}

\subsection{Low-degree integer examples beyond Schur}

The collision criterion gives many integer inequalities that do not have the classical Schur form
\begin{equation*}
(r+2,0,0)\succ(r+1,1,0)\succ(r,1,1).
\end{equation*}
Three low-degree examples are
\begin{align*}
J_{(9,0,0)}+J_{(3,3,3)}
&\ge2J_{(7,1,1)},\\
J_{(10,0,0)}+J_{(4,3,3)}
&\ge2J_{(8,1,1)},\\
J_{(9,1,0)}+J_{(4,3,3)}
&\ge2J_{(7,2,1)}.
\end{align*}

For the first inequality,
\begin{equation*}
Q_9(t)
=t^9-2t^7+3t^3-4t+2
=(t-1)^2H_9(t),
\end{equation*}
where
\begin{equation*}
H_9(t)=t^7+2t^6+t^5-t^3-2t^2+2>0
\qquad(t>0),
\end{equation*}
as shown in Section~4.4.

For the second inequality,
\begin{equation*}
Q_{10}^{(1)}(t)
=t^{10}-2t^8+t^4+2t^3-4t+2
=(t-1)^2H_{10}(t),
\end{equation*}
with
\begin{equation*}
H_{10}(t)
=t^8+2t^7+t^6-t^4-2t^3-2t^2+2.
\end{equation*}
The Sturm sequence of \(H_{10}\) has
\begin{equation*}
V(0^+)=V(+\infty)=4,
\end{equation*}
so \(H_{10}\) has no positive zero.  Since
\begin{equation*}
H_{10}(1)=1,
\end{equation*}
we have
\begin{equation*}
H_{10}(t)>0
\qquad(t>0).
\end{equation*}

For the third inequality,
\begin{align*}
Q_{10}^{(2)}(t)
&=t^9-2t^7+t^4+2t^3-2t^2-t+1\\
&=(t-1)^2(t+1)^2(t^5-t+1).
\end{align*}
Moreover,
\begin{equation*}
t^5-t+1>0
\qquad(t>0),
\end{equation*}
because
\begin{equation*}
t^5-t+1\ge1
\qquad(t\ge1),
\end{equation*}
and
\begin{equation*}
t^5-t+1>1-t>0
\qquad(0<t<1).
\end{equation*}
The full collision criterion therefore proves all three inequalities.

\subsection{Rigidity of the classical Schur chain}

For \(r\ge1\), set
\begin{equation*}
\eta=(r+2,0,0),
\qquad
\gamma=(r+1,1,0),
\qquad
\mu=(r,1,1).
\end{equation*}
The classical Schur inequality is
\begin{equation*}
J_\eta+J_\mu\ge2J_\gamma.
\end{equation*}

\begin{proposition}[One-vector Muirhead rigidity]\label{prop:schur-rigidity}
Keeping any two of \(\eta,\gamma,\mu\) fixed, the third cannot be moved nontrivially in the Muirhead direction that makes the Schur inequality stronger while preserving a majorization chain.
\end{proposition}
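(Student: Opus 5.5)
The plan is to make the three one-vector moves explicit and rule out each one separately. Stronger means that either $\eta$ is replaced by some $\eta'\prec\eta$, or $\mu$ by some $\mu'\prec\mu$, so that by Theorem~\ref{thm:muirhead} $J_{\eta'}\le J_\eta$ or $J_{\mu'}\le J_\mu$; or else $\gamma$ is replaced by some $\gamma'\succ\gamma$, so that $J_{\gamma'}\ge J_\gamma$. In each case the modified triple must still form a chain $\eta'\succcurlyeq\gamma\succcurlyeq\mu$, or the analogous chain with $\mu'$ or $\gamma'$ in place. The claim is that every nontrivial such move destroys nonnegativity. By the necessity half of Theorem~\ref{thm:main}, it suffices to exhibit some $t>0$ with negative collision function. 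I will compare against the Schur collision function, which is
\begin{equation*}
Q_{\mathrm{Sch}}(t)=t^{r+2}+t^r+2t-2(t^{r+1}+t+1)+2=t^r(t-1)^2 .
\end{equation*}

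First I treat the upper endpoint and the middle, which are combinatorial. Any $\eta'$ with $(r+2,0,0)\succ\eta'\succcurlyeq(r+1,1,0)$ has first part $r+1$, so the remaining two parts sum to $1$ and $\eta'=\gamma$. The inequality then degenerates to $J_\mu\ge J_\gamma$, which contradicts strict Muirhead; concretely, $Q(t)=t^r+2t-t^{r+1}-t-1$ is negative for large $t$. Similarly, any $\gamma'$ with $\eta\succcurlyeq\gamma'\succ\gamma$ is forced to equal $\eta$. The inequality then reads $J_\mu\ge J_\eta$, which is again false. So in these two directions the Schur vectors are covering relations, and the only available moves collapse the chain.

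The real work is the lower endpoint: here $\mu'\prec(r,1,1)$ ranges over many partitions. For such a move the new collision function is
\begin{equation*}
Q'(t)=t^r(t-1)^2+\sum_i t^{\mu'_i}-\sum_i t^{\mu_i},
\end{equation*}
and Lemma~\ref{lem:collision-energy} gives $Q'{}''(1)=2-(\|\mu\|_2^2-\|\mu'\|_2^2)$. A strict integer majorization step decomposes into unit Robin Hood transfers from a larger coordinate $u$ to a smaller one $v$ with $u\ge v+2$. Each such transfer lowers $\|\cdot\|_2^2$ by $2(u-v)-2\ge2$. Hence $Q'{}''(1)\le0$, with equality only when $\mu'$ arises from $\mu$ by a single transfer of gap exactly $2$. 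If $Q'{}''(1)<0$, then $Q'<0$ near $t=1$ and we are done.

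The main obstacle is the equality case, where second-order data are inconclusive. Since $\mu_2=\mu_3=1$, the only transfers with gap $2$ have the form $(r,1,1)\to(r-1,2,1)$ and require $r=3$. For that case I will compute directly. With $\mu=(3,1,1)$ and $\mu'=(2,2,1)$,
\begin{equation*}
Q'(t)=t^3(t-1)^2-t(t-1)^2=t(t-1)^2(t^2-1),
\end{equation*}
which is negative on $(0,1)$. To be safe about the parts count, I will also check whether $\mu'$ could have a zero part. This is impossible, because $\mu'\prec\mu$ forces $\mu'_3\ge\mu_3=1$. I will then double-check, from the description of legal triples, that no other gap-$2$ single transfer is legal. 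With that, every nontrivial stronger move violates the collision condition, and the proposition follows from Theorem~\ref{thm:main}.
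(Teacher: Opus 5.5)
Your argument works only on the integer lattice, and it uses integrality in all three cases. You conclude that $\eta'_1=r+1$ (so $\eta'=\gamma$), that any $\gamma'$ above $\gamma$ must equal $\eta$, and that $\mu'$ is reached by unit Robin Hood transfers, each lowering $\|\cdot\|_2^2$ by at least $2$. The paper, however, proves the proposition for real exponent moves: Schur is stated for real $r$, and the proof treats $\gamma_s=(r+1+s,1-s,0)$ with $0<s\le1$ and $\eta'=(a,b,0)$ with real $b>0$. This reading is also what gives meaning to the contrast with the rational incomparable midpoint $(9/4,1/2,1/4)$ in the following corollary. Over the reals each of your three steps fails:
\begin{itemize}
\item for non-integer $r$ your lattice reasoning has nothing to act on;
\item the moves $\eta'=(r+2-b,b,0)$ with $0<b<1$ and $\gamma_s$ with $0<s<1$ are nontrivial and do not collapse the chain;
\item for $r>1$ and $\mu'=(r-\varepsilon,1+\varepsilon,1)$ the energy drop $\|\mu\|_2^2-\|\mu'\|_2^2=2\varepsilon(r-1)-2\varepsilon^2$ is small, so $Q''(1)>0$. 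Then $t=1$ is a strict local minimum and the second-order test gives nothing. Even for integer $r=2$, the real moves $(2-\varepsilon,1+\varepsilon/2,1+\varepsilon/2)$ exist, although $(2,1,1)$ is minimal among integer partitions.
\end{itemize}

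The missing idea is to look at $t\to0^+$ rather than at $t=1$. Necessity, by substituting $(t,1,1)$, holds for arbitrary real exponents, and the lowest-order term of the collision trace is negative in every case:
\begin{itemize}
\item For $\eta'=(a,b,0)$ with $b>0$ (the third part is forced to vanish by $\eta'\succcurlyeq\gamma$), the constant term is $1-2=-1$.
\item For $\gamma_s$ the trace is $t^{r+2}+t^r+2t-2t^{r+1+s}-2t^{1-s}$. It is dominated by $-2t^{1-s}$ when $0<s<1$ and has constant term $-2$ when $s=1$.
\item For $\mu'=(a,b,c)\prec\mu$ one has $c\ge1$. The trace is $-2t+o(t)$ if $c>1$, and $-t+o(t)$ if $c=1$ and $\mu'\neq\mu$, since then $b>1$.
\end{itemize}
This handles integer and real moves uniformly and makes your $r=3$ borderline computation unnecessary. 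For integer $r$ and integer moves only, your argument is a valid alternative: the energy count is right, and so is $Q(t)=t(t-1)^2(t^2-1)$ for $(3,1,1)\to(2,2,1)$. As written, though, it does not prove the proposition in the generality the paper states and proves it.
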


\begin{proof}
First keep \(\gamma,\mu\) fixed and replace \(\eta\) by a less-majorized partition \(\eta'=(a,b,c)\) satisfying \(\eta'\succ\gamma\).  Since the first two parts of \(\gamma\) already sum to the full degree \(r+2\), majorization forces \(c=0\).  If \(b>0\), the half-collision trace tends to \(1-2=-1\) as \(t\to0^+\), which is impossible under collision positivity.  Hence \(b=0\) and \(\eta'=\eta\).

Next keep \(\eta,\mu\) fixed.  Any strictly more-majorized midpoint has the form
\begin{equation*}
\gamma_s=(r+1+s,1-s,0),
\qquad
0<s\le1.
\end{equation*}
Its half-collision trace is
\begin{equation*}
t^{r+2}+t^r+2t-2t^{r+1+s}-2t^{1-s}.
\end{equation*}
For \(0<s<1\), the negative term \(-2t^{1-s}\) dominates as \(t\to0^+\); for \(s=1\), the constant term is negative.  Thus the midpoint cannot move upward.

Finally keep \(\eta,\gamma\) fixed and replace \(\mu\) by a more balanced partition \(\mu'=(a,b,c)\).  For \(r>1\), majorization gives \(c\ge1\).  If \(c>1\), the collision trace is \(-2t+o(t)\); if \(c=1\) but \(\mu'\ne\mu\), then \(b>1\) and the trace is \(-t+o(t)\).  For \(r=1\), \(\mu=(1,1,1)\) is already completely balanced.  Therefore \(\mu\) cannot move downward.
\end{proof}

\subsection{An incomparable refinement of cubic Schur}

\begin{corollary}\label{cor:sun-schur}
For all \(x,y,z>0\),
\begin{equation*}
J_{(3,0,0)}+J_{(1,1,1)}
\ge
2\max\left\{
J_{(2,1,0)},
J_{(9/4,\,1/2,\,1/4)}
\right\}.
\end{equation*}
Consequently this inequality strictly refines the usual cubic Schur inequality.
\end{corollary}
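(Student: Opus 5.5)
The inequality with the maximum amounts to two separate inequalities, and the plan treats them one at a time. The first, with $J_{(2,1,0)}$ on the right, is the cubic Schur inequality, Theorem~\ref{thm:schur} with $r=1$. So it remains to prove
\begin{equation*}
J_{(3,0,0)}+J_{(1,1,1)}\ge 2J_{(9/4,1/2,1/4)}
\qquad(x,y,z>0),
\end{equation*}
and then to show that the new midpoint is incomparable with $(2,1,0)$.

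\emph{Step 1: the chain is admissible.} The three vectors all have degree $3$. We have $(3,0,0)\succ(9/4,1/2,1/4)$ because $9/4<3$ and $9/4+1/2=11/4<3$. We have $(9/4,1/2,1/4)\succ(1,1,1)$ because $9/4>1$ and $11/4>2$. So Corollary~\ref{cor:rational} applies with denominator $m=4$. The scaled integer chain is
\begin{equation*}
(12,0,0)\succ(9,2,1)\succ(4,4,4).
\end{equation*}

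\emph{Step 2: the collision polynomial.} With $t=s^4$, the collision function is
\begin{equation*}
Q(s)=s^{12}+2+3s^{4}-2s^{9}-2s^{2}-2s.
\end{equation*}
Lemma~\ref{lem:collision-energy} guarantees a double root at $s=1$. The energy is $\tfrac12(144+48-2\cdot 86)=10>0$, so it is consistent. Dividing gives $Q=(s-1)^2\widehat Q(s)$ with $\widehat Q$ of degree $10$. The explicit coefficients come from synthetic division, which I do not carry out here. We then need $\widehat Q(s)\ge0$ for $s>0$.

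\emph{Step 3: positivity of $\widehat Q$.} This is the main obstacle. My first attempt is a Sturm sequence, following the procedure of Section~4.3 as in Section~4.4. The alternative is a direct split:
\begin{itemize}
\item For $s\ge1$, group the terms as $s^{12}-2s^{9}+\dots$ and use AM--GM. For instance, $s^{12}+s^{6}\ge 2s^{9}$, and the surplus in $3s^4-s^6$ can then be compensated using the remaining terms near $s=1$.
\item On $[0,1]$, expand $\widehat Q$ in the Bernstein basis and check that the coefficients are positive, as was done for $H$ in Section~4.4.
\end{itemize}
Sanity checks support positivity: $Q(0)=2$, and $Q$ is dominated by $s^{12}$ at infinity. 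A numerical look near $s\approx0.6$ and $s\approx1.2$ should confirm there is no interior dip before the exact certificate is attempted. Once $Q\ge0$ holds on $(0,\infty)$, Theorem~\ref{thm:main} via Corollary~\ref{cor:rational} gives the global inequality.

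\emph{Step 4: strict refinement.} The midpoints $(2,1,0)$ and $(9/4,1/2,1/4)$ are incomparable: $9/4>2$, but $9/4+1/2=11/4<3$. So neither orbit sum dominates the other by Muirhead. To see that the maximum genuinely improves on Schur, exhibit one point where $J_{(9/4,1/2,1/4)}>J_{(2,1,0)}$. Points with one variable near $0$, such as $(1,1,\varepsilon)$, work: as $\varepsilon\to0^+$ the left-hand orbit sum tends to $2(1+1)=4$, while $J_{(2,1,0)}(1,1,0)=2$. At such points the new bound is strictly larger than Schur's right-hand side, which establishes the refinement.
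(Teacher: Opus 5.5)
Your route is the paper's: cubic Schur for $(2,1,0)$, then Corollary~\ref{cor:rational} with $m=4$, reducing the new inequality to $Q(s)=s^{12}-2s^9+3s^4-2s^2-2s+2$, which has a double root at $s=1$. But Step~3 is the only real content of the corollary once the criterion is invoked, and you leave it undone. You never compute $\widehat Q$, and none of the Sturm, Bernstein, or AM--GM routes is carried out; a numerical look is not a certificate.

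The AM--GM sketch also fails as stated. After $s^{12}+s^6\ge2s^9$, the leftover is $-s^6+3s^4-2s^2-2s+2=(s-1)^2(-s^4-2s^3+2s+2)$. It is negative for all $s\ge1.11$ (about $-0.045$ at $s=1.2$) and tends to $-\infty$, so there is no surplus to spend near $s=1$; the square $(s^6-s^3)^2$ has to be kept.

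Doing the division closes the gap at once:
\begin{equation*}
\widehat Q(s)=s^{10}+2s^9+3s^8+2s^7+s^6-s^4-2s^3+2s+2.
\end{equation*}
For $s\ge1$, absorb the negative terms as $2(s^7-s^3)+(s^6-s^4)$. For $0<s\le1$, drop the top five terms: $\widehat Q(s)\ge2+2s-s^4-2s^3\ge2+2s-3s^3=1+(1-s)(3s^2+3s+1)>0$.

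Step~4 contains an outright error. All three exponents of $(9/4,1/2,1/4)$ are positive, so every monomial of $J_{(9/4,1/2,1/4)}(1,1,\varepsilon)$ carries a positive power of $\varepsilon$:
\begin{equation*}
J_{(9/4,1/2,1/4)}(1,1,\varepsilon)=2\bigl(\varepsilon^{9/4}+\varepsilon^{1/2}+\varepsilon^{1/4}\bigr)\to0,
\qquad
J_{(2,1,0)}(1,1,\varepsilon)=2(\varepsilon^2+\varepsilon+1)\to2.
\end{equation*}
Near a face where one variable is small, the classical midpoint is the larger one, so your witness points go the wrong way. Incomparability in the majorization order only says Muirhead is silent. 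Strictness needs an explicit point, or an appeal to the converse of Muirhead.

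The right witness, as in the paper, is the other end of the collision line. There $J_{(9/4,1/2,1/4)}(t,1,1)=2(t^{9/4}+t^{1/2}+t^{1/4})$ grows like $2t^{9/4}$, while $J_{(2,1,0)}(t,1,1)=2(t^2+t+1)$ grows like $2t^2$. Concretely, at $t=16$ the two values are $1036$ and $546$.
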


\begin{proof}
The first midpoint gives the classical cubic Schur inequality.  Put
\begin{equation*}
\widetilde\gamma
=\left(\frac94,\frac12,\frac14\right).
\end{equation*}
Then
\begin{equation*}
(3,0,0)\succ\widetilde\gamma\succ(1,1,1),
\end{equation*}
while \(\widetilde\gamma\) and \((2,1,0)\) are incomparable because
\begin{equation*}
\frac94>2,
\qquad
\frac94+\frac12<3.
\end{equation*}
By Corollary~\ref{cor:rational}, it remains to verify the collision restriction.  After removing the common factor \(2\), this restriction is
\begin{equation*}
R(t)
=t^3+3t+2-2t^{9/4}-2t^{1/2}-2t^{1/4}.
\end{equation*}
Set \(u=t^{1/4}\).  Then
\begin{equation*}
R(t)=(u-1)^2S(u),
\end{equation*}
where
\begin{equation*}
S(u)
=u^{10}+2u^9+3u^8+2u^7+u^6-u^4-2u^3+2u+2.
\end{equation*}
If \(u\ge1\), then
\begin{equation*}
S(u)
=u^{10}+2u^9+3u^8+2(u^7-u^3)+(u^6-u^4)+2u+2>0.
\end{equation*}
If \(0<u\le1\), then
\begin{align*}
S(u)
&\ge2+2u-u^4-2u^3\\
&\ge2+2u-3u^3\\
&=1+(1-u)(3u^2+3u+1)>0.
\end{align*}
Thus \(R(t)\ge0\) for every \(t>0\), and Corollary~\ref{cor:rational} gives
\begin{equation*}
J_{(3,0,0)}+J_{(1,1,1)}
\ge2J_{(9/4,1/2,1/4)}.
\end{equation*}
Combining this with the classical Schur inequality proves the stated maximum inequality.  The refinement is strict because, along \((t,1,1)\),
\begin{equation*}
J_{(9/4,1/2,1/4)}(t,1,1)\sim2t^{9/4},
\qquad
J_{(2,1,0)}(t,1,1)\sim2t^2
\end{equation*}
as \(t\to\infty\).
\end{proof}

\end{document}